\def\section{\@startsection{section}{1}%
	\z@{.7\linespacing\@plus\linespacing}{.5\linespacing}%
	{\bfseries%\normalfont\scshape
		\centering
}}
\def\@secnumfont{\bfseries}
\newcommand\reallywidehat[1]{%
	\savestack{\tmpbox}{\stretchto{%
			\scaleto{%
				\scalerel*[\widthof{\ensuremath{#1}}]{\kern-.6pt\bigwedge\kern-.6pt}%
				{\rule[-\textheight/2]{1ex}{\textheight}}%WIDTH-LIMITED BIG WEDGE
			}{\textheight}%
		}{0.5ex}}%
	\stackon[1pt]{#1}{\tmpbox}%
}
\definecolor{deepblue}{rgb}{0.0,0.0,0.7}
\numberwithin{equation}{section}
\newtheorem{theorem}{Theorem}[section]
\newaliascnt{lemma}{theorem}
\newtheorem{lemma}[lemma]{Lemma}
\newaliascnt{proposition}{theorem}
\newtheorem{proposition}[proposition]{Proposition}
\newaliascnt{assumption}{theorem}
\newtheorem{assumption}[assumption]{Assumption}
\newaliascnt{corollary}{theorem}
\newaliascnt{definition}{theorem}
\newtheorem{definition}[definition]{Definition}
\newaliascnt{example}{theorem}
\newtheorem{example}[example]{Example}
\newaliascnt{remark}{theorem}
\newaliascnt{hypothesis}{theorem}
\newaliascnt{property}{theorem}
\let\originalleft\left
\let\originalright\right
\renewcommand{\left}{\mathopen{}\mathclose\bgroup\originalleft}
\renewcommand{\right}{\aftergroup\egroup\originalright}
\newcommand{\doublewidetilde}[1]{{%
		\mathpalette\double@widetilde{#1}%
}}
\newcommand{\double@widetilde}[2]{%
	\sbox\z@{$\m@th#1\widetilde{#2}$}%
	\ht\z@=.9\ht\z@
	\widetilde{\box\z@}%
}
\def\w{\textbf{W}^{\varepsilon}_{{\theta}^{\varepsilon}}}
\def\L{\mathrm{L}}
\def\F{\mathrm{F}}
\def\C{\mathrm{C}}
\def\B{\mathrm{B}}
\def\z{\mathbf{z}}
\def\v{\mathbf{v}}
\def\V{\mathbb{V}}
\def\w{\mathbf{w}}
\def\G{\mathbb{G}}
\def\no{\nonumber}
\def\V{\mathbb{V}}
\def\U{\mathrm{U}}
\def\u{\mathbf{u}}
\def\H{\mathbb{H}}
\def\n{\mathbf{n}}
\def\q{\mathbf{q}}
\newcommand{\R}{\mathbb{R}}
\newcommand{\Addresses}{{% additional braces for segregating \footnotesize
		\footnote{
			%	\footnotesize
			
			\noindent \textsuperscript{1}School of Mathematics, Indian Institute of Science Education and Research, Trivandrum (IISER-TVM),
			Maruthamala PO, Vithura, Thiruvananthapuram, Kerala, 695 551, INDIA  \par\nopagebreak \noindent
			\textit{e-mail:} \texttt{sheetal@iisertvm.ac.in}, *Corresponding Author

			\noindent \textsuperscript{2}Institute of Mathematics, Czech Academy of Sciences, Zitna 25, 11567 Praha 1, Czech Republic.  
			\par\nopagebreak \noindent
			\textit{e-mail:} \texttt{perisetti@math.cas.cz, plnmn9@gmail.com}

			%			\medskip\noindent
					{\bf Funding:}  There is no specific funding received from any grant for this work. Both authors thank IISER Thiruvananthapuram for providing facilities and a stimulating atmosphere for conducting the research. \\
% { \
% bf Data Availability:} There is no data associated with the work of this manuscript.

}}}
\begin{document}
	\title{Viscosity solutions of Hamilton-Jacobi-Bellman equations for optimal control problem of local Cahn-Hilliard-Navier-Stokes System
		\Addresses	}
	
	%\author[S. Dharmatti]{Sheetal Dharmatti*}
	%		\address{%
		%		Sheetal Dharmatti, School of Mathematics,
		%		IISER - Thiruvananthapuram,
		%		Maruthamala PO, Vithura,
		%		Thiruvananthapuram - 695551, Kerala, India.}
	%		\email{sheetal@iisertvm.ac.in}
	%\author{P.L.N. Mahendranath}
	%	\address{%
		%	       P.L.N. Mahendranath, School of Mathematics,
		%	       IISER - Thiruvananthapuram,
		%	       Maruthamala PO, Vithura,
		%	       Thiruvananthapuram - 695551, Kerala, India.} 
	%       	\email{plnmn915@iisertvm.ac.in }	
	
	\author[S. Dharmatti and M. Perisetti]
	{Sheetal Dharmatti\textsuperscript{1*} and Mahendranath Perisetti\textsuperscript{2}}

	\begin{abstract}
		In this work, we consider the local Cahn-Hilliard-Navier-Stokes equation with regular potential in a two-dimensional bounded domain.  We formulate a distributed optimal control problem as the minimization of a suitable cost functional subject to the controlled local Cahn-Hilliard-Navier-Stokes system and define the associated value function. We prove the Dynamic Programming Principle satisfied by the value function. Due to the lack of smoothness properties for the value function, we use the method of viscosity solutions to obtain the corresponding solution of the infinite-dimensional Hamilton-Jacobi-Bellman equation. We show that the value function is the unique viscosity solution of the Hamilton-Jacobi-Bellman equation.  The uniqueness of the viscosity solution is established via the comparison principle.
	\end{abstract}
	\keywords{Cahn-Hilliard equation, Navier-Stokes equation, Optimal control, Dynamic programming principle, Hamilton-Jacobi equation, Viscosity solution, Comparison principle}
	\subjclass{35F21, 35K55, 49J20, 49L20, 49L25, 76D05, 76T99, 93C20}
	\maketitle
	\section{Introduction}
	The famous Navier-Stokes equations govern the complex motions of a single-phase fluid and are studied in the literature extensively by physicists, engineers, and mathematicians. For the comprehensive mathematical study of these equations one can refer to \cite{MR776345, MR1855030, MR0304889, MR0609732} and references therein. The mathematical study of binary or multi-phase mixture flows has garnered interest in the last few decades. 
	%The quenching of a system from a disordered phase into an ordered one produces a time-dependent growth process of ordered regions. The evolution of these regions is the subject of phase ordering dynamics, a relevant subject of investigation for a number of physical systems ranging from solid alloys to polymer blends, multi-phase fluids and nematic liquid crystals (see \cite{MR1609626, PhysRevLett.95.224501, PhysRevLett.74.126, MR1003435,PhysRevA.20.595}). 
	J.W. Cahn and J.E. Hilliard were the first to formulate the mathematical equations of this problem and studied the spinodal decomposition of binary alloys (see \cite{CAHN1961795, cahnhilliard1958}). Similar phenomena occur in the phase separation of binary fluids, that is, fluids composed of either two phases of the same chemical species or phases of different compositions. In this case, however, the phenomenology is much more complicated because of the interplay between the phase separation stage and the fluid dynamics. The mathematical analysis of these phenomena is far from being well understood. Different phase field models can be developed by coupling Cahn-Hilliard equations with equations describing the dynamics of the flow. Thus the equations are not just non-linear but are also coupled and hence the mathematical study is challenging as well as difficult. 
	%For instance, the spinodal decomposition under shear consists of a two-stage evolution of a homogeneous initial mixture: a phase separation stage in which some macroscopic pattern appear, then a shear stage in which these patterns organise themselves into parallel layers. 
	
	For the coupled Cahn-Hilliard-Navier-Stokes system, (CHNS system), the chemical interactions between two phases at the interface are governed by the Cahn-Hilliard system, and the Navier-Stokes equations with surface tension terms acting at the interface give the hydrodynamic properties of the mixture. When the two fluids have the same constant density, the temperature differences are negligible and the diffusive interface between the two phases has a small but non-zero thickness, a well-known model is the so-called “model H” (see \cite{hohenberg1977theory}).
	The coupled Cahn-Hilliard-Navier-Stokes system (model H) is described as follows: 
	\begin{align} \label{DPPsys1}
		\varphi_t + \mathbf{u} \cdot \nabla \varphi &= m \Delta \mu,  \quad \mathrm{in} \quad \Omega \times (0,T), \\
		\mu &= - \Delta \varphi + f(\varphi), \quad \mathrm{in}\quad \Omega \times (0,T),\\
		\mathbf{u}_t -\nu \Delta \mathbf{u} + (\mathbf{u} \cdot \nabla ) \mathbf{u} + \nabla \pi & = \mathcal{K}\mu \nabla \varphi + \U \quad \mathrm{in}\quad \Omega \times (0,T),\\
		\mathrm{div}(\mathbf{u}) & = 0, \quad \mathrm{in}\quad \Omega \times (0,T),\\
		\u(0) =\u_0, &\ \varphi(0) = \varphi_0,\quad \mathrm{in}\quad \Omega,\label{DPPsys2}
	\end{align}
	%with initial and boundary conditions
	%\begin{align}
	%\u(0) =\u_0, \ \varphi(0) = \varphi_0,\quad \mathrm{in}\quad \Omega, \\
	%\frac{\partial \varphi}{\partial \mathbf{n}}=\frac{\partial \mu}{\partial \mathbf{n}}=0, \label{DPPsys2} \quad \mathrm{ \ in \ } \partial \Omega \times (0,T).
	%\end{align}
	where $ \Omega \subset \mathbb{R}^2,$ is a bounded subset with smooth boundary $ \partial \Omega $, $ \U $ is an external volume forcing and we have assumed the density to be equal to one. Here, $ \u $ represents the mean velocity field and $ \varphi $ is the order parameter which represents the relative concentration of one of the fluids. The quantities $ \nu,m $ and $ \mathcal{K} $  are viscosity, mobility, and capillary coefficient respectively, which are positive constants.
	For $ \u $, we assume the Dirichlet (no-slip) boundary condition 
	\begin{align}
		\u = 0, \quad \mathrm{on} \ \partial \Omega \times (0,T),
	\end{align}
	and we assume that the boundary conditions for $ \varphi $ are the natural no-flux condition 
	\begin{align}
		\frac{\partial \varphi}{\partial \n} = \frac{\partial \Delta \varphi}{\partial \n}=0, \quad \mathrm{on} \ \partial \Omega \times (0,T), \label{DP21}
	\end{align}
	where $ \n $ is the outward normal to $ \partial \Omega $. 
	Note that \eqref{DP21} implies that
	\begin{align}
		\frac{\partial \mu}{\partial \n} =0, \quad \mathrm{on} \ \partial \Omega \times (0,T), \label{DP22}
	\end{align}
	where $ \mu $ is the chemical potential of the binary mixture. It is given by the first variation of the following Landau-Ginzburg energy functional
	\begin{align} \label{DP82}
		E(\varphi) = \int_\Omega \left( \frac{1}{2}|\nabla \varphi (x)|^2  + F(\varphi (x)) \right) dx,
	\end{align}
	where, $ F(s) = \int_0^s f(\tau) d\tau $ is a suitable double-well potential.   A typical example of potential $ F $ is a logarithmic potential. However, this potential is very often replaced by a polynomial approximation of the regular potential (eg: $ F(s) = s^2(s^2-1) $).
	From \eqref{DPPsys1} and \eqref{DP22}, we deduce the conservation of the average of $\varphi$ denoted by
	\begin{align*}
		\overline{\varphi (t)}  = \frac{1}{|\Omega|} \int_\Omega \varphi(x,t) dx,
	\end{align*}  
	where $ |\Omega| $ is the Lebesgue measure of $ \Omega $. More precisely, we have 
	\begin{align*}
		\overline{\varphi (t)}  = \overline{\varphi (0)} , \ \forall t \geq 0.
	\end{align*}
	A different form of free energy has been proposed in \cite{MR1453735, MR1638739} and rigorously justified as a macroscopic limit of microscopic phase segregation models with particle conserving dynamics. In this case, the gradient term in \eqref{DP82} is replaced by a non-local spatial operator 
	\begin{align*}
		\mathcal{E}(\varphi) = \frac{1}{4} \int_{\Omega} \int_{\Omega} J(x-y)(\varphi(x)-\varphi(y))^2dxdy + \int_{\Omega} F(\varphi(x))dx,
	\end{align*}
	where $ J: \mathbb{R}^n \rightarrow \mathbb{R} $ is a smooth function such that $ J(x)=J(-x) $. The system with the chemical potential which is the first variation of above $ \mathcal{E} $; is called a nonlocal Cahn-Hilliard-Navier-Stokes system and for such a system $\mu$ is given by:
	\begin{align*}
		\mu = a \varphi -J*\varphi + f(\varphi).
	\end{align*} 
	The well-posedness of nonlocal CHNS system has been well studied in the literature by several authors (see \cite{MR3090070, MR3518604, MR2834896, MR3903266, MR3019479, MR4374591}). For optimal distributed control problems for the same see \cite{MR3456388, MR4104524, MR4108622}.
	For the local CHNS system \eqref{DPPsys1}-\eqref{DP22}, the existence and uniqueness of a weak solution and the existence of the strong solution have been obtained in \cite{MR1700669} in the case of regular potential. In the same work author also studies the case of singular potential, for the existence of a weak solution by approximating the singular potential with a sequence of regular potentials and passing to the limit of the corresponding solutions. Certain stability results have also been established. In \cite{MR2580516}, authors analyse the asymptotic behaviour of the solution of the local Cahn-Hilliard-Navier-Stokes system. In fact, they have proved the existence of global and exponential attractors.
	
	% The stationary Cahn-Hilliard-Navier-Stokes has been studied in \cite{MR3524178}.
	
	Optimal control theory of fluid dynamics models has been an important research area of applied mathematics with many applications across the fields. In \cite{MR4190792}, authors have considered the distributed optimal control problem as the minimization of the total energy and dissipation of energy of the flow with local Cahn-Hilliard-Navier-Stokes system where the controls appear in the form of volume force densities.  The existence of an optimal control as well as the first order necessary optimality conditions are established,
	and the optimal control is characterised in terms of adjoint variables. The optimal control problems with state constraint and robust control for the same system are investigated in \cite{MR3436705, MR3565933}, respectively.  
	Optimal control problems of semi-discrete Cahn-Hilliard-Navier-Stokes system for various cases like distributed and boundary control, with non-smooth Landau-Ginzburg energies and with non-matched fluid densities are studied in  \cite{MR3729372, MR3170501, MR3843444}. These works considered the local Cahn-Hilliard-Navier-Stokes equations for their numerical studies. All these works consider the optimal control problem using Pontryagin's maximum principle. The dynamic programming principle approach is completely open to such problems. In this work, our main aim is to study the dynamic programming principle for a control problem governed by a local CHNS system and derive the corresponding Hamilton-Jacobi-Bellman equation satisfied by the value function. Further, we want to show that the value function is the unique viscosity solution of the corresponding Hamilton-Jacobi-Bellman equation.
	
	%We  investigate a similar problem by relaxing the assumption on the spatial operator $\J$ (see (H4) in \cite{ControlCHNS}), since the system under our consideration  (see (\ref{nonlin phi})-(\ref{initial conditions}) below) is with constant viscosity. Also, we show that the G\^ateaux derivative of the control to state map is weaker than that obtained in \cite{ControlCHNS}. Moreover, we have also derived that the optimality condition coincides with the one which is derived in \cite{ControlCHNS}.

	%\textcolor{green}{Viscosity solution theory}
	\par 
	The viscosity solution method, a notion of generalised solution, well suited for the first order fully non-linear partial differential equations typically of the Hamilton-Jacobi type was first introduced by Crandall and Lions in \cite{MR690039}. They have further studied Hamilton-Jacobi equations in infinite dimensions using viscosity solution in a series of eight papers during 1984-1992 (see \cite{MR794776, MR826434, MR852660, MR1052335, MR1111190, MR1254890, MR1297016, MR0690039}). The value function of an optimal control problem subjected to a differential equation satisfies the Hamilton-Jacobi-Bellman (HJB) equation whenever the value function is smooth. However, seldom these value functions are $C^1$. The viscosity solution theory helps to tackle this issue by showing that the value function is the unique viscosity solution of the corresponding HJB equations. For a comprehensive treatment of these ideas, one can look at \cite{MR1484411} and references therein. Various works in these directions can be found in \cite{MR1160081, MR2690118}. Most of these works have assumptions on the operators involved in the constrained PDE to be linear or have some special properties like being m-accretive. Various works in these directions can be found in \cite{MR1160081, MR2690118}.   In \cite{MR1145836, MR1280618}, authors have generalised the ideas of viscosity solution theory to infinite dimensional problems with unbounded nonlinear terms. This allows us to treat value functions corresponding to the control problems constrained by nonlinear partial differential equations using viscosity solution theory. 
	
	The optimal control problems governed by non-linear PDEs and coupled non-linear equations are well studied using Pontryagin's maximum principle approach however not much work is done using viscosity solution theory. First-order Hamilton-Jacobi-Bellman (HJB) equations associated with the control of deterministic Navier-Stokes equations were considered in \cite{MR1918929, MR1102218} and the existence and uniqueness of viscosity solution were proven. The approach presented in \cite{MR1918929} can be extended to second-order HJB equations of the same type, for example, see \cite{MR2141895}, where authors study the infinite-dimensional second-order HJB equations that arise in the problem of optimal control of stochastic Navier-Stokes equations. The equations considered in \cite{MR2141895} are more general than those investigated in \cite{MR1765670}. Authors have also introduced a different definition of viscosity solution compared to the one used in \cite{MR1918929} which allows one to handle cost functionals that are very singular. Moreover, it also applies to the first-order HJB equations investigated in \cite{MR1918929} and thus can lead to an alternative theory of these equations that covers some cases not treated by the techniques of \cite{MR1918929}. In \cite{MR1753181}, authors have studied a class of HJB equations associated with stochastic optimal control of the Duncan-Mortensan-Zakai equation. For the related results on second-order HJB equations we refer \cite{MR1092739, MR3674558, MR1741146, MR1326907, MR1919513, MR2307066, MR1214874, MR971797, MR934730, MR2690118, MR1301180}. 
	%For the famous fluid flow equations of Navier-Stokes' equations, an optimal control problem is treated in  \cite{MR1918929, MR1102218} using  viscosity solution method. The authors prove the dynamic  programming principle satisfied by the corresponding value function and  existence of a unique viscosity solutions.  Recently, \cite{MR4185054} treats tidal dynamics equations, a coupled non-linear system to prove the existence of viscosity solution using Dynamic Programming Principle though the  uniqueness question is left open. 
	
	In the current work, we propose the use of the viscosity solution technique to study the optimal control problems governed by the local CHNS system. As per our knowledge, this is the first attempt to study the optimal control problem of the coupled nonlinear system using the viscosity solution method; which proves the existence and uniqueness of the solution of the corresponding HJB equations satisfied by the value function.
	%\textcolor{green}{Viscosity solution theory}
	%
	%The viscosity solution method, which is another notion of generalised solution, was first introduced by Crandall and Lions in \cite{MR690039} in the case of finite dimensional Hamilton-Jacobi equations. The theory is developed in case of infinite dimensional Hamilton-Jacobi equations in \cite{MR1145836,MR1280618}.   
	%
	%\textcolor{blue}{Results available}
	%
	%Optimal control problem has been tackled for Navier-Stokes equations using viscosity solution method in \cite{MR1918929}.
	
	The structure of the paper is as follows. In the next section, we describe the mathematical setting to study the local Cahn-Hilliard-Navier-Stokes equation. We recall some existence and uniqueness results available in the literature. We also present some convergence results of the solution with respect to the initial data. In section \ref{sec DPP}, we define the value function and derive its continuity properties. We also state the Dynamic Programming Principle (DPP) (see Theorem \ref{DPP}) and prove that the value function satisfies the DPP. In Section 4 we state the Hamilton-Jacobi- Bellman (HJB) equations satisfied by the value function and prove that the value function satisfies the HJB equation in the viscosity sense (see Theorem \ref{HJBThm}). In the last section, we show that the value function is the unique viscosity solution of the corresponding HJB equation via comparison principle (see Theorem \ref{comp_ppl}). 
	\section{Mathematical setting}
	In this section, we introduce the necessary function spaces needed throughout the paper.  We define some operators to write \eqref{DPPsys1}-\eqref{DPPsys2} in the abstract form. We also state the existence, uniqueness, and strong solution results of the system \eqref{DPPsys1}-\eqref{DPPsys2}.
	
	Hereafter, we assume that the domain $ \Omega $ is a bounded subset of $\mathbb{R}^2$ with a smooth boundary $ \partial \Omega $. We denote by $\mathbb{W}^{m,p}(\Omega; \mathbb{R}^2) = [W^{m,p}(\Omega; \mathbb{R})]^2$ the Sobolev space of order $m \in [0, \infty)$ and $p \geq 1$ of functions with values in $\mathbb{R}^2$. The norm of $\u \in W^{m,p}(\Omega; \mathbb{R}^2)$ will be denoted by $\|u\|_{m,p}$. We denote $L^p= W^{0,p}$. Moreover, we will write $\H^m = \mathbb{W}^{m,2}(\Omega; \mathbb{R}^2)$  and $\H =W^{0,2}(\Omega; \mathbb{R}^2) $ for vector valued functions, and $H^m = W^{m,2}(\Omega; \mathbb{R}), H=W^{0,2}(\Omega; \mathbb{R})$ for scalar valued functions.  
	
	Let us set 
	\begin{align*}
		\mathcal{D} = \left\{ \u \in C_c^\infty (\Omega) \ | \ \mathrm{div}(\u)=0 \right\}.
	\end{align*}
	Then we define
	\begin{align*}
		\G_{\mathrm{div}} = \mathrm{closure \ of }\  \mathcal{D} \mathrm{ \ in \ } \L^2(\Omega),\\
		\V_{\mathrm{div}} = \mathrm{closure \ of }\  \mathcal{D} \mathrm{ \ in \ } \H^1_0(\Omega).
	\end{align*}
	We denote by $ \| \cdot \| $ and $ (\cdot, \cdot) $, the norm and the scalar product, respectively, on both $ H $ and $ \G_{{\mathrm{div}}} $.
	We define the operator $A$ by 
	\begin{align*}
		A \mathbf{u}= - \mathcal{P}\Delta \mathbf{u} , \forall \u \in D(A) = \H^2 \cap \V_{\mathrm{div}},
	\end{align*}
	where $\mathcal{P}$ is the Leray-Helmholtz projector or the Stokes operator in $H$ onto $\G_{\mathrm{div}}$. We know that $ A $ is self-adjoint and positive definite, $ A^{-1} $ is compact and $ A $ generates an analytic semigroup. For $ \alpha \geq 0 $ we denote by $\V_{\alpha}$ the domain of $A^{\frac{\alpha}{2}}$, $D(A^{\frac{\alpha}{2}})$, equipped with the norm 
	\begin{align}
		\| \u \|_{\alpha} = \| A^{\frac{\alpha}{2}}\u\|.
	\end{align}
We introduce the non-negative linear unbounded operator on $L^2(\Omega)$ 
\begin{align}
A_N \varphi = -\Delta \varphi , \quad \forall \varphi \in D(A_N) = \{ \varphi \in H^2(\Omega) : \ \frac{\partial \varphi}{\partial \n}=0 \ \mathrm{on} \ \partial \Omega  \}
\end{align}
and we endow $D(A_N)$ with the norm $\|A_N (\varphi) \| +  |\overline{\varphi}|$, which is equivalent to the $H^2$-norm. We also define linear positive unbounded operator on the Hilbert space $L^2_0(\Omega)$ of the $L^2$-functions with zero mean value
\begin{align*}
B_N \varphi = -\Delta \varphi , \quad \forall \varphi \in D(B_N) = D(A_N) \cap L^2_0(\Omega)
\end{align*} 
Note that $B_N^{-1}$ is a compact linear operator on $L^2_0(\Omega)$. More generally, we can define $B_N^s$ for any $s \in \R$ noting that $\|B_N^{s/2} \cdot
 \|, s>0$, is an equivalent norm to the canonical $H^s$-norm on $D(B_N^{s/2}) \subset H^s(\Omega) \cap L^2_0(\Omega)$. Also, note that $A_N = B_N$ on $D(B_N)$. If $\varphi$ is such that $\varphi- \overline{\varphi}  \in D(B_N^{s/2})$, we have that $\|B_N^{s/2}(\varphi -  \bar{\varphi}  ) \| +  | \bar{\varphi} | $ is equivalent to the $H^s$-norm. Moreover, we set $H^{-s}(\Omega)= (H^s(\Omega))^*$, whenever $s>0$.  We also denote $ _{X'} \langle \cdot, \cdot \rangle_X  $ as a duality pairing between the Hilbert space $ X $ and its dual space $ X' $.

We state some useful and known estimates as lemmas below.
\begin{lemma}[Gagliardo-Nirenberg interpolation inequality, Theorem 1, \cite{MR109940}]\label{GNI} Let $\Omega\subset\R^n$, $\u\in\mathbb{W}^{m,p}(\Omega;\R^n), p\geq 1$ and fix $1 \leq p,q \leq \infty$ and a natural number $m$. Suppose also that a real number $\theta$ and a natural number $j$ are such that
\begin{align}\label{theta}
\theta=\left(\frac{j}{n}+\frac{1}{q}-\frac{1}{r}\right)\left(\frac{m}{n}-\frac{1}{p}+\frac{1}{q}\right)^{-1}
\end{align}
and	$\frac{j}{m} \leq \theta \leq 1.$ Then for any $\u\in\mathbb{W}^{m,p}(\Omega;\R^n),$ we have 
\begin{align} \label{gn1}
\|\nabla^j\u\|_{L^r}\leq C\left(\|\nabla^m\u\|_{L^p}^{\theta}\|\u\|_{L^q}^{1-\theta}+\|\u\|_{L^s}\right),
\end{align}
where $s > 0$ is arbitrary and the constant $C$ depends upon the domain $\Omega, m, n$. 
\end{lemma}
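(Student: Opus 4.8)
The plan is to reduce the estimate to the whole-space case $\Omega=\R^n$ via a Sobolev extension, prove the fundamental first-order inequality from scratch, and then bootstrap to arbitrary $j,m$ by iteration, with the exponent $\theta$ in \eqref{theta} being forced by a scaling (dilation) argument rather than rediscovered at each step.

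First I would treat $\Omega=\R^n$ with $j=0$, $m=1$. The cornerstone is the elementary bound $\|w\|_{L^{n/(n-1)}(\R^n)}\le \prod_{i=1}^{n}\|\partial_i w\|_{L^1(\R^n)}^{1/n}$ for $w\in C_c^1(\R^n)$, obtained by writing $|w(x)|\le\int_{\R}|\partial_i w|\,dt$ in each coordinate direction and then applying the generalized Hölder inequality iteratively (the Loomis--Whitney argument); in particular $\|w\|_{L^{n/(n-1)}}\le C\|\nabla w\|_{L^1}$. Applying this to $w=|u|^{\gamma}$ with $\gamma=\frac{p(n-1)}{n-p}$ and using Hölder on $\nabla w=\gamma|u|^{\gamma-1}\mathrm{sgn}(u)\nabla u$ yields $\|u\|_{L^{p^*}}\le C\|\nabla u\|_{L^p}$ with $p^*=\frac{np}{n-p}$ for $1\le p<n$ (the cases $p\ge n$ being handled by a separate limiting argument, interpolating with a higher-integrability bound). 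One then interpolates: for $\tfrac1r=\tfrac t{p^*}+\tfrac{1-t}{q}$, Hölder gives $\|u\|_{L^r}\le\|u\|_{L^{p^*}}^{t}\|u\|_{L^q}^{1-t}\le C\|\nabla u\|_{L^p}^{t}\|u\|_{L^q}^{1-t}$, and since $\tfrac1{p^*}=\tfrac1p-\tfrac1n$, solving $\tfrac1r=\tfrac t{p^*}+\tfrac{1-t}q$ for $t$ returns exactly \eqref{theta} with $j=0,\ m=1$.

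Next I would iterate. Applying the first-order inequality componentwise to $\nabla^{\,l}u$ bounds $\|\nabla^{\,l}u\|_{L^r}$ by a product of a power of $\|\nabla^{\,l+1}u\|_{L^p}$ and a power of $\|\nabla^{\,l}u\|_{L^q}$; combining such estimates for all intermediate orders $j\le l\le m$ in an induction on $m-j$ produces \eqref{gn1}. The value of $\theta$ need not be guessed at each stage: both sides of \eqref{gn1} are positively homogeneous, and under the dilation $u(\cdot)\mapsto u(\lambda\,\cdot)$ one has $\|\nabla^j u(\lambda\cdot)\|_{L^r}=\lambda^{j-n/r}\|\nabla^j u\|_{L^r}$, $\|\nabla^m u(\lambda\cdot)\|_{L^p}=\lambda^{m-n/p}\|\nabla^m u\|_{L^p}$, $\|u(\lambda\cdot)\|_{L^q}=\lambda^{-n/q}\|u\|_{L^q}$, so the two sides scale with the same power of $\lambda$ precisely when $j-n/r=\theta(m-n/p+n/q)-n/q$, i.e. when $\theta$ satisfies \eqref{theta}; the admissibility range $j/m\le\theta\le1$ is exactly what keeps every interpolation parameter $t$ appearing in the chain inside $[0,1]$.

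Finally, for a bounded $\Omega$ with smooth boundary I would invoke a bounded linear Sobolev extension operator $E\colon\mathbb{W}^{m,p}(\Omega;\R^n)\to\mathbb{W}^{m,p}(\R^n;\R^n)$, apply the whole-space inequality to $Eu$ and restrict back to $\Omega$; since $\|Eu\|_{\mathbb{W}^{m,p}(\R^n)}\lesssim\|u\|_{\mathbb{W}^{m,p}(\Omega)}$ and, on a bounded domain, the full Sobolev norm is controlled by $\|\nabla^m u\|_{L^p}+\|u\|_{L^s}$ for any fixed $s>0$ (Ehrling's lemma together with interpolation on $\Omega$), one picks up precisely the additive remainder term in \eqref{gn1}. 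I expect the main obstacle to be the bookkeeping in the iteration — checking that the composition of Hölder interpolations yields exactly the exponent \eqref{theta} rather than merely some admissible exponent, and that the constant stays finite across the induction — together with producing the domain remainder term without circularly invoking the very inequality being proved.
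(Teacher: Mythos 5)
The paper does not prove this lemma at all: it is imported verbatim as Theorem~1 of Nirenberg's paper \cite{nirenberg2011elliptic} and used as a black box, so there is no internal proof to compare yours against. Your outline is the standard route to the result (Gagliardo's slicing/Loomis--Whitney bound for $L^{n/(n-1)}$, the $|u|^{\gamma}$ trick to reach $L^{p^*}$, H\"older interpolation, dilation to pin down $\theta$, and a Stein extension plus an Ehrling-type norm equivalence to produce the additive $\|\u\|_{\mathbb{L}^s}$ remainder on a bounded domain), and each of those ingredients is sound.

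The one place where your sketch papers over a real difficulty is the induction on derivative order. Chaining the first-order Sobolev inequality applied to $\nabla^{l}\u$ only bounds $\|\nabla^{l}\u\|_{L^r}$ by $\|\nabla^{l+1}\u\|_{L^p}$ alone (the $\theta=1$ endpoint); it does not by itself yield the genuinely multiplicative estimates $\|\nabla^{l}\u\|\lesssim\|\nabla^{l+1}\u\|^{a}\|\nabla^{l-1}\u\|^{1-a}$ that the induction needs. The seed for those is the $j=1$, $m=2$, $\theta=1/2$ case, proved by integrating by parts ($\int|\nabla u|^{2}\le\|u\|\,\|\Delta u\|$ and its $L^p$ variants) and absorbing, after which the general $\theta=j/m$ endpoint follows by induction and the full range $j/m\le\theta\le1$ by interpolating between that endpoint and the pure Sobolev case. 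You should also flag the standard exceptional case ($1<p<\infty$ and $m-j-n/p$ a nonnegative integer, where $\theta=1$ must be excluded), which the statement as quoted in the paper silently omits. With those two repairs your argument is the classical proof; since the paper only cites the result, none of this affects anything downstream in the paper.
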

The following lemma is due to Ladyzenskaya (Lemma 1 and 2, Chapter 1, \cite{MR0254401} and a particular case of the Lemma \ref{GNI}.
\begin{lemma}[Ladyzenskaya's inequality]\label{lady}
For $\u\in\C_0^{\infty}(\Omega;\R^n), n = 2, 3$, there exists a constant $C$ such that
\begin{align}\label{lady1}
\|\u\|_{L^4}\leq C^{1/4}\|\u\|^{1-\frac{n}{4}}\|\nabla\u\|^{\frac{n}{4}},\text{ for } n=2,3,
\end{align}
where $C=2,4,$ for $n=2,3$ respectively. 
\end{lemma}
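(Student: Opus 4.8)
The plan is to establish the inequality first on the whole space $\R^n$ by a one-dimensional slicing argument, and then deduce the statement on $\Omega$: since $\u\in\C_0^\infty(\Omega;\R^n)$ has compact support, its extension by zero lies in $\C_0^\infty(\R^n;\R^n)$ with $\|\u\|$ and $\|\nabla\u\|$ unchanged, so no separate density step is needed. It also suffices to treat scalar-valued $u$: applying the scalar inequality to $v=|\u|$, which lies in $H^1$ with $|\nabla v|\le|\nabla\u|$ pointwise, recovers the vector-valued bound with the same constant. Thus from now on $u\in\C_0^\infty(\R^n)$ is scalar.

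For $n=2$, the key point is the one-dimensional estimate: for every $x_2$,
\begin{align*}
u^2(x_1,x_2)=2\int_{-\infty}^{x_1}u\,\partial_1 u\,\mathrm{d}t_1\le 2\int_{\R}|u|\,|\partial_1 u|\,\mathrm{d}t_1=:\phi(x_2),
\end{align*}
which does not depend on $x_1$, and symmetrically $u^2(x_1,x_2)\le 2\int_{\R}|u|\,|\partial_2 u|\,\mathrm{d}t_2=:\psi(x_1)$, independent of $x_2$. Writing $u^4=u^2\cdot u^2$, bounding the first factor by $\phi(x_2)$ and the second by $\psi(x_1)$, and then applying Tonelli's theorem and the Cauchy--Schwarz inequality on $\R^2$,
\begin{align*}
\int_{\R^2}u^4\,\mathrm{d}x&\le\Big(\int_{\R}\phi(x_2)\,\mathrm{d}x_2\Big)\Big(\int_{\R}\psi(x_1)\,\mathrm{d}x_1\Big)\\
&\le 4\,\|u\|^2\,\|\partial_1 u\|\,\|\partial_2 u\|\le 2\,\|u\|^2\,\|\nabla u\|^2,
\end{align*}
the last step using $\|\partial_1 u\|\,\|\partial_2 u\|\le\tfrac12(\|\partial_1 u\|^2+\|\partial_2 u\|^2)\le\tfrac12\|\nabla u\|^2$. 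Taking fourth roots gives \eqref{lady1} with $C=2$.

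For $n=3$, I would bootstrap from the planar case. Fixing $x_3$ and applying the two-dimensional inequality to the slice $(x_1,x_2)\mapsto u(x_1,x_2,x_3)$ yields $\int_{\R^2}u^4\,\mathrm{d}x_1\mathrm{d}x_2\le 2\,G(x_3)H(x_3)$, where $G(x_3):=\int_{\R^2}u^2\,\mathrm{d}x_1\mathrm{d}x_2$ and $H(x_3):=\int_{\R^2}|\nabla u|^2\,\mathrm{d}x_1\mathrm{d}x_2$. Integrating in $x_3$ and using $\int_{\R}H(x_3)\,\mathrm{d}x_3=\|\nabla u\|^2$ gives $\int_{\R^3}u^4\,\mathrm{d}x\le 2\big(\sup_{x_3}G(x_3)\big)\|\nabla u\|^2$. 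Since $G$ vanishes at $-\infty$ and $G'(t)=2\int_{\R^2}u\,\partial_3 u\,\mathrm{d}x_1\mathrm{d}x_2$, the fundamental theorem of calculus in $x_3$ and Cauchy--Schwarz on $\R^3$ give $\sup_{x_3}G(x_3)\le\int_{\R}|G'(t)|\,\mathrm{d}t\le 2\int_{\R^3}|u|\,|\partial_3 u|\,\mathrm{d}x\le 2\|u\|\,\|\nabla u\|$. Hence $\int_{\R^3}u^4\,\mathrm{d}x\le 4\|u\|\,\|\nabla u\|^3$, which is \eqref{lady1} with $C=4$.

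This is a classical inequality and there is no serious obstacle; the only part demanding attention is the bookkeeping of the numerical constants. Although the two-dimensional bound is also subsumed by the Gagliardo--Nirenberg inequality of Lemma \ref{GNI} (with $j=0$, $r=4$, $m=1$, $p=q=2$, so $\theta=\tfrac12$, after discarding the lower-order term by working on $\R^2$), the slicing argument is what makes the explicit constants transparent: keeping the one-sided estimate and inserting $\|\partial_1 u\|\,\|\partial_2 u\|\le\tfrac12\|\nabla u\|^2$ is exactly what produces $C=2$ for $n=2$, and one further slice-and-integrate step upgrades it to $C=4$ for $n=3$.
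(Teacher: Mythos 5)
Your proof is correct: the paper states Ladyzhenskaya's inequality without proof as a classical fact, and your slicing argument is the standard one, with the constants $C=2$ (from the one--sided fundamental theorem of calculus bound plus $\|\partial_1 u\|\,\|\partial_2 u\|\le\tfrac12\|\nabla u\|^2$) and $C=4$ (from one further slice--and--integrate step) coming out exactly as claimed. The only point worth tightening is the reduction to scalar $u$: the function $v=|\u|$ is not $\C_0^\infty$ but only Lipschitz with compact support, so you should either invoke density of $\C_0^\infty$ in $H^1_0$ to extend the scalar inequality to $v$, or, more simply, run the slicing directly on $|\u|^2$ using $\partial_j|\u|^2=2\,\u\cdot\partial_j\u\le 2|\u|\,|\partial_j\u|$, which avoids the regularity issue and gives the same constants.
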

	
	\begin{lemma}[Agmon's inequality] (Lemma 13.2, \cite{MR2589244})
		Let $u \in H^2 \cap H^1_0(\Omega)$. Then, there exists a constant $C>0$ such that
		\begin{align} \label{agmon}
			\|u\|_{L^\infty} \leq C\|u\|^{1/2} \|u\|^{1/2}_{H^2}.
		\end{align}
	\end{lemma}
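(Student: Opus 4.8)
The plan is to deduce this two‑dimensional Agmon inequality directly from the Gagliardo--Nirenberg interpolation inequality already recorded in Lemma~\ref{GNI}. I would apply \eqref{gn1} to the scalar function $u$ (the statement extends verbatim to scalar‑valued functions, or one applies it componentwise) with the choices $n=2$, $j=0$, $r=\infty$ (so that $\tfrac1r=0$), $m=2$ and $p=q=2$. Then \eqref{theta} yields
\[
\theta=\left(\frac{0}{2}+\frac{1}{2}-0\right)\left(\frac{2}{2}-\frac{1}{2}+\frac{1}{2}\right)^{-1}=\frac{1}{2},
\]
and the admissibility requirement $\frac{j}{m}=0\le\theta=\frac12\le1$ is met, so Lemma~\ref{GNI} gives, for any fixed $s>0$,
\[
\|u\|_{L^\infty}\le C\left(\|\nabla^2 u\|^{1/2}\,\|u\|^{1/2}+\|u\|_{L^s}\right).
\]

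The only remaining point is to absorb the lower‑order remainder term. Taking $s=2$ and using the trivial inequalities $\|u\|\le\|u\|_{H^2}$ and $\|\nabla^2 u\|\le\|u\|_{H^2}$, I would write
\[
\|u\|_{L^2}=\|u\|^{1/2}\|u\|^{1/2}\le\|u\|^{1/2}\|u\|_{H^2}^{1/2},\qquad
\|\nabla^2 u\|^{1/2}\|u\|^{1/2}\le\|u\|_{H^2}^{1/2}\|u\|^{1/2},
\]
so that both terms on the right‑hand side are controlled by $\|u\|^{1/2}\|u\|_{H^2}^{1/2}$, and the stated estimate follows with a relabelled constant $C$. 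The hypothesis $u\in H^2\cap H^1_0(\Omega)$ guarantees that all norms involved are finite and, through Poincaré's inequality together with elliptic regularity, that $\|\Delta u\|$ is an equivalent norm on this space; one may therefore equivalently record the bound in the sharper form $\|u\|_{L^\infty}\le C\|u\|^{1/2}\|\Delta u\|^{1/2}$.

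I do not anticipate any genuine obstacle here: the argument is a single use of interpolation plus bookkeeping of the extra term that Gagliardo--Nirenberg produces on a bounded domain. If instead a self‑contained proof were wanted, I would argue by Fourier analysis: pick a bounded extension $Eu\in H^2(\R^2)$ of $u$, use $\|u\|_{L^\infty}\le\|Eu\|_{L^\infty}\le C\|\widehat{Eu}\|_{L^1(\R^2)}$, split the frequency integral over $\{|\xi|\le R\}$ and $\{|\xi|>R\}$, bound the first piece by Cauchy--Schwarz against $\|Eu\|_{L^2}$ and the second by Cauchy--Schwarz against $\|\,|\xi|^2\widehat{Eu}\,\|_{L^2}=C\|\Delta Eu\|_{L^2}$ (the integral $\int_{|\xi|>R}|\xi|^{-4}\,d\xi$ converging precisely because $4>n=2$), and finally optimise over $R>0$; this yields $\|u\|_{L^\infty}\le C\|Eu\|_{L^2}^{1/2}\|\Delta Eu\|_{L^2}^{1/2}\le C\|u\|^{1/2}\|u\|_{H^2}^{1/2}$ by boundedness of the extension operator.
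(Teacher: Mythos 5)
Your argument is correct, but there is nothing in the paper to compare it against: the paper states Agmon's inequality as a cited result (Lemma 13.2 of \cite{MR2589244}) and gives no proof, so any valid derivation you supply is necessarily "different" from the paper's treatment. Your primary route via Lemma~\ref{GNI} is sound: with $n=2$, $j=0$, $r=\infty$, $m=2$, $p=q=2$ formula \eqref{theta} does give $\theta=\tfrac12$, and the one exceptional case in the Gagliardo--Nirenberg theorem (when $m-j-n/p$ is a nonnegative integer, here equal to $1$, the estimate fails only at the endpoint $\theta=1$) does not affect you since $\theta=\tfrac12<1$; the absorption of the additive remainder $\|u\|_{L^2}$ into $\|u\|^{1/2}\|u\|_{H^2}^{1/2}$ is elementary and correct. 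The Fourier-extension argument you sketch as an alternative is the classical self-contained proof and is also correct, including the observation that $\int_{|\xi|>R}|\xi|^{-4}\,d\xi<\infty$ precisely because $4>n=2$ and the optimization over $R$. Your closing remark that on $H^2\cap H^1_0(\Omega)$ one may replace $\|u\|_{H^2}$ by $\|\Delta u\|$ via elliptic regularity recovers the sharper classical form of Agmon's inequality; note only that the paper actually applies the lemma to functions satisfying Neumann-type conditions rather than $u\in H^1_0$, so the version with the full $H^2$-norm, which is what you prove and what interpolation gives without boundary conditions, is the one genuinely used.
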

	\begin{lemma}[Ponicare-Wirtinger inequality] (page 312, \cite{MR2759829})
		Let $ \Omega $ be a connected open set of class $ C^1 $ and let $ 1 \leq p \leq \infty $. Then there exists a constant $ C $ such that 
		\begin{align}
			\|u - \bar u \| \leq C \|\nabla u \|_{L^p}, \quad \forall u \in W^{1,p}(\Omega).
		\end{align}
	\end{lemma}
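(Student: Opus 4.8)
Since the statement is the classical Poincar\'e--Wirtinger inequality quoted from \cite{MR2759829}, it may be taken verbatim from that reference; if one wants a self-contained argument, the natural route is the standard compactness-by-contradiction scheme, which uses only that $\Omega$ is bounded, connected and has $C^1$ boundary. I will outline it for the case $p=2$, with $\|\cdot\|$ the $L^2$-norm, since that is the instance actually used later in the paper; the general range $1\le p\le\infty$ is handled in the same way, measuring both sides in $L^p$.

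The argument runs as follows. Assume the inequality is false; then for each $n\in\N$ there is $u_n\in W^{1,2}(\Omega)$ with $\|u_n-\langle u_n\rangle\|>n\,\|\nabla u_n\|$, and in particular $u_n\neq\langle u_n\rangle$, so one may normalise and put
\[
v_n:=\frac{u_n-\langle u_n\rangle}{\|u_n-\langle u_n\rangle\|}.
\]
This gives $\langle v_n\rangle=0$, $\|v_n\|=1$ and $\|\nabla v_n\|<1/n\to 0$, so $(v_n)$ is bounded in $W^{1,2}(\Omega)$. Since $\Omega$ is bounded with $C^1$ boundary, the Rellich--Kondrachov theorem yields a subsequence, not relabelled, and a limit $v\in L^2(\Omega)$ with $v_n\to v$ strongly in $L^2(\Omega)$ while $\nabla v_n\to 0$ in $L^2(\Omega)$. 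Letting $n\to\infty$ in $\int_\Omega v_n\,\partial_i\phi\,dx=-\int_\Omega(\partial_i v_n)\,\phi\,dx$ for all $\phi\in C_c^\infty(\Omega)$ shows $\nabla v=0$ in the sense of distributions, and since $\Omega$ is connected this forces $v$ to equal a constant almost everywhere. That constant is $\langle v\rangle=\lim_n\langle v_n\rangle=0$, which contradicts $\|v\|=\lim_n\|v_n\|=1$. Hence a finite constant $C$ must exist.

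The only step carrying real content is the compactness: it is precisely there that ``bounded'' and ``$C^1$ boundary'' enter, through Rellich--Kondrachov, while ``connected'' is what promotes the identity $\nabla v=0$ to the conclusion that $v$ is constant. Everything else is routine bookkeeping. For general $p$ one repeats the three moves --- normalise, extract an $L^p$-convergent subsequence via the appropriate compact Sobolev embedding, identify the constant limit --- with no new ideas; and, as noted, one may instead simply cite \cite{MR2759829}.
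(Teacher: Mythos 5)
Your argument is correct: it is the classical compactness-by-contradiction proof of the Poincar\'e--Wirtinger inequality (normalise, apply Rellich--Kondrachov, use connectedness to conclude the limit is the constant $\langle v\rangle=0$, contradict $\|v\|=1$). For comparison, the paper itself supplies no proof at all --- the lemma is stated purely as a citation to the reference, exactly as you anticipate in your opening remark --- so there is no ``paper's route'' to diverge from. Your self-contained version is the standard textbook argument and correctly isolates where each hypothesis (boundedness and $C^1$ regularity for the compact embedding, connectedness for the rigidity of $\nabla v=0$) is used. One small caution if you wanted to make the full range $1\le p\le\infty$ airtight: the endpoint cases need the appropriate compact embeddings ($W^{1,1}\hookrightarrow L^1$ for bounded $C^1$ domains, and $W^{1,\infty}\hookrightarrow C(\overline{\Omega})$ via Arzel\`a--Ascoli for $p=\infty$), but these are available under the stated hypotheses, so the scheme goes through as you claim; in any case the paper only ever invokes the $L^2$ instance.
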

	\begin{lemma}[\cite{MR1918929}]
		If $m \geq 0, mp  \leq 2$ and $p \leq q \leq \frac{2p}{2-mp}$ then $W^{m,p}(\Omega) \hookrightarrow L^q(\Omega)$, i.e.,
		\begin{align*}
			\|\u\|_{0,q} \leq C \|\u\|_{m,p} \quad \mathrm{for} \ \u \in W^{m,p}(\Omega) ,
		\end{align*}
		(note that when $mp=2$ the embedding holds for all $q < \infty$). Combining the above with the equivalence of norms of $\V_\alpha$ and $\H^\alpha$, we find that for $\alpha \in (0,1]$ and $q \in [2, \frac{2}{1-\alpha}]\, (q \in [2, \infty) \ \mathrm{if} \ \alpha =1)$ $\V_\alpha \hookrightarrow \mathbb{L}^q(\Omega)$, i.e.,
		\begin{align}
			\| \u \|_{0,q} \leq C \|\u\|_\alpha \quad \mathrm{for} \ \u \in \V_\alpha. \label{DP48}
		\end{align}
		In particular we have for $\u=A^{-1}\u$ and $\alpha=1$,
		\begin{align*}
			\|A^{-1}\u\|_{L^q} \leq \|A^{-1/2}\u\|, \ \mathrm{for} \ 2 \leq q < \infty.
		\end{align*}
		
		%Moreover, if an operator $\mathbf{S}$ generates an analytic semi-group, then there exists a constant $C$ such that, for every $\z \in D(\mathbf{S})$ and $0 \leq \theta \leq 1,$
		%\begin{align}
		%\|\mathbf{S}^\theta \z \| \leq C \| \mathbf{S}\z \|^\theta \|\z\|^{1-\theta}. \label{DP49}
		%\end{align} 
	\end{lemma}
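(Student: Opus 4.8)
The plan is to reduce the statement to the classical Sobolev embedding theorem on the bounded domain $\Omega\subset\R^{2}$ and then transfer it to the scale $\V_{\alpha}$ through the identification of $D(A^{\alpha/2})$ with a fractional Sobolev space. For the first assertion, when $m$ is a positive integer I would iterate the Gagliardo--Nirenberg--Sobolev inequality: from Lemma~\ref{GNI}, or equivalently from the one-step estimate $\|\nabla^{j}\u\|_{L^{r}}\leq C\big(\|\nabla^{j+1}\u\|_{L^{p}}+\|\u\|_{L^{p}}\big)$ with $\tfrac1r=\tfrac1p-\tfrac1n$, $n=2$, one applies the inequality $m$ times, at each step lowering the order of differentiation by one and raising the integrability exponent, reaching after $m$ steps $\|\u\|_{L^{q}}\leq C\|\u\|_{m,p}$ for every $q$ with $\tfrac1q\geq\tfrac1p-\tfrac{m}{n}$, that is $q\leq\tfrac{2p}{2-mp}$; when $mp=2$ the penultimate space in this chain is the critical one $W^{1,2}(\Omega)$, which embeds into $L^{q}(\Omega)$ for every finite $q$, giving the stated exception. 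The lower bound $q\geq p$ is immediate from $L^{q}(\Omega)\subset L^{p}(\Omega)$ on the bounded $\Omega$ via H\"older. For non-integer $m$ I would deduce the same conclusion by interpolating between the two neighbouring integer-order embeddings, using that the interpolation space of $W^{m_{0},p}(\Omega)$ and $W^{m_{1},p}(\Omega)$ with parameter $\theta$ is $W^{m,p}(\Omega)$ for $m=(1-\theta)m_{0}+\theta m_{1}$; this is exactly the result quoted from \cite{gozzi2002viscosity}.

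For the second assertion I would take $p=2$ and $m=\alpha\in(0,1]$, so that $mp=2\alpha\leq2$ and $\tfrac{2p}{2-mp}=\tfrac{4}{2-2\alpha}=\tfrac{2}{1-\alpha}$, whence $\H^{\alpha}(\Omega)=W^{\alpha,2}(\Omega)\hookrightarrow L^{q}(\Omega)$ for $q\in[2,\tfrac{2}{1-\alpha}]$ (for every finite $q$ when $\alpha=1$). Since $A=-\mathcal{P}\Delta$ is self-adjoint, positive definite and has compact inverse on $\G_{\mathrm{div}}$, the powers $A^{\alpha/2}$ are well defined by the spectral theorem; moreover $\V_{0}=\G_{\mathrm{div}}$ with the $L^{2}$-norm and $\V_{1}=\V_{\mathrm{div}}$ with a norm equivalent to the $\H^{1}$-norm, so interpolation yields $\V_{\alpha}=[\G_{\mathrm{div}},\V_{\mathrm{div}}]_{\alpha}$ with $\|\cdot\|_{\alpha}=\|A^{\alpha/2}\cdot\|$ equivalent to $\|\cdot\|_{\H^{\alpha}(\Omega)}$ for $\alpha\in[0,1]$. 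Composing $\V_{\alpha}\hookrightarrow\H^{\alpha}(\Omega)\hookrightarrow L^{q}(\Omega)$ gives \eqref{DP48}. Finally, taking $\alpha=1$ in \eqref{DP48} and applying it to $\v=A^{-1}\u$, and noting that $A^{1/2}\v=A^{-1/2}\u$, one obtains $\|A^{-1}\u\|_{L^{q}}\leq C\|A^{-1/2}\u\|$ for all $2\leq q<\infty$, with $C$ depending only on $q$ and $\Omega$.

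The only step that is not entirely routine is the norm equivalence $\|\cdot\|_{\alpha}\simeq\|\cdot\|_{\H^{\alpha}(\Omega)}$ on $\V_{\alpha}$ for fractional $\alpha$, i.e.\ the identification of the domain of the fractional power of the Stokes operator with the corresponding fractional Sobolev space (with the standard mild subtlety at $\alpha=\tfrac12$, which is irrelevant here because only an embedding, not an exact identification, is needed). This is a classical fact about the Stokes operator under the present boundary conditions, for which I would appeal to \cite{temam2001navier}; granting it, everything else reduces to the classical Sobolev embedding and H\"older's inequality on the bounded domain $\Omega$.
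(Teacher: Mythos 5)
The paper does not prove this lemma at all: it is quoted as a known fact from \cite{gozzi2002viscosity} in the preliminaries, and only its consequences \eqref{DP48} and the final display for $A^{-1}\u$ are ever used. Your argument therefore supplies a proof where the paper offers only a citation, and it is a correct, standard one: iterated Gagliardo--Nirenberg/Sobolev embedding in dimension $n=2$ for integer $m$ (with the critical case $mp=2$ handled by stopping at $W^{1,2}\hookrightarrow L^{q}$ for all finite $q$), interpolation for fractional $m$, the $L^{q}\subset L^{p}$ inclusion on the bounded domain for the lower bound $q\geq p$, the norm equivalence $\|A^{\alpha/2}\cdot\|\simeq\|\cdot\|_{H^{\alpha}}$ on $D(A^{\alpha/2})$ for $\alpha\in[0,1]$, and finally the substitution $\v=A^{-1}\u$ with $A^{1/2}\v=A^{-1/2}\u$. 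You correctly isolate the identification of the domains of fractional powers of the Stokes operator with fractional Sobolev spaces as the one non-routine ingredient; the paper assumes exactly this when it invokes ``the equivalence of norms of $\V_\alpha$ and $H^\alpha$.'' Two minor caveats, neither of which affects how the lemma is used: for non-integer $m$ and $p\neq 2$ the interpolation space of integer-order Sobolev spaces is a Besov--Slobodeckij space rather than a Bessel-potential space, so one should fix which definition of $W^{m,p}$ is intended (immaterial here, since every application has $p=2$); and the constant in the last display of the statement is genuinely present, as you keep it, and necessarily degenerates as $q\to\infty$, whereas the paper's formulation suppresses it.
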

	Through out this paper we  assume  that $f$ appearing in \eqref{DPPsys1} - \eqref{DPPsys2}  satisfies following properties:
	\begin{itemize}
		
		\item[(\textbf{A1})] We assume that $f \in C^2(\mathbb{R})$ satisfies 
		\begin{align} \label{f condition}
			\begin{split}
				& \lim_{|s|\rightarrow \infty } f '(s) >0, \\
				& |f''(s)| \leq C_f(1+ |s|^{r-1}), \ \forall s \in \mathbb{R},
			\end{split}
		\end{align}
		where $ C_f $ is some positive constant and $r\in [1,\infty)$ is fixed. 
	\end{itemize}
	From \eqref{f condition} it follows that
	\begin{align} \label{f condition2}
		&|f'(s)| \leq C_f(1+|s|^r), \quad \forall s \in \mathbb{R} \\
	    &|f(s)| \leq C_f(1+|s|^{r+1}), \quad \forall s \in \mathbb{R}.\label{f condition3}
	\end{align}
	Let us define the following operators,
	\begin{align*}
		&b(\u,\v,\w) = \langle B(\mathbf{u}, \mathbf{v}), \mathbf{w} \rangle= \int_\Omega (\mathbf{u} \cdot \nabla ) \mathbf{v} \cdot \mathbf{w}, \quad \forall \u,\v,\w \in D(A),\\
		&b_1(\u,\varphi, \psi) =\langle B_1(\mathbf{u}, \varphi), \psi \rangle= \int_\Omega (\mathbf{u} \cdot \nabla \varphi ) \psi  , \quad \forall \u \in D(A), \ \varphi , \psi \in D(A_N),\\
		&b_2(\mu,\varphi, \w)=\langle B_2 (\mu, \varphi), \mathbf{w} \rangle = \int_\Omega \mu (\nabla \varphi \cdot \mathbf{w}) \quad \forall \mu \in L^2(\Omega), \ \varphi \in D(A_N) \cap H^3(\Omega), \ \w \in D(A).
	\end{align*}
	Also recall that  using the properties of these operators and standard inequalities mentioned above we can deduce the following estimates for these operators (page 811, \cite{MR4254175}):
	\begin{align}
		\|B(\u,\v)\|_{\V'_{\mathrm{div}}} &\leq C \|\u\|^{1/2}\|\nabla \u\|^{1/2} \|\nabla \v\| ,\\
		\|B(\u,\v)\| &\leq C \|\u\|^{1/2} \|\nabla \u\|^{1/2} \|A\v\|^{1/2}, \\
		\|B_1(\u,\varphi)\|_{D(B_N^{1/2})'} & \leq C \|\u\|^{1/2} \|\nabla \u\|^{1/2} \|\nabla \varphi\|, \\
		\|B_1(\u,\varphi)\| &\leq C \|\u\|^{1/2} \|\nabla \u\|^{1/2} \|\nabla \varphi\|^{1/2} \|A_N\varphi\|^{1/2}, \label{DP95}\\
		\|B_2(A_N\varphi,\psi)\|_{\V'_{\mathrm{div}}} &\leq C \|A_N\varphi\|^{1/2} \|\varphi\|^{1/2}_{H^3} \|\nabla\psi\| ,\\
		\|B_2(A_N\varphi,\psi)\| &\leq C \|A_N\varphi\|^{1/2} \|\varphi\|^{1/2}_{H^3} \|\nabla \psi\|^{1/2} \|A_N \psi\|^{1/2}.
	\end{align}
	Using the above-defined operators, and using $F'(\varphi) \nabla \varphi = \nabla F(\varphi)$ and by defining new pressure term $\tilde{\pi}= \pi - F(\varphi)$, and assuming that the external forcing term acts as a control we write the controlled system \eqref{DPPsys1}-\eqref{DPPsys2} in the abstract form as follows
	\begin{align}
		&\frac{d \varphi}{d t} + B_1(\u, \varphi) + A_N \mu = 0, \label{DP5} \\
		&\mu = A_N \varphi + f (\varphi), \\
		&\frac{d \u}{d t} + A \u + B(\u, \u) - B_2(A_N \varphi , \varphi) = \U, \label{DP6} \\
		& (\varphi, \u)(\tau) = (\rho, \v). \label{DP8} 
	\end{align}
	\begin{definition} \cite{MR3436705}
		Let $\rho \in D(B_N^{1/2})$ and $\v \in \G_\mathrm{div}$. Let $\U \in L^2(\tau, T; \G_\mathrm{div})$. Then, a pair $(\varphi, \u)$ is called a weak solution of \eqref{DP5}-\eqref{DP8} on $[\tau, T]$ if it satisfies \eqref{DP5}-\eqref{DP8} in a weak sense and 
		\begin{align*}
			&\varphi \in C([\tau,T];D(B_N^{1/2})) \cap L^2(\tau,T; D(B_N)) , \varphi_t \in L^2(\tau,T; D(B_N^{1/2})'), \\
			&\u \in C([\tau,T]; \G_{\mathrm{div}}) \cap L^2(\tau,T; \V_{\mathrm{div}}) , \u_t \in L^2(\tau, T; \V'_{\mathrm{div}}).
		\end{align*}
	\end{definition}
	
	\begin{definition}\cite{MR3436705}
		If $\rho \in D(B_N)$ and $\v \in \V_{\mathrm{div}}$, then a weak solution $(\varphi, \u)$ is called a strong solution of the system \eqref{DP5}-\eqref{DP8} and it satisfies
		\begin{align*}
			& \varphi \in C([\tau,T]; D(B_N)) \cap L^2(\tau,T; D(B_N) \cap H^3(\Omega))\\
			&\u \in C([\tau,T]; \V_{\mathrm{div}}) \cap L^2(\tau,T; D(A))
		\end{align*}
	\end{definition}
Using the similar techniques as in Proposition 2.1 of \cite{MR3436705} we can prove the following theorem.	
	\begin{theorem}\label{existence}
		For $\rho \in D(B_N^{1/2}), \v \in \G_{\mathrm{div}}$, $\F \in C^2(\mathbb{R})$ and $\U \in L^2(\tau, T; \G_{\mathrm{div}})$, the system \eqref{DP5}-\eqref{DP8} has a unique weak solution and the following estimates holds for all $t \in [\tau, T]$
		\begin{align}
			&\|\varphi (t)\|_{D(B_N^{1/2})}^2 + \|\u(t)\|^2 + \int_\tau^t (\|B_N^{3/2}\varphi (s)\|^2 + \|\nabla\u(s)\|^2 )ds \leq (\|\rho\|_{D(B_N^{1/2})}^2 + \|\v\|^2) + C \int_\tau^t \|\U(s)\|_{\V'_{\mathrm{div}}}^2 ds, \label{DP138} \\
			&\int_\tau^t \| A_N \varphi (s)\|^2 + \|\varphi (s) \|^2_{H^3} ds \leq Q_0(\|\rho\|_{D(B_N^{1/2})}^2 + \|\v\|^2)+ C \int_\tau^t \|\U(s)\|_{\V'_{\mathrm{div}}}^2 ds ,\\
			& \int_\tau^t \left( \|\varphi_t (s) \|_{D(B_N)'}^2 + \|\u_t(s)\|_{\V'_{\mathrm{div}}}^2 \right) ds\leq Q_0(\|\rho\|_{D(B_N^{1/2})}^2 + \|\v\|^2)+ C \int_\tau^t \|\U(s)\|_{\V'_{\mathrm{div}}}^2 ds .
		\end{align}
		where $Q_0$ denotes a monotone non-increasing function independent of time and initial data.
	\end{theorem}
	
	\begin{theorem} [Proposition 2.2, \cite{MR3436705}] \label{strongsol}
		If $\rho \in D(B_N), \v \in \V_{\mathrm{div}}$, then  
		the system \eqref{DP5}-\eqref{DP8} admits a unique strong solution and for all $t \in [\tau, T]$ the solution satisfies
		\begin{align}
			&\|\varphi (t)\|_{D(B_N)}^2 + \|\u(t)\|^2_{\V_{\mathrm{div}}} + \int_\tau^t \left( \|A\u(s)\|^2 + \|B_N^2 \varphi(s)\|^2 + \|B_N \bar{\mu}(s)\|^2 \right)ds \no  \\
			& \hspace{3cm}\leq C \left( \|\rho\|^2_{D(B_N)} + \|\u\|^2_{\V_{\mathrm{div}}} \right) + C \int_\tau^t \|\U(s)\|^2 ds, \label{DP100} \\
			&\int_\tau^t \left( \| \varphi_t(s)\|^2 + \|\u_t(s)\|^2 \right)ds \leq C \left( \|\rho\|^2_{D(B_N)} + \|\u\|^2_{\V_{\mathrm{div}}} \right) + C \int_\tau^t \|\U(s)\|^2 ds, \\
			&\| B_N^{3/2} \varphi \|^2  \leq C \left( \|\rho\|^2_{D(B_N)} + \|\u\|^2_{\V_{\mathrm{div}}} \right) + C \int_\tau^t \|\U(s)\|^2 ds.
		\end{align}
	\end{theorem}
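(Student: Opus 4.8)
The plan is to bootstrap regularity from the weak solution already produced by Theorem~\ref{existance}: since a unique weak solution of \eqref{DP5}--\eqref{DP8} together with the basic energy bounds is in hand, it remains to show that the extra regularity $(\rho,\v)\in D(B_N)\times\V_{\mathrm{div}}$ of the data is inherited by $(\varphi,\u)$ and to derive the three displayed estimates; the asserted solution class then follows a posteriori, the strong time-continuity $\varphi\in C([\tau,T];D(B_N))$, $\u\in C([\tau,T];\V_{\mathrm{div}})$ coming from the Lions--Magenes interpolation theorem once $\varphi_t\in L^2(\tau,T;H)$ and $\u_t\in L^2(\tau,T;\G_{\mathrm{div}})$ are known. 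All the a priori computations below would be carried out on the Galerkin approximations built from the eigenbases of $A$ and $B_N$, separating off the conserved mean $\langle\varphi\rangle=\langle\rho\rangle$ so as to work with $\varphi-\langle\rho\rangle\in L^2_0(\Omega)$; the bounds being uniform in the discretization level, one passes to the limit by weak-$*$ compactness and the Aubin--Lions lemma. A key structural point is that the estimates decouple: I would first close the estimates for the $(\varphi,\mu)$ subsystem using only the weak regularity of $\u$, and only then turn to the velocity equation.

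\emph{Step 1: the order-parameter subsystem.} Using $\u\in L^\infty(\tau,T;\G_{\mathrm{div}})\cap L^2(\tau,T;\V_{\mathrm{div}})$ and $\varphi\in L^\infty(\tau,T;D(B_N^{1/2}))\cap L^2(\tau,T;D(B_N)\cap H^3)$ from Theorem~\ref{existance}, I would pair \eqref{DP5} with $\partial_t\mu$. Since $\mu_t=A_N\varphi_t+f'(\varphi)\varphi_t$ this gives
\[
\|\nabla\varphi_t\|^2+\int_\Omega f'(\varphi)\,|\varphi_t|^2+\tfrac12\tfrac{d}{dt}\|\nabla\mu\|^2=-\langle B_1(\u,\varphi),\mu_t\rangle .
\]
The reaction term is absorbed using the one-sided bound $f'(r)\ge-c_0$ (a consequence of $\lim_{|r|\to\infty}f'(r)>0$ in \textbf{(A1)}) together with the fact that $\varphi_t$ has zero mean, so that $\|\varphi_t\|^2\le\epsilon\|\nabla\varphi_t\|^2+C_\epsilon\|\varphi_t\|_{D(B_N^{1/2})'}^2$ with $\|\varphi_t\|_{D(B_N^{1/2})'}\in L^2(\tau,T)$ already known; the transport term on the right is estimated via the operator bounds recalled before \eqref{DP5} and the two-dimensional embeddings $H^1\hookrightarrow L^p$ ($p<\infty$), $H^2\hookrightarrow L^\infty$, which avoids needing $\u$ in $H^2$. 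Gronwall's inequality then yields $\mu\in L^\infty(\tau,T;H^1)$ and $\nabla\varphi_t\in L^2(\tau,T;H)$, hence $\mu\in L^2(\tau,T;H^2)$ from $A_N\mu=-\varphi_t-B_1(\u,\varphi)$. Finally, reading $\mu=A_N\varphi+f(\varphi)$ as the Neumann elliptic problem $A_N\varphi=\mu-f(\varphi)$ and using $\|f(\varphi)\|_{H^j}\le P_j(\|\varphi\|_{H^{\max(j,1)}})$ for $j=0,1,2$ ($P_j$ polynomial, from the growth bounds on $f,f',f''$ and the 2D embeddings), elliptic bootstrapping gives $\varphi\in L^\infty(\tau,T;D(B_N))$, $\varphi\in L^2(\tau,T;H^3\cap H^4)$, the bound on $\int_\tau^T\|B_N^2\varphi\|^2$, and by interpolation between $D(B_N)$ and $D(B_N^2)$ the bound on $\|B_N^{3/2}\varphi\|$, as well as $\varphi_t\in L^2(\tau,T;H)$.

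\emph{Step 2: the velocity estimate.} With $\varphi\in L^\infty(\tau,T;D(B_N))\cap L^2(\tau,T;H^3)$ now available, I would pair \eqref{DP6} with $A\u$. The time and linear terms give $\tfrac12\tfrac{d}{dt}\|\nabla\u\|^2+\|A\u\|^2$, while $b(\u,\u,A\u)$, $\langle B_2(A_N\varphi,\varphi),A\u\rangle$ and $(\U,A\u)$ are bounded by $\tfrac12\|A\u\|^2$ plus a term $g(t)\|\nabla\u\|^2+h(t)$ with $g\in L^1(\tau,T)$ (from the weak bounds, e.g. $g\lesssim\|\u\|^2\|\nabla\u\|^2$) and $h\lesssim\|\U\|^2+\|A_N\varphi\|^2\|\varphi\|_{H^3}\in L^1(\tau,T)$ — now genuinely, since $\|A_N\varphi\|\in L^\infty$ by Step~1 — using the operator estimates, Ladyzhenskaya's and Agmon's inequalities, and Young's inequality. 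Gronwall's inequality gives $\u\in L^\infty(\tau,T;\V_{\mathrm{div}})\cap L^2(\tau,T;D(A))$, whence $\u_t\in L^2(\tau,T;\G_{\mathrm{div}})$ from \eqref{DP6}, and $\int_\tau^T\|B_N\bar\mu\|^2$ follows since $\bar\mu$ equals $B_N^{-1}$-regular data up to the (bounded) $f(\varphi)$ contribution.

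\emph{Conclusion and the main obstacle.} Collecting Steps 1--2, passing to the limit in the Galerkin scheme, and tracking constants yields the three estimates with a right-hand side of the asserted form $C(\|\rho\|_{D(B_N)}^2+\|\v\|_{\V_{\mathrm{div}}}^2)+C\int_\tau^T\|\U(s)\|^2\,ds$ (the constant inherited, possibly in a nondecreasing way, from Theorem~\ref{existance}); uniqueness is already contained in Theorem~\ref{existance}. I expect the genuinely delicate part to be Step~1: controlling $\int_\Omega f'(\varphi)|\varphi_t|^2$ with only a one-sided bound on $f'$, disentangling the mutual dependence of $\|\varphi_t\|$ and $\|A_N\mu\|=\|\Delta\mu\|$, and keeping every $f$-nonlinearity estimate uniform in the growth exponent $m\ge1$ — it is precisely here that the two-dimensionality of $\Omega$, through $H^1\hookrightarrow L^p$ for every finite $p$ and $H^2\hookrightarrow L^\infty$, is indispensable.
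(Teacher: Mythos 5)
First, note that the paper itself contains no proof of this statement: it is quoted verbatim as Proposition~2.2 of \cite{medjo2015optimal} (the bracketed attribution in the theorem header is the ``proof''), so there is nothing in-paper to compare against, and your sketch must be judged against the standard argument in that reference and in \cite{boyer1999mathematical,gal2010asymptotic}. Your overall scaffolding --- Galerkin approximation in the eigenbases of $A$ and $B_N$, separation of the conserved mean, higher-order a priori estimates closed by Gronwall, compactness to pass to the limit, uniqueness inherited from Theorem~\ref{existance} --- is exactly the right one.

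There is, however, a genuine gap in Step~1, namely in the claim that the order-parameter estimate closes ``using only the weak regularity of $\u$''. Testing \eqref{DP5} with $\mu_t$ produces the cross term $\langle B_1(\u,\varphi),\mu_t\rangle$, whose leading part is $\int_\Omega(\u\cdot\nabla\varphi)\,A_N\varphi_t=\int_\Omega\nabla(\u\cdot\nabla\varphi)\cdot\nabla\varphi_t$. To absorb this into the good term $\|\nabla\varphi_t\|^2$ you must control $\|\nabla(\u\cdot\nabla\varphi)\|$, which contains the factor $\|\nabla\u\|_{L^4}\|\nabla\varphi\|_{L^4}$; in two dimensions $\|\nabla\u\|_{L^4}\lesssim\|\nabla\u\|^{1/2}\|A\u\|^{1/2}$, and $\|A\u\|$ is precisely the strong-solution quantity you have deferred to Step~2. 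The alternative pairing $\|\nabla\u\|\,\|\nabla\varphi\|_{L^\infty}\,\|\nabla\varphi_t\|$ fares no better: for a weak solution $\|\nabla\u\|^2$ is only $L^1$ in time while $\|\nabla\varphi\|_{L^\infty}^2$ is only $L^2$ in time, so their product need not be integrable. Hence the two subsystems do \emph{not} decouple with the multiplier $\mu_t$. The standard repair --- and what the cited proof does --- is either (i) to use $B_N\bar{\mu}$ (equivalently $B_N^2\varphi$ up to the $f$-terms) as the multiplier for \eqref{DP5}, for which the transport term is bounded by $\|\u\|_{L^4}\|\nabla\varphi\|_{L^4}\|B_N\bar{\mu}\|$ and genuinely needs only $\u\in L^2(\tau,T;\V_{\mathrm{div}})$, or (ii) to run a single coupled differential inequality, testing \eqref{DP6} with $A\u$ simultaneously so that the offending $\|A\u\|$-factors are absorbed into $\tfrac{\nu}{2}\|A\u\|^2$ on the left. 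The paper's own continuous-dependence result for strong solutions (the theorem establishing \eqref{DP70}) uses exactly the coupled multipliers $B_N^2\varphi$, $B_N^2\bar{\mu}+B_N^2\varphi$ and $A\u$, and is the template to follow here. The remainder of your sketch --- handling $\int_\Omega f'(\varphi)|\varphi_t|^2$ via the one-sided bound on $f'$, the elliptic bootstrap from $A_N\varphi=\mu-f(\varphi)$, Step~2, and the recovery of $\varphi_t,\u_t$ from the equations --- is sound once this is repaired.
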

	Using the techniques in Lemma 3.3 \cite{MR2580516} we can prove the following result.
	\begin{theorem} \label{cont depen}
		Let $(\varphi_i ,\u_i)$ be a weak solution of \eqref{DP5}-\eqref{DP8} corresponding to the initial data \\ $(\varphi_i(\tau),\u_i(\tau)) = (\rho_i, \v_i) \in D(B_N^{1/2}) \times \G_\mathrm{div}, i=1,2$ with same control $\U$.  Then the following estimate holds for $t \in [\tau, T]$,
		\begin{align*}
			&\| \nabla (\varphi_1-\varphi_2)(t)\|^2 + \| (\u_1-\u_2)(t)\|^2 + \int_\tau^t \| \nabla (\u_1-\u_2)(s)\|^2 ds+ \int_\tau^t \|(\varphi_1-\varphi_2)(s)\|_{H^2}ds \\
			& \leq C e^{Lt}(\|\nabla (\rho_1-\rho_2)\|^2 + \| \v_1-\v_2\|^2),
		\end{align*}
		where $C$ and $ L$ are positive constants depending only on the norms of the initial data, on $\Omega$, and the parameters of the problem, but are both independent of time.
	\end{theorem}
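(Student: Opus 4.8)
The plan is to run a Gronwall estimate on the system solved by the differences $\bar\varphi:=\varphi_1-\varphi_2$, $\bar\u:=\u_1-\u_2$, $\bar\mu:=\mu_1-\mu_2$. Subtracting the two copies of \eqref{DP5}--\eqref{DP8} and using the bilinearity of $B$, $B_1$, $B_2$,
\begin{align*}
&\bar\varphi_t+B_1(\bar\u,\varphi_1)+B_1(\u_2,\bar\varphi)+A_N\bar\mu=0,\qquad \bar\mu=A_N\bar\varphi+\bigl(f(\varphi_1)-f(\varphi_2)\bigr),\\
&\bar\u_t+A\bar\u+B(\bar\u,\u_1)+B(\u_2,\bar\u)-B_2(A_N\bar\varphi,\varphi_1)-B_2(A_N\varphi_2,\bar\varphi)=0,
\end{align*}
with $\bar\varphi(\tau)=\rho_1-\rho_2$, $\bar\u(\tau)=\v_1-\v_2$. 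Since $\langle\bar\varphi(t)\rangle=\langle\rho_1-\rho_2\rangle$ is conserved, once the spatial averages of the data coincide the quantities $\|\nabla\bar\varphi\|$, $\|\bar\varphi\|_{H^1}$, $\|A_N\bar\varphi\|$ and $\|\bar\varphi\|_{H^2}$ are mutually equivalent by Poincaré--Wirtinger, and I would track $y(t):=\|\nabla\bar\varphi(t)\|^2+\|\bar\u(t)\|^2$.

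First I would test the $\bar\u$-equation with $\bar\u$ and the $\bar\varphi$-equation with $A_N\bar\varphi$ and add them. Using $b(\u_2,\bar\u,\bar\u)=0$, $\langle A\bar\u,\bar\u\rangle=\|\nabla\bar\u\|^2$, $\langle\bar\varphi_t,A_N\bar\varphi\rangle=\tfrac12\tfrac{d}{dt}\|\nabla\bar\varphi\|^2$, and---after an integration by parts that uses the boundary conditions \eqref{DP21}--\eqref{DP22}---
\begin{align*}
\langle A_N\bar\mu,A_N\bar\varphi\rangle=\|\nabla A_N\bar\varphi\|^2+\langle\nabla(f(\varphi_1)-f(\varphi_2)),\nabla A_N\bar\varphi\rangle,
\end{align*}
one obtains
\begin{align*}
\tfrac12\tfrac{d}{dt}y+\|\nabla\bar\u\|^2+\|\nabla A_N\bar\varphi\|^2=R,
\end{align*}
where $R$ gathers $-b(\bar\u,\u_1,\bar\u)$, $-\langle B_1(\bar\u,\varphi_1)+B_1(\u_2,\bar\varphi),A_N\bar\varphi\rangle$, $\langle B_2(A_N\bar\varphi,\varphi_1)+B_2(A_N\varphi_2,\bar\varphi),\bar\u\rangle$ and $-\langle\nabla(f(\varphi_1)-f(\varphi_2)),\nabla A_N\bar\varphi\rangle$. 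These manipulations are justified for the strong solutions of Theorem~\ref{strongsol} (for which $\bar\varphi_t\in L^2(\tau,T;H)$) and pass to the general case by a density/Galerkin argument.

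The core of the proof is to bound $R$ by $\tfrac12\bigl(\|\nabla\bar\u\|^2+\|\nabla A_N\bar\varphi\|^2\bigr)+g(t)\,y$ with $g\in L^1(\tau,T)$ depending only on $\Omega$, the parameters of the problem and the norms of the data, so that after absorption the $\|\nabla\bar\u\|$ and $\|\nabla A_N\bar\varphi\|$ terms stay on the left. For this I would use the two-dimensional inequalities of Lemmas~\ref{GNI} and \ref{lady}, Agmon's inequality \eqref{agmon}, Poincaré--Wirtinger, the operator estimates \eqref{DP95} and the ones preceding it, the identity $\nabla(f(\varphi_1)-f(\varphi_2))=f'(\varphi_1)\nabla\bar\varphi+(f'(\varphi_1)-f'(\varphi_2))\nabla\varphi_2$ together with \textbf{(A1)} and $\|\varphi_i\|_{L^\infty}\le C\|\varphi_i\|_{H^2}$, and---crucially---the a priori bounds of Theorems~\ref{existance} and \ref{strongsol}, which control $\|\varphi_i\|_{H^1}$ and $\|\u_i\|$ (hence $y$ itself) uniformly in $t$ and place $\|\nabla\u_i\|,\|\varphi_i\|_{H^2},\|\varphi_i\|_{H^3}$ in $L^2(\tau,T)$ (with more for regular data); the uniform boundedness of $\|\nabla\bar\varphi\|$ and $\|\bar\u\|$ is used to lower the effective powers of the solution norms when applying Young's inequality, so that the leftover coefficients land in $L^1_t$. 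For instance, by Lemma~\ref{lady}, $|b(\bar\u,\u_1,\bar\u)|\le C\|\bar\u\|_{L^4}^2\|\nabla\u_1\|\le C\|\bar\u\|\|\nabla\bar\u\|\|\nabla\u_1\|\le\tfrac18\|\nabla\bar\u\|^2+C\|\nabla\u_1\|^2\|\bar\u\|^2$; and, writing $\langle B_2(A_N\varphi_2,\bar\varphi),\bar\u\rangle=\int_\Omega A_N\varphi_2\,(\nabla\bar\varphi\cdot\bar\u)\,dx$ and using Lemma~\ref{lady} on $\nabla\bar\varphi$ and $\bar\u$, the interpolation $\|\bar\varphi\|_{H^2}\le C\|\bar\varphi\|_{H^1}^{1/2}\|\bar\varphi\|_{H^3}^{1/2}$, the elliptic bound $\|\bar\varphi\|_{H^3}\le C\|\nabla A_N\bar\varphi\|$ and Young's inequality, one gets $|\langle B_2(A_N\varphi_2,\bar\varphi),\bar\u\rangle|\le\tfrac18\bigl(\|\nabla A_N\bar\varphi\|^2+\|\nabla\bar\u\|^2\bigr)+g_1(t)\,y$ with $g_1\in L^1(\tau,T)$; the remaining groups of terms in $R$ are treated in the same spirit. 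Collecting everything yields
\begin{align*}
\tfrac{d}{dt}y+\|\nabla\bar\u\|^2+\|\nabla A_N\bar\varphi\|^2\le g(t)\,y,\qquad g\in L^1(\tau,T).
\end{align*}

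Finally, Gronwall's inequality gives $y(t)\le y(\tau)\exp\!\bigl(\int_\tau^t g\,ds\bigr)\le Ce^{Lt}\bigl(\|\nabla(\rho_1-\rho_2)\|^2+\|\v_1-\v_2\|^2\bigr)$, and integrating the differential inequality in time, using $\|\bar\varphi\|_{H^2}^2\le C\|\nabla A_N\bar\varphi\|^2$ together with Hölder in $t$, produces the stated bounds for $\int_\tau^t\|\nabla\bar\u\|^2\,ds$ and $\int_\tau^t\|(\varphi_1-\varphi_2)(s)\|_{H^2}\,ds$. The step I expect to be the main obstacle is the treatment of the coupling/capillary terms $\langle B_2(A_N\bar\varphi,\varphi_1),\bar\u\rangle$, $\langle B_2(A_N\varphi_2,\bar\varphi),\bar\u\rangle$ and $\langle B_1(\u_2,\bar\varphi),A_N\bar\varphi\rangle$: one has to peel off precisely the factors $\|\nabla\bar\u\|$ and $\|\nabla A_N\bar\varphi\|$ for absorption while keeping the leftover time-coefficients integrable, which requires careful bookkeeping of the two-dimensional interpolation exponents and essential use of the regularity and a priori boundedness of $(\varphi_i,\u_i)$ from Theorems~\ref{existance} and \ref{strongsol}.
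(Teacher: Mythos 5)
Your proposal is correct and is exactly the argument the paper intends: the paper gives no proof of Theorem \ref{cont depen} at all, only the pointer to Lemma 3.3 of \cite{gal2010asymptotic}, and that lemma's technique is precisely your energy estimate on the difference system with multipliers $\bar{\u}$ and $A_N\bar{\varphi}$, two-dimensional interpolation to absorb $\|\nabla\bar{\u}\|^2$ and $\|\nabla A_N\bar{\varphi}\|^2$, and Gronwall. Your remark that one needs $\langle\rho_1\rangle=\langle\rho_2\rangle$ (so that Poincar\'e--Wirtinger makes $\|\nabla\bar{\varphi}\|$, $\|A_N\bar{\varphi}\|$ and $\|\bar{\varphi}\|_{H^2}$ mutually comparable) is the one genuine caveat, which the paper's statement silently glosses over.
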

	Now we establish continuous dependence results which will be used in the later sections.

	\begin{theorem} \label{contdep0}
		Let $(\varphi_1, \u_1)$ and $(\varphi_2, \u_2)$ be weak solutions of the system \eqref{DP5}-\eqref{DP8} corresponding to initial data $(\rho_1, \v_1)$ and $(\rho_2,\v_2)$, respectively, where $(\rho_i, \v_i) \in D(B_N^{1/2}) \times \G_{\mathrm{div}}, i=1,2$. Then, for $t \in [\tau, T]$, 
		\begin{align}
			\|(\varphi_1 - \varphi_2)(t) &\|^2+\|(\u_1-\u_2)(t) \|^2_{\V'_{\mathrm{div}}} +\int_\tau^t \|B_N (\varphi_1-\varphi_2)(s) \|^2ds+ \int_\tau^t \|\u_1(s)-\u_2(s) \|^2 ds \no \\
			&\leq C (\|\rho_1-\rho_2\|^2 + \|\v_1-\v_2 \|^2_{\V'_{\mathrm{div}}}). \label{DP62}
		\end{align}
	\end{theorem}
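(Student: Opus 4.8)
The plan is to run a single differential energy inequality for the differences $\varphi:=\varphi_1-\varphi_2$, $\u:=\u_1-\u_2$ (and $\mu:=\mu_1-\mu_2$), obtained by testing the order–parameter equation against $\varphi$ in $H$ and the velocity equation against $A^{-1}\u$. Pairing the velocity equation with $A^{-1}\u$ rather than with $\u$ is exactly what produces the $\V'_{\mathrm{div}}$–norm on the left, since $\langle A\u,A^{-1}\u\rangle=\|\u\|^2$ and, by the Lions–Magenes lemma applied to $A^{-1/2}\u$, $\langle\u_t,A^{-1}\u\rangle=\tfrac12\tfrac{d}{dt}\|\u\|_{\V'_{\mathrm{div}}}^2$; it is also forced on us, because for weak solutions we control $\u$ only in $C([\tau,T];\G_{\mathrm{div}})\cap L^2(\tau,T;\V_{\mathrm{div}})$ and have no a priori bound on $\|\nabla(\u_1-\u_2)\|$. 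First I would subtract the two copies of \eqref{DP5}--\eqref{DP8}; using bilinearity of $B,B_1,B_2$ the difference system reads
\begin{align*}
&\varphi_t+B_1(\u_1,\varphi)+B_1(\u,\varphi_2)+A_N\mu=0,\qquad \mu=A_N\varphi+f(\varphi_1)-f(\varphi_2),\\
&\u_t+A\u+B(\u_1,\u)+B(\u,\u_2)-B_2(A_N\varphi_1,\varphi)-B_2(A_N\varphi,\varphi_2)=0,
\end{align*}
with $(\varphi,\u)(\tau)=(\rho_1-\rho_2,\v_1-\v_2)$. Since $\rho_i\in D(B_N^{1/2})\subset L^2_0(\Omega)$, each $\varphi_i(t)$, hence $\varphi(t)$, has zero average, so $\|B_N\varphi\|=\|A_N\varphi\|$ and $\|\varphi\|_{H^2}\le C\|A_N\varphi\|$ throughout.

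Testing the first equation with $\varphi$, the convective term $\langle B_1(\u_1,\varphi),\varphi\rangle$ vanishes because $\mathrm{div}\,\u_1=0$ and $\u_1|_{\partial\Omega}=0$, and $\langle A_N\mu,\varphi\rangle=\langle\mu,A_N\varphi\rangle=\|A_N\varphi\|^2+\langle f(\varphi_1)-f(\varphi_2),A_N\varphi\rangle$; testing the second with $A^{-1}\u$ and integrating by parts in the trilinear terms ($\mathrm{div}\,\u_1=\mathrm{div}\,\u=0$) one obtains
\begin{align*}
\tfrac12\tfrac{d}{dt}\big(\|\varphi\|^2+\|\u\|_{\V'_{\mathrm{div}}}^2\big)+\|A_N\varphi\|^2+\|\u\|^2
&=-\langle B_1(\u,\varphi_2),\varphi\rangle-\langle f(\varphi_1)-f(\varphi_2),A_N\varphi\rangle\\
&\quad+b(\u_1,A^{-1}\u,\u)+b(\u,A^{-1}\u,\u_2)\\
&\quad+\langle B_2(A_N\varphi_1,\varphi),A^{-1}\u\rangle+\langle B_2(A_N\varphi,\varphi_2),A^{-1}\u\rangle.
\end{align*}
Then I would estimate each term on the right by $\tfrac14\big(\|A_N\varphi\|^2+\|\u\|^2\big)+g(t)\big(\|\varphi\|^2+\|\u\|_{\V'_{\mathrm{div}}}^2\big)$ with $g\in L^1(\tau,T)$. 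The tools are: the $2$-D embeddings $H^1(\Omega)\hookrightarrow L^q(\Omega)$ (all $q<\infty$), Ladyzhenskaya/Gagliardo--Nirenberg and the Neumann elliptic bound above, used in the form $\|\nabla\varphi\|_{L^4}\le C\|A_N\varphi\|^{3/4}\|\varphi\|^{1/4}$, $\|\varphi\|_{L^4}\le C\|A_N\varphi\|^{1/4}\|\varphi\|^{3/4}$, together with — using that $A^{-1}$ gains two derivatives — $\|\nabla A^{-1}\u\|_{L^4}\le C\|\u\|^{1/2}\|\u\|_{\V'_{\mathrm{div}}}^{1/2}$ and $\|A^{-1}\u\|_{L^4}\le C\|\u\|_{\V'_{\mathrm{div}}}$ (the last from \eqref{DP48} applied to $A^{-1}\u$); the a priori bounds of Theorem~\ref{existance}, which give $\|\varphi_i\|_{H^1},\|\u_i\|\in L^\infty(\tau,T)$ and $\|A_N\varphi_i\|,\|\nabla\u_i\|\in L^2(\tau,T)$; and, for the potential, the growth bound from \eqref{f condition}, $|f(\varphi_1)-f(\varphi_2)|\le C(1+|\varphi_1|^m+|\varphi_2|^m)|\varphi|$, which after H\"older, $H^1\hookrightarrow L^{8m}$ and interpolation yields $\|f(\varphi_1)-f(\varphi_2)\|^2\le C\big(1+\|\varphi_1\|_{H^1}^{2m}+\|\varphi_2\|_{H^1}^{2m}\big)\|\varphi\|^{7/4}\|A_N\varphi\|^{1/4}$.

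The hard part is essentially bookkeeping but it must be arranged with care: in each application of Young's inequality the two ``smoothing'' quantities $\|A_N\varphi\|^2$ and $\|\u\|^2$ must receive the small coefficient, while the time–dependent but not small factors — powers of $\|A_N\varphi_i\|$ and $\|\nabla\u_i\|$ with exponents $\le 2$ (hence in $L^1(\tau,T)$) and $L^\infty$-in-time powers of $\|\varphi_i\|_{H^1}$, $\|\u_i\|$ — must be attached to $\|\varphi\|^2$ or $\|\u\|_{\V'_{\mathrm{div}}}^2$, never to $\|A_N\varphi\|^2$ or $\|\u\|^2$. This redistribution is possible precisely because each interpolation bound above carries a genuine positive power of the controlled quantity ($\|\varphi\|^{1/4}$, respectively $\|\u\|^{1/2}$) multiplying a subcritical power of $\|A_N\varphi\|$, respectively $\|\u\|$. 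For instance $|b(\u_1,A^{-1}\u,\u)|\le C\|\u_1\|_{L^4}\|\nabla A^{-1}\u\|_{L^4}\|\u\|\le C\|\u_1\|^{1/2}\|\nabla\u_1\|^{1/2}\|\u\|^{3/2}\|\u\|_{\V'_{\mathrm{div}}}^{1/2}\le\tfrac{1}{16}\|\u\|^2+C\|\u_1\|^2\|\nabla\u_1\|^2\|\u\|_{\V'_{\mathrm{div}}}^2$, and $|\langle B_2(A_N\varphi_1,\varphi),A^{-1}\u\rangle|\le\|A_N\varphi_1\|\,\|\nabla\varphi\|_{L^4}\,\|A^{-1}\u\|_{L^4}\le C\|A_N\varphi_1\|\,\|A_N\varphi\|^{3/4}\|\varphi\|^{1/4}\|\u\|_{\V'_{\mathrm{div}}}\le\tfrac{1}{16}\|A_N\varphi\|^2+C\|A_N\varphi_1\|^{8/5}\big(\|\varphi\|^2+\|\u\|_{\V'_{\mathrm{div}}}^2\big)$; the remaining terms $\langle B_1(\u,\varphi_2),\varphi\rangle$, $b(\u,A^{-1}\u,\u_2)$, $\langle B_2(A_N\varphi,\varphi_2),A^{-1}\u\rangle$ and $\langle f(\varphi_1)-f(\varphi_2),A_N\varphi\rangle$ are handled in the same spirit. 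Summing all these bounds and absorbing $\tfrac14(\|A_N\varphi\|^2+\|\u\|^2)$ into the left-hand side gives
\begin{align*}
\tfrac{d}{dt}\big(\|\varphi\|^2+\|\u\|_{\V'_{\mathrm{div}}}^2\big)+\|A_N\varphi\|^2+\|\u\|^2\le 2g(t)\big(\|\varphi\|^2+\|\u\|_{\V'_{\mathrm{div}}}^2\big),\qquad g\in L^1(\tau,T),
\end{align*}
and Gr\"onwall's inequality followed by integration over $[\tau,t]$ — with $\int_\tau^T g$ finite and bounded in terms only of $\Omega$, the parameters, $T$ and the norms of the initial data through Theorem~\ref{existance} — yields \eqref{DP62}, after replacing $\|A_N\varphi\|$ by $\|B_N\varphi\|$.
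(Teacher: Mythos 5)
Your proposal is correct and follows essentially the same route as the paper: test the difference of the order-parameter equations with $\varphi_1-\varphi_2$ and the difference of the velocity equations with $A^{-1}(\u_1-\u_2)$, split the bilinear terms, absorb $\|A_N\varphi\|^2$ and $\|\u\|^2$ via Young's inequality with the $L^1$-in-time coefficients from Theorem~\ref{existance} attached to $\|\varphi\|^2+\|\u\|_{\V'_{\mathrm{div}}}^2$, and conclude by Gr\"onwall. The only notable difference is that your treatment of $\langle f(\varphi_1)-f(\varphi_2),A_N\varphi\rangle$ via the growth condition \eqref{f condition} and interpolation is more careful than the paper's, which asserts the bound $\|f(\varphi_1)-f(\varphi_2)\|\le C_f\|\varphi_1-\varphi_2\|$ without justification.
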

	\begin{proof}
		Let us denote by $\varphi= \varphi_1 - \varphi_2$ and $\u = \u_1-\u_2$. Then $(\varphi, \u)$ satisfies the following system.
		\begin{align}
			\varphi_t + B_1(\u ,\varphi_1) + B_1(\u_2 ,\varphi) &= -A_N\mu, \label{DP31} \\
			\mu &= B_N\varphi + ( f(\varphi_1)-f(\varphi_2)) ,\\
			\u_t + A \u + B(\u_1,\u_1)-B(\u_2,\u_2)& = B_2(B_N \varphi , \varphi_1)+B_2(B_N \varphi_2, \varphi). \label{DP32}
		\end{align}
		Taking inner product of \eqref{DP31} with $\varphi$ and \eqref{DP32} with $A^{-1}\u$, using the properties of operators $b, b_1$ and $b_2$, and adding we get, for $ t \in [\tau, T] $,
		\begin{align}
			\frac{1}{2} \frac{d}{dt} (\|\varphi \|^2 &+ \|\u \|_{\V'_{\mathrm{div}}}^2) + \|\u\|^2 = -b_1(\u ,\varphi_1, \varphi) - (A_N\mu, \varphi)-b(\u, \u_1, A^{-1}\u)\no  \\
			&-b(\u_2,\u, A^{-1}\u)+b_2(B_N \varphi, \varphi_1, A^{-1}\u)+b_2(B_N\varphi_2, \varphi, A^{-1}\u).  \label{DP60}
		\end{align}
		We estimate the terms in \eqref{DP60} as follows. Observe that 
		\begin{align}
			-(A_N \mu, \varphi) = -(A_N(B_N\varphi + f(\varphi_1)-f(\varphi_2)),\varphi) = -\| B_N\varphi\|^2 - (A_N (f(\varphi_1)-f(\varphi_2)), \varphi). \label{DP39}
		\end{align}
		and using \eqref{f condition2}, we get
		\begin{align} \no
			|(A_N (f(\varphi_1)-f(\varphi_2)), \varphi)| &\leq \|f(\varphi_1)-f(\varphi_2)  \| \| B_N \varphi\| \\
			%\leq \frac{1}{4} \|B_N \varphi\|^2 + C_f \|\varphi\|^2 \\
			& \leq C\| B_N \varphi\| \|(1 + |\varphi_1|^r + |\varphi_2|^r) \varphi \| \no \\
			&\leq \frac{1}{2}\| B_N \varphi\|^2 + C (1+ \|\varphi_1\|^{2r}_{H^2} + \|\varphi_2\|^{2r}_{H^2}) \|\varphi\|^2
		\end{align}
		Using H\"older's inequality and Agmon's inequality \eqref{agmon}, we get 
		\begin{align}
			|b_1(\u, \varphi_1, \varphi)| &\leq \|\u\| \|\nabla \varphi_1\|_{L^\infty} \|\varphi\| \no \\
			& \leq C\|\u\| \|\nabla \varphi_1\|^{1/2} \|\nabla \varphi_1 \|_{H^2}^{1/2} \|\varphi\| \no \\
			%& \leq \frac{1}{6}\|\u\|^2+C \|\nabla \varphi_1\| \|\varphi_1 \|_{H^3} \|\varphi\|^2 \no \\
			& \leq \frac{1}{6}\|\u\|^2+C\|\varphi\|^2 (\|\nabla \varphi_1\|^2 + \|\varphi_1 \|_{H^3}^2 ),
		\end{align}
		and
		\begin{align}
			|b(\u,\u_1, A^{-1}\u)| = |b(\u,A^{-1}\u, \u_1)|&\leq \|\u\| \|A^{-1/2}\u\|_{L^4}\|\u_1\|_{L^4} \no  \\
			& \leq \|\u\| \| A^{-1/2}\u\|^{1/2} \|\u\|^{1/2} \|\nabla \u_1\|^{1/2}\|\u_1\|^{1/2} \no \\
			& \leq \frac{1}{6} \|\u\|^2 + C \| \u\|_{\V'_{\mathrm{div}}}^2 \|\nabla \u_1\|^2\|\u_1\|^2.
		\end{align}
		Similarly, 
		\begin{align}
			|b(\u_2, A^{-1}\u,\u)| & \leq \frac{1}{6}\|\u\|^2 + C \|A^{-1/2}\u\|^2 \|\nabla \u_2\|^2 \|\u_2\|^2,
		\end{align}
		\begin{align}
			|b_2(B_N \varphi, \varphi_1, A^{-1}\u)| &\leq \| B_N \varphi\| \|\nabla \varphi_1\|_{L^4} \|A^{-1}\u\|_{L^4} \no \\
			& \leq \frac{1}{4} \|B_N \varphi\|^2 + C \|\varphi_1\|^2_{H^3} \|\u\|_{\V'_{\mathrm{div}}}^2,
		\end{align}
		By Poincare inequality since, $\bar{\varphi}=0$, and \eqref{DP48}, we can estimate the following
		\begin{align}
			|b_2(B_N \varphi_2, \varphi, A^{-1}\u)| &\leq \|B_N\varphi_2\|_{L^4}\|\nabla \varphi\|\|A^{-1}\u\|_{L^4} \no \\
			&\leq \|B_N\varphi_2\|^{1/2} \|B_N \varphi_2\|_{H^1}^{1/2} \|B_N\varphi\| \|\u\|_{\V'_{\mathrm{div}}} \no  \\
			&\leq \frac{1}{4} \|B_N\varphi\|^2 + C\|\u\|_{\V'_{\mathrm{div}}}^2 \|\varphi_2\|_{H^3}^2 \label{DP61}.
		\end{align}
		%We write, using integration by parts  
		%\begin{align*}
		%\int_\Omega A_N \varphi_2 (\nabla \varphi \cdot A^{-1}\u) &= -\int_\Omega (A_N^{3/2} \varphi_2) \varphi A^{-1}\u - \int_\Omega (A_N\varphi_2) \varphi \cdot A^{-1/2}\u \\
		%& = I_1 + I_2
		%\end{align*}
		%From Gagliardo Nirenberg inequality we get
		%\begin{align}
		%|I_1| &\leq \|A_N^{3/2} \varphi_2\|_{L^4} \|\varphi\| \|A^{-1}\u\|_{L^4} \no \\
		%&\leq C\|A_N^{3/2} \varphi_2\|^{1/2} \|A_N^2 \varphi_2\|^{1/2}\|\varphi\| \|A^{-1/2}\u\| \no \\
		%& \leq \|A^{-1/2}\u\|^2 + C\|A_N^{3/2} \varphi_2\|\|A_N^2 \varphi_2\|\|\varphi\|^2 \no \\
		%& \leq \|A^{-1/2}\u\|^2 + C\|\varphi\|^2 (\|A_N^{3/2} \varphi_2\|^2+\|A_N^2 \varphi_2\|^2),
		%\end{align}
		%and from Agmon's inequality
		%\begin{align}
		%|I_2| &\leq \|A_N\varphi_2\|_{L^\infty} \|\varphi \| \|A^{-1/2}\u\| \no \\
		%& \leq \|A_N\varphi_2\|^{1/2} \|A_N \varphi_2 \|_{H^2}^{1/2}\|\varphi \| \|A^{-1/2}\u\| \no \\
		%& \leq \|A^{-1/2}\u\|^2 + \|\varphi\|^2 \|A_N\varphi_2\| \|\varphi_2 \|_{H^4} \no \\
		%& \leq \|A^{-1/2}\u\|^2 + \|\varphi\|^2 (\|A_N\varphi_2\|^2 + \|\varphi_2 \|_{H^4}^2) , 
		%\end{align}
		Substituting \eqref{DP39}-\eqref{DP61} in \eqref{DP60} , we get
		\begin{align}
			\frac{1}{2} \frac{d}{dt} (\|\varphi (t) \|^2 &+ \|\u (t)\|_{\V'_{\mathrm{div}}}^2) + \frac{1}{2}\|\u(t)\|^2 + \frac{1}{2} \|B_N \varphi(t)\|^2 \leq \alpha_1(t) \|\varphi(t)\|^2 +  \beta_1(t)\|\u(t)\|_{\V'_{\mathrm{div}}}^2 
		\end{align}
		where $\alpha_1(t) = C( 1 + \|\nabla \varphi_1(t)\|^2 + \|\varphi_1(t)\|_{H^3}^2 + \|\varphi_1\|^{2r}_{H^2} +\|\varphi_2\|^{2r}_{H^2}  )$ and $\beta_1(t) = C(\|\nabla \u_1(t)\|^2 \|\u_1(t)\|^2 + \|\nabla \u_2(t)\|^2 \|\u_2(t)\|^2 + \|\varphi_1(t)\|^2_{H^2}+ \|\varphi_2(t)\|_{H^3}^2)$. Observe that, since $(\varphi_1, \u_1)$ and $(\varphi_2, \u_2)$ are weak solutions, from Theorem \ref{existence} we have $\alpha_1(\cdot), \beta_1(\cdot) \in L^1(0,T)$. By applying Gronwall's lemma, we get the required result, namely \eqref{DP62}.
	\end{proof}

\section{Dynamic Programming Principle} \label{sec DPP}
In this section, we formulate the optimal control problem as a minimization of a suitable cost functional subject to the controlled Cahn-Hilliard-Navier-Stokes system and describe the dynamic programming principle. We consider the finite-horizon problem. Similar results can be obtained in the case of the infinite-horizon problem. For the finite-horizon case for a fixed $ T>0 $ and $ 0 \leq \tau <T $, we want to minimize the cost functional

\begin{align}
	\mathcal{J}(\tau, \rho, \v; \U) = \int_\tau^T l(s, \varphi (s), \u(s), \U(s)) ds + g(\varphi (T), \u(T)).
\end{align}
where $ (\varphi, \u) $ is the solution of \eqref{DP5}-\eqref{DP8} with control $\U \in \mathcal{U}_{ad} $ and $ \mathcal{U}_{ad}$ is a closed and convex subset of $L^2(\tau, T; \G_{\mathrm{div}})$ containing $\mathbf{0}$. Observe that $\mathcal{U}_{ad}$ is non-empty.

%:= \{ \U \in \G_{\mathrm{div}} \, : \, \|\U\| \leq R, \ \mathrm{for %\ some \ } R>0\}$.
We formulate the optimal control problem as the minimization of the cost $ \mathcal{J} $, i.e.,
\begin{align}
\inf_{\U \in \mathcal{U}_{ad}} \mathcal{J}(\tau,  \rho,\mathbf{v}, \mathrm{U}) \label{DP44}
\end{align}
A solution to the problem \eqref{DP44} is called an optimal control and $(\varphi^*, \u^*, \U^*)$ is called optimal triplet where $(\varphi^*, \u^*)$ is the strong solution of the system \eqref{DP5}-\eqref{DP8} with optimal control $\U^*$ and initial data $(\varphi(\tau), \u(\tau))=(\rho,\v)$. 
%The following theorem gives the existence of the optimal control problem. 
%\begin{theorem} [Theorem 3.1, \cite{MR4190792}]
%Let $ (\rho, \v) \in D(B_N^{1/2}) \times \G_{\mathrm{div}}$.	The optimal control problem \eqref{DP44} admits a solution.
%\end{theorem}
We consider the optimal control problem \eqref{DP44} with $\mathcal{U}_{ad} = L^2 (\tau , T; \mathcal{U}_R)$, where 
$$\mathcal{U}_R := \{ \U \in \G_{\mathrm{div}} \ : \ \|\U\| \leq R \}.$$
%Moreover, we take $(\varphi(\tau), \u(\tau)) \in (D(B_N^{1/2}) \cap H^3) \times \G_{\mathrm{div}}$.
Let us define a value function $\mathcal{V} : [0,T] \times D(B_N^{1/2}) \times \G_{\mathrm{div}} \rightarrow \mathbb{R}$ as
\begin{align} \label{valuefn}
\mathcal{V} (\tau, \rho, \mathbf{v}) = \inf_{\mathrm{U} \in \mathcal{U}_{ad}} \mathcal{J}(\tau, \rho, \mathbf{v}, \mathrm{U}).
\end{align}
We assume the following conditions on $l$ and $g$.
\begin{assumption} \label{l_assumption}
There exist families of moduli of $\sigma_K$, a constant $L_K$ for $K>0$ and, $k\geq 0$ such that 
\begin{itemize}
\item[(i)] $l : (0,T) \times D(B^{1/2}_N) \times \G_{\mathrm{div}} \times \mathcal{U}_{ad} \rightarrow \R$ is continuous such that
\begin{align} \label{DP146}
|l(t, \rho_1, \v_1 ,\U) - l(s, \rho_2, \v_2, \U)| \leq  \sigma_K(|t-s|)+ L_K(\|\nabla(\rho_1 - \rho_2)\| + \|\v_1-\v_2\|),
\end{align}
\begin{align} \label{DP132}
|l(t, \rho, \v, \U)| \leq C (1 + \|\rho\|_{D(B_N)}^k + \|\v\|_{\V_{\mathrm{div}}}^k),
\end{align}
for $\|\rho_1\|_{D(B^{1/2}_N)},\| \rho_2\|_{D(B^{1/2}_N)},\|\rho\|_{D(B_N)}, \|\v_1\|, \|\v_2\|, \|\v\| \leq K$.
\item[(ii)] $g : D(B^{1/2}_N) \times \G_{\mathrm{div}} \rightarrow \R$ is continuous such that
\begin{align*}
|g(\rho_1,\v_1) -g(\rho_2, \v_2)| \leq L_K(\|\rho_1 - \rho_2 \| + \|\v_1 - \v_2 \|_{\V_{\mathrm{div}}'}),
\end{align*}
for $\|\rho_1\|_{D(B^{1/2}_N)},\|\rho_2\|_{D(B^{1/2}_N)}, \|\v_1\|, \|\v_2\| \leq K$, and
\begin{align} \label{DP143}
|g(\rho, \v)| \leq C (1 + \| \nabla \rho\|^k + \|\v\|^k).
\end{align}
\end{itemize}
\end{assumption}

\subsection{Continuity properties of a value function}
In this section, we present continuity properties of the value function \eqref{valuefn}. Before that,  we prove the following propositions which give the continuous dependence on time which will be used in proving the smoothness properties of the value function and also while proving it to be a viscosity solution.
\begin{proposition} \label{DP130}
Let$(\rho, \v) \in D(B_N^{1/2})\times \G_{\mathrm{div}} $ and $\U \in  \mathcal{U}_{ad}$. Let $(\varphi, \u)$ be the solution corresponding to the initial data $(\varphi(\tau), \u(\tau))=(\rho, \v)$. Then the following estimates hold for $t \in [\tau, T]$:
\begin{align} \label{DP125}
\|\varphi(t)-\rho\|_{[D(B_N^{1/2})]'}^2 + \| \u(t)-\v\|^2_{\V_{{\mathrm{div}}}'}  \leq C(t-\tau).
\end{align}
\begin{align} \label{DP129}
\int_\tau^t (\|\varphi(s)-\rho\| _{D(B_N^{1/2})}^2 + \| \u(s)-\v \|^2 ) ds \leq C (t-\tau)
\end{align}
\end{proposition}
\begin{proof}
Let $(\varphi, \u)$ be the  solution of the system \eqref{DP5}-\eqref{DP8} with control $\U$ and initial data \\ $(\varphi(\tau), \u(\tau))= (\rho, \v) $. Let us denote $\xi = \varphi(t)-\rho$ and $\z=\u(t)-\v$. Then $(\xi, \z)$ satisfies
\begin{align}  \label{DP119}
\frac{d \xi}{dt} +B_1(\z, \varphi)+B_1(\v ,\varphi)+ A_N\mu&=0,\\
\mu &= B_N \xi + f(\varphi) +B_N \rho,  \\
\frac{d\z}{dt} +  A\z + A \v +B(\z,\u)+B(\v,\u) & =B_2(B_N \varphi, \varphi) +\U. \label{DP120}
\end{align}
Now take $ L^2 $ inner product of \eqref{DP119} with $B_N^{-1} \xi$, and \eqref{DP120} with $A^{-1}\z$. By adding we get 
\begin{align} \label{DP123}
& \frac{1}{2} \frac{d}{dt} (\|B_N^{-1/2}\xi \|^2 +\|\z\|_{\V_{{\mathrm{div}}}'}^2)  +  \|\z\|^2   = -(B_1(\z, \varphi), B_N^{-1} \xi ) -(B_1(\v,\varphi), B_N^{-1} \xi) -(A_N\mu, B_N^{-1}\xi) \no  \\
&-(\v,\z)-b(\z,\u,A^{-1}\z)-b(\v,\u,A^{-1}\z)+(B_2(B_N \varphi, \varphi),A^{-1}\z) +(\U,A^{-1}\z).
\end{align}
Now we estimate the right-hand side terms as follows. Observe that
\begin{align} \label{DP121}
-(A_N\mu, B_N^{-1}\xi) = -(\mu, \xi) &= -\|\nabla \xi\|^2 -(f(\varphi), \xi) -(\nabla \rho, \nabla \xi) \no \\
& \leq - \frac{1}{2} \|\nabla \xi \|^2 + \|f(\varphi)\| \|\nabla \xi \|  + \frac{1}{2} \|\nabla \rho\|^2 \no  \\
& \leq - \frac{1}{4} \|\nabla \xi \|^2 + C(1+ \|\varphi\|_{H^1}^{2(r+1)}) +  \frac{1}{2} \|\nabla \rho\|^2 
\end{align}	
where we used \eqref{f condition3} and Sobolev inequality. Similarly, we estimate,
\begin{align}
|(B_1(\z, \varphi), B_N^{-1} \xi )| &=|(\z \cdot \nabla \varphi,B_N^{-1} \xi )| = |(\z \cdot \nabla B_N^{-1} \xi , \varphi)| \no \\
&\leq \|\z\| \|B_N^{-1/2} \xi\| \|\varphi\|_{L^\infty} \no \\
&\leq \frac{1}{8} \|\z\|^2 + C \|\varphi\|_{H^2}^2 \|B_N^{-1/2} \xi\|^2,
\end{align}
\begin{align}
|(B_1(\v,\varphi), B_N^{-1} \xi)| &= |(\v \cdot \nabla B_N^{-1/2} \xi, \varphi)| \no \\
& \leq \frac{1}{2} \|\v\|^2 + C \|\varphi\|_{H^2}^2 \|B_N^{-1/2} \xi\|^2,
\end{align}

\begin{align}
|(\v,\z) | \leq \|\v\| \|\z\| \leq \frac{1}{8} \|\z\|^2 + C \|\v\|^2,
\end{align}
\begin{align}
|b(\z,\u,A^{-1}\z)| &= |b(\z, A^{-1}\z, \u)| \no \\ 
& \leq \|\z\|\|A^{-1/2} \z\|_{\mathbb{L}^4} \|\u\|_{\mathbb{L}^4} \no \\
& \leq \|\z\| \|A^{-1/2} \z\|^{1/2} \|\z\|^{1/2} \|\u\|^{1/2} \|\nabla \u\|^{1/2} \no \\
& \leq \frac{1}{8} \|\z\|^2 + C \|\u\|^2 \|\nabla \u\|^2  \| \z\|^2_{\V_{{\mathrm{div}}}'},
\end{align}
\begin{align}
|b(\v,\u,A^{-1}\z) | &\leq C \|\v\| \|A^{-1/2} \z\|^{1/2} \|\z\|^{1/2} \|\nabla \u\| \no \\
& \leq \frac{1}{6} \|\z\|^2 + C  \|\v\|^{4/3} \|A^{-1/2} \z\|^{2/3}  \|\nabla \u\|^{4/3} \no \\
& \leq \frac{1}{8} \|\z\|^2 + \frac{1}{2} \| \z\|^2_{\V_{{\mathrm{div}}}'} + C \|\v\|^2 \|\nabla \u\|^2,
\end{align}
\begin{align}
|(B_2(B_N \varphi, \varphi),A^{-1}\z)| &\leq \|B_N \varphi\|_{L^4} \|\nabla \varphi\|\| A^{-1}\z\|_{L^4} \no \\ 
& \leq C\|\varphi\|_{H^3} \|\varphi\|_{H^1} \|\z\|_{\V_{{\mathrm{div}}}'} \no \\
&  \leq \frac{1}{2} \| \varphi\|_{H^1}^2 + C \|\varphi\|_{H^3}^2 \|\z\|_{\V_{{\mathrm{div}}}'}^2,
\end{align}
\begin{align}  \label{DP122}
|(\U,A^{-1}\z)| &\leq \|\U\| \|A^{-1}\z\| \no \\
& \leq \frac{1}{2} \|\U\|^2 + C \|\z\|^2_{\V_{{\mathrm{div}}}'}.
\end{align}
Substituting \eqref{DP121}-\eqref{DP122} in \eqref{DP123}, we get 
\begin{align}
\frac{1}{2} \frac{d}{dt} (\|B_N^{-1/2}\xi \|^2 +\|\z\|_{\V_{{\mathrm{div}}}'}^2)  + \frac{1}{2} \|\z\|^2 + \frac{1}{2} \|\nabla \xi\|^2 \leq C(1+ \|\v\|^2 + \|\nabla \rho\|^2 + \|\U\|^2 + \|\varphi\|_{H^1}^{2(r+1)}) \no \\
+ C(1+\|\varphi\|^2_{H^3} + \|\u\|^2\|\nabla\u\|^2 + \|\v\|^2\|\nabla\u\|^2) (\| B_N^{-1/2}\xi\|^2 + \|\z\|^2_{\V_{{\mathrm{div}}}'}).\label{DP124}
\end{align} 
By integrating \eqref{DP124} and applying Gronwall's lemma we get 
\begin{align}
\|\varphi(t)-\rho\|_{[D(B_N^{1/2})]'}^2 + \| \u(t)-\v\|^2_{\V_{{\mathrm{div}}}'}  \leq C(t-\tau). \label{DP128}
\end{align}
By substituting \eqref{DP128} in \eqref{DP124} upon integrating from $ \tau $ to $ t $, we get \eqref{DP129}.
\end{proof}
\begin{proposition} \label{DP126}
Let$(\rho, \v) \in D(B_N^{1/2}) \times \G_{\mathrm{div}} $ and $\U \in  \mathcal{U}_{ad}$. Let $(\varphi, \u)$ be the strong solution corresponding to the initial data $(\varphi(\tau), \u(\tau))=(\rho, \v)$. Then there exists a modulus of continuity $ \omega $ independent of control $\U$ such that for $t \in [\tau, T]$,
\begin{align} \label{DP127}
\|\nabla (\varphi(t)-\rho)\| + \| \u(t)-\v\| \leq \omega (t-\tau).
\end{align}
\end{proposition}
\begin{proof}
We want to show that
\begin{align} \label{DP113}
\sup_{t \in [\tau, \tau + \epsilon], \U \in \mathcal{U}_{ad}} (\|\nabla (\varphi(t)-\rho)\|+\| \u(t) - \v \| )\rightarrow 0 \ \mathrm{as} \ \epsilon \rightarrow 0 .
\end{align}
We prove it by contradiction. Suppose that \eqref{DP113} is false. Then, at least one of the terms $ \|\nabla (\varphi(t)-\rho)\| $ or $ \| \u(t) - \v \|$ does not go to zero as $ \epsilon \rightarrow 0 $. Without loss of generality, we assume that there exists a sequence $t_n$ and $ \U_n $ such that 
\begin{align} \label{DP114}
\| \u_n(t_n) - \v\| \geq \epsilon \quad \mathrm{as} \quad t_n \rightarrow \tau.
\end{align}
where $ (\varphi_n, \u_n) $ is the strong solution of the system \eqref{DP5}-\eqref{DP8} with control $ \U_n $. However, by \eqref{DP125} we have 
\begin{align*}
&\varphi_n (t_n) \rightarrow \rho \quad \mathrm{strongly \ in} \ D(B_N^{1/2})', \\
&\u_n(t_n) \rightarrow \v \quad \mathrm{strongly\ in} \ \V'_{\mathrm{div}}.
\end{align*} 
By Theorem \ref{existance}, we have the uniform boundedness of $ \| \u_n(t_n)\|$ and $ \| \nabla  \varphi_n(t_n)\| $, and hence by uniqueness of limits as $t_n \rightarrow \tau$
\begin{equation}\label{DP117}
\begin{aligned} 
\nabla \varphi_n (t_n) \rightarrow \nabla \rho \quad \mathrm{weakly \ in} \ H, \\
\u_n(t_n) \rightarrow \v \quad \mathrm{weakly \ in} \ \G_{\mathrm{div}},
\end{aligned}
\end{equation}
and 
\begin{equation}\label{DP115}
\begin{aligned}
\|\nabla \rho\| \leq \liminf_{n \rightarrow \infty} \|\nabla \varphi_n(t_n)\|, \\
\|\v\| \leq \liminf_{n \rightarrow \infty} \|\u_n(t_n)\|.
\end{aligned}
\end{equation}
	%	\begin{align} 
		%	\|B_N\rho\| \leq \liminf_{n \rightarrow \infty} \|B_N\varphi(t_n)\|. \\
		%	\|\v\|_{\V_{\mathrm{div}}} \leq \liminf_{n \rightarrow \infty} \|\u(t_n)\|_{\V_{\mathrm{div}}}.
		% 	\end{align}
From \eqref{DP138}, we can also conclude that
\begin{align} \label{DP116}
\|\nabla  \rho\|^2+\|\v\|^2 \geq \limsup_{n \rightarrow \infty} (\|\nabla  \varphi_n(t_n)\|^2 + \|\u_n(t_n)\|^2).
\end{align}
Combining \eqref{DP115} and \eqref{DP116} we get the convergence of the norms, i.e.,
\begin{align} \label{DP118}
\|\nabla \varphi_n(t_n)\|^2 +	\|\u_n(t_n)\|^2 \rightarrow \|\nabla \rho\|^2 + \|\v\|^2.
\end{align}
Hence, using \eqref{DP117} and \eqref{DP118}, we see that 
\begin{align*}
\u_n({t_n}) \rightarrow \v \quad \mathrm{strongly \ in} \ \G_{\mathrm{div}},
\end{align*}
which gives a contradiction to \eqref{DP114}. 
\end{proof}

Assuming the higher regularity for the initial data we can improve the regularity of the terms appearing in the above estimate as proved in the next proposition.

\begin{proposition} \label{timedep1}
Let$(\rho, \v) \in D(B_N)\times \V_{\mathrm{div}} $ and $\U \in  \mathcal{U}_{ad}$. Let $(\varphi, \u)$ be the strong solution corresponding to the initial data $(\varphi(\tau), \u(\tau))=(\rho, \v)$. Then the following holds for $t \in [\tau, T]$:
\begin{align} \label{DP74}
\|\varphi(t)-\rho\|^2 + \| \u(t)-\v\|^2  \leq C(t-\tau).
\end{align}
\end{proposition}

\begin{proof}
Let $(\varphi, \u)$ be the strong solution of the system \eqref{DP5}-\eqref{DP8} with control $\U$ and initial data $(\varphi(\tau), \u(\tau))= (\rho, \v) $. Let us denote $\xi = \varphi(t)-\rho$ and $\z=\u(t)-\v$. Then $(\xi, \z)$ satisfies
	\begin{align} 
		\frac{d \xi}{dt} +B_1(\z, \varphi)+B_1(\v ,\varphi)+ A_N\mu&=0, \label{DP15}\\
		\mu &= B_N \xi + f(\varphi) +B_N \rho, \label{DP16} \\
		\frac{d\z}{dt} + A\z + A \v +B(\z,\u)+B(\v,\z)+B(\v,\v) & =B_2(B_N \xi, \varphi) + B_2(B_N\rho,\varphi)+\U. \label{DP17}
	\end{align}
	%	Equivalently,
	%	\begin{align} \label{DP15}
		%	\frac{d \xi}{dt} +B_1(\z, \varphi)+B_1(\v ,\varphi)+ B_N\tilde{\mu}&=0,\\
		%	\tilde{\mu} &= B_N \xi + f(\varphi) +B_N \rho - \bar{\mu}  \label{DP16}, \\
		%	\frac{d\z}{dt} + \nu A\z  +\nu A \v +B(\z,\u)+B(\v,\z)+B(\v,\v) &=B_2(B_N \xi, \varphi) + B_2(B_N\rho,\varphi)+\U. \label{DP17}
		%	\end{align}
	Now take $ L^2 $-inner product of \eqref{DP15} with $ \xi$, \eqref{DP16} with $ B_N \xi$ and \eqref{DP17} with $\z$. By adding we get 
	\begin{align}
		& \frac{1}{2} \frac{d}{dt} (\| \xi \|^2 +\|\z\|^2) + \|B_N \xi\|^2 + \|\nabla \z\|^2\no \\
		& = -(B_1(\v,\varphi),  \xi)  -(B_1(z, \varphi), \xi) -(f(\varphi),B_N \xi) -(B_N\rho,B_N \xi)\no \\
		& \quad -(A \v,\z) -(B(\z,\u),\z)-(B(\v,\u),\z)+(B_2(B_N\varphi, \varphi),\z)  +(\U,\z). \label{DP18}
	\end{align}
	Now we estimate the right-hand side terms one by one using H\"older's, Young's, Gagliardo-Nirenberg and Poincare's inequalities
	\begin{align}
		|(B_1(\v,\varphi),  \xi)| &\leq C \|\v\|_{\L^4} \|\nabla \varphi\|_{L^4} \| \xi\| \no \\
		& \leq \frac{1}{2}\|\nabla \v\|^2 + C \|\nabla \varphi\|_{L^4}^2 \| \xi\|^2\no \\
		&\leq \frac{1}{2}\|\nabla \v\|^2 + C \|\varphi\|_{H^2}^2 \| \xi\|^2	, \label{DP71}
	\end{align}
	\begin{align}
		|(B_1(\z,\varphi), \xi)| &\leq C \|\z\|_{\L^4} \|\nabla \varphi\|_{L^4} \| \xi\| \no \\
		& \leq \frac{1}{12}\|\nabla \z\|^2 + C \|\nabla \varphi\|_{L^4}^2 \| \xi\|^2\no \\
		&\leq \frac{1}{12}\|\nabla \z\|^2 + C \|\varphi\|_{H^2}^2 \| \xi\|^2	,
	\end{align}
	%	\begin{align}
		%	|(\tilde{\mu},B_N \xi) |= |(\nabla \tilde{\mu}, \nabla \xi)| \leq \frac{1}{4} \|\nabla \tilde{\mu}\|^2 + C \|\nabla \xi \|^2,
		%	\end{align}
	%	\begin{align}
		%	|(f(\varphi),B_N \tilde{\mu})| &= |(\nabla f(\varphi),\nabla \tilde{\mu})|  \leq \frac{1}{4} \|\nabla \tilde{\mu}\|^2 + C \|\nabla \varphi \|^2,
		%	\end{align}
	%	\begin{align}
		%	|(B_N\rho,B_N \tilde{\mu} )| &\leq |(B_N^{3/2}\rho,\nabla \tilde{\mu} )| 
		%	\leq \frac{1}{4} \|\nabla \tilde{\mu}\|^2 + C \|\rho \|_{H^3}^2,
		%	\end{align}
	\begin{align}
		|(f(\varphi),B_N \xi)| &\leq \|f(\varphi) \| \|B_N \xi \| \leq \frac{1}{4} \|B_N \xi \|^2 + C (1 + \| \varphi \|^{2(r+1)}_{H^1})  ,
	\end{align} 
	\begin{align}
		|(B_N\rho,B_N \xi)| &\leq \| B_N \rho \| \| B_N \xi\|\leq \frac{1}{4} \|B_N \xi\|^2 + C\| B_N \rho \|^2 , 
	\end{align}
	\begin{align}
		 |(A \v,\z)| \leq  \|\nabla \v\| \| \nabla \z \| \leq \frac{1}{12} \|\nabla \z\|^2 + C \|\nabla \v\| ^2,
	\end{align}
	\begin{align}
		|(B(\z,\u),\z)| = |(B(\z,\z),\u)| \leq C \| \z\| \|\nabla \u\| \| \nabla \z\| \leq \frac{1}{12} \|\nabla \z\|^2 + C \|\z\|^2 \|\nabla\u\|^2,
	\end{align}
	\begin{align}
	|(B(\v,\u),\z)| =|(B(\v,\z),\u)| \leq C \|\nabla \v\| \|\nabla \z\| \|\nabla \u\| \leq \frac{1}{12} \| \nabla \z\|^2 +  C\|\nabla \v\|^2 \|\nabla \u\|^2,
	\end{align}
	%	\begin{align}
		%	|(B_2(B_N\xi, \varphi),\z) | &\leq \|B_N\xi\| \|\nabla \varphi\|_{L^4}\|\z\|_{L^4} \no \\
		%	& \leq C\|B_N\xi\| \|\nabla \varphi\|^{1/2} \|B_N \varphi\|^{1/2}\|\z\|^{1/2} \|\nabla \z\|^{1/2} \no \\
		%	& \leq \frac{1}{8}\|B_N\xi\|^2 + C \|\nabla \varphi\| \|B_N \varphi\|\|\z\| \|\nabla \z\| \no \\
		%	&\leq \frac{1}{8}\|B_N\xi\|^2 +\frac{\nu}{10}\|\nabla\z\|^2 +  C \|\nabla \varphi\|^2 \|B_N \varphi\|^2\|\z\|^2,
		%	\end{align}
	\begin{align}
		|(B_2(B_N\varphi, \varphi),\z)| &\leq \|B_N\varphi\| \|\nabla\varphi\|_{L^4}\|\z\| _{L^4} \no \\
		&\leq C\|B_N \varphi\| \|\varphi\|_{H^2}\|\nabla \z\| \no \\
		&\leq \frac{1}{12} \|\nabla \z\| ^2+C\|B_N \varphi\|^2 \|\varphi\|^2_{H^2},
	\end{align}
	%	\begin{align}
		%	|(B_2(B_N\rho, \xi),\z)| &\leq C\|B_N\rho\| \|\nabla \xi\|_{L^4} \|\z\|_{\L^4} \no \\
		%	& \leq C\|B_N \rho\| \|\nabla \xi\|^{\frac{1}{2}}\|B_N \xi\|^{\frac{1}{2}} \|\nabla \z\| \no \\
		%	&\leq \frac{\nu}{10}\|\nabla \z\|^2 + C\|B_N \rho\|^2 \|\nabla \xi\| \|B_N\xi\|\no  \\
		%	&\leq \frac{\nu}{10} \|\nabla \z\|^2 + \frac{\delta}{8}\|B_N\xi \|^2 +C\|B_N \rho\|^4 \|\nabla \xi\|^2 ,
		%	\end{align}
	\begin{align}
		|(\U,\z)|\leq \|\U\|\|\z\|  \leq C\|\U\|\|\nabla\z\| \leq \frac{1}{12} \|\nabla \z\|^2 + C \|\U\|^2. \label{DP72}
	\end{align} 
	Substituting \eqref{DP71}-\eqref{DP72} in \eqref{DP18}, we get 
	\begin{align}
		&\frac{1}{2} \frac{d}{dt} (\|\xi \|^2 +\|\z\|^2) + \frac{1}{2}\|B_N \xi\|^2 + \frac{1}{2}\|\nabla \z\|^2 \leq C(\|\varphi\|^2_{H^2}+\|\nabla \u\|^2)( \|\nabla \xi\|^2 +\|\z\|^2 )\no \\
		& + C(1+ \|\U\|^2+ \|\varphi\|^{2(r+1)}_{H^1} + \|B_N\varphi\|^2 \|\varphi\|^2_{H^2} + \|\v\|^2\|\nabla \u\|^2 + \|\nabla \v\|^2 + \|B_N \rho\|^2) \label{DP73}.
	\end{align}
	By integrating \eqref{DP73} from $\tau $ to $t$ and
	%	\begin{align*}
		%	\|\nabla \xi(t) \|^2 +\|\z(t)\|^2 + \int_\tau^t (\|B_N \xi(s)\|^2 + \nu\|\nabla \z(s)\|^2+ \|\nabla \tilde{\mu} (s)\|^2)ds \no \\
		%	\leq \int_\tau^t  (\alpha_3(s) \|\nabla \xi(s)\|^2 + \beta_3(s)\|\z(s)\|^2)ds +\int_\tau^t h_1(s)ds.
		%	\end{align*}
	applying Gronwall's lemma and using Theorem \ref{existance}, we get \eqref{DP74}.
\end{proof}

\begin{proposition} \label{contdep1}
	Let $  \rho \in D(B_N),\v \in \V_{\mathrm{div}} $, and $\U \in \mathcal{U}_{ad}$. Let $(\varphi, \u)$ be the strong solution corresponding to the initial data $(\varphi(\tau), \u(\tau))=(\rho, \v)$. Then, there exists  a modulus $ \omega $ independent of control $\U$ such that for $t \in [\tau, T]$, 
	\begin{align}\label{DP78}
		\|B_N(\varphi(t)-\rho)\|+ \| \u(t) - \v \|_{\V_{\mathrm{div}}} \leq \omega(t-\tau).
	\end{align}
\end{proposition}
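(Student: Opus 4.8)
The plan is to handle the two terms in \eqref{DP78} by different mechanisms: the phase field $\varphi$ by interpolation, using that $\rho\in H^3$, and the velocity $\u$ by a density (approximation) argument, since $\v$ is only assumed to lie in $\V_{\mathrm{div}}$. Write $\xi(t)=\varphi(t)-\rho$ and $\z(t)=\u(t)-\v$, so that $\xi(\tau)=\z(\tau)=0$ and, by conservation of the spatial average along the flow, $\langle\xi(t)\rangle=0$ for every $t$.

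\emph{The phase-field term.} Since $\rho\in H^3$ we have $\rho\in D(B_N^{3/2})$ with $\|B_N^{3/2}\rho\|\le C\|\rho\|_{H^3}$, and by Theorem~\ref{strongsol} (together with $\rho\in H^3$) the strong solution satisfies $\sup_{[\tau,T]}\|B_N^{3/2}\varphi(t)\|\le C$, where $C$ depends only on the norms of the data and on $R$; hence $\|B_N^{3/2}\xi(t)\|\le C$ for all $t$. On the other hand, integration by parts (the Neumann condition) gives $\|B_N^{1/2}\xi(t)\|=\|\nabla\xi(t)\|$, and Proposition~\ref{timedep1}, whose hypotheses coincide with ours, yields $\|\nabla\xi(t)\|\le C(t-\tau)^{1/2}$. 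The spectral interpolation inequality $\|B_N z\|\le\|B_N^{1/2}z\|^{1/2}\,\|B_N^{3/2}z\|^{1/2}$ (an immediate consequence of Cauchy--Schwarz and self-adjointness of $B_N$, valid for $z\in D(B_N^{3/2})$) then gives $\|B_N(\varphi(t)-\rho)\|\le C(t-\tau)^{1/4}$, an honest H\"older rate for this component.

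\emph{The velocity term.} Here the energy identity that would produce $\|\nabla\z(t)\|^2$ on the left — obtained by testing the $\u$-equation against $A\z$ — is unavailable, as it involves $A\v$. Instead, fix $\delta\in(0,1)$ and choose $\v_\delta\in D(A)$ with $\|\v-\v_\delta\|_{\V_{\mathrm{div}}}\le\delta$ (possible since $\mathcal{D}\subset D(A)$ is dense in $\V_{\mathrm{div}}$); let $(\varphi_\delta,\u_\delta)$ be the strong solution with data $(\rho,\v_\delta)$ at time $\tau$ and the same control $\U$, and put $\z_\delta(t)=\u_\delta(t)-\v_\delta$, so $\z_\delta(\tau)=0$. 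Testing the $\u_\delta$-equation against $A\z_\delta$ and estimating every term exactly as in the proofs already given — using $\v_\delta\in D(A)\hookrightarrow\L^\infty$, $\u_\delta\in L^2(\tau,T;D(A))$, the pointwise bound on $\|B_N\varphi_\delta(t)\|$, Agmon's and Gagliardo--Nirenberg inequalities, and $\int_\tau^t\|\U(s)\|^2\,ds\le R^2(t-\tau)$ — leads to $\frac{d}{dt}\|\nabla\z_\delta(t)\|^2\le a_\delta(t)\,\|\nabla\z_\delta(t)\|^2+g_\delta(t)$ with $a_\delta\in L^1(\tau,T)$ and $\int_\tau^t g_\delta(s)\,ds\le C_\delta\,\psi(t-\tau)$, where $\psi$ is a fixed modulus (one may take $\psi(r)=\sqrt r$) and $C_\delta$ may blow up as $\delta\to0$ through $\|A\v_\delta\|$ and $\|\v_\delta\|_{\L^\infty}$. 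Gronwall's lemma with $\z_\delta(\tau)=0$ gives $\|\u_\delta(t)-\v_\delta\|_{\V_{\mathrm{div}}}^2\le\Lambda_\delta\,\psi(t-\tau)$, with $\Lambda_\delta$ independent of the particular control. Applying the continuous-dependence estimate \eqref{DP70} to the two strong solutions with data $(\rho,\v)$ and $(\rho,\v_\delta)$ (its constant remaining bounded as $\delta\to0$, since $\|\v_\delta\|_{\V_{\mathrm{div}}}\to\|\v\|_{\V_{\mathrm{div}}}$) gives $\|\u(t)-\u_\delta(t)\|_{\V_{\mathrm{div}}}\le C\|\v-\v_\delta\|\le C\delta$ on $[\tau,T]$, whence
\[
\|\u(t)-\v\|_{\V_{\mathrm{div}}}\ \le\ (C+1)\delta+\sqrt{\Lambda_\delta\,\psi(t-\tau)}.
\]
Setting $\omega_2(r):=\inf_{\delta\in(0,1)}\big[(C+1)\delta+\sqrt{\Lambda_\delta\,\psi(r)}\big]$ gives a nondecreasing function with $\omega_2(r)\to0$ as $r\to0^+$ (for each fixed $\delta$ the bracket tends to $(C+1)\delta$); after dominating $\omega_2$ by a continuous nondecreasing majorant vanishing at $0$, $\omega(r):=Cr^{1/4}+\omega_2(r)$ is the required modulus, and it is uniform over $\U\in\mathcal{U}_{ad}$.

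\emph{Main obstacle.} The only real difficulty is the velocity component: since $\v$ lies one derivative below $D(A)$, no energy estimate closes directly at the $\V_{\mathrm{div}}$-level, so the density-and-optimization step can only convert the $\delta$-dependent rates $\Lambda_\delta\psi(\cdot)$ — which degenerate as $\delta\to0$ — into a single abstract modulus, with no explicit decay rate surviving. This is precisely why \eqref{DP78} is stated with a modulus $\omega$ rather than a power of $t-\tau$, even though the phase-field component does enjoy the rate $(t-\tau)^{1/4}$.
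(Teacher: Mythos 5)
Your proposal is correct in outline, but it takes a genuinely different route from the paper. The paper proves \eqref{DP78} by a soft compactness argument: it negates the uniform statement \eqref{DP99}, extracts sequences $t_n\to\tau$, $\U_n$, uses Proposition \ref{timedep1} to get strong convergence $\varphi_n(t_n)\to\rho$ in $D(B_N^{1/2})$ and $\u_n(t_n)\to\v$ in $\G_{\mathrm{div}}$, upgrades this to weak convergence in $D(B_N)\times\V_{\mathrm{div}}$ via the uniform bound \eqref{DP100}, and then combines weak lower semicontinuity with the energy inequality to obtain convergence of the norms $\|B_N\varphi_n(t_n)\|^2+\|\u_n(t_n)\|^2_{\V_{\mathrm{div}}}\to\|B_N\rho\|^2+\|\v\|^2_{\V_{\mathrm{div}}}$, whence strong convergence in $\V_{\mathrm{div}}$ and a contradiction. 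You instead argue constructively: operator interpolation $\|B_N\xi\|\le\|B_N^{1/2}\xi\|^{1/2}\|B_N^{3/2}\xi\|^{1/2}$ between the rate of Proposition \ref{timedep1} and the uniform $D(B_N^{3/2})$ bound gives the explicit rate $(t-\tau)^{1/4}$ for the phase field, and a density-plus-Gronwall argument (regularize $\v$ to $\v_\delta\in D(A)$, close the energy estimate at the $\V_{\mathrm{div}}$ level for the regularized problem, transfer back via \eqref{DP70}, optimize over $\delta$) handles the velocity. Your approach buys more: an explicit H\"older rate for the $\varphi$-component and a modulus for $\u$ that is manifestly uniform over $\mathcal{U}_{ad}$ without invoking the Radon--Riesz upgrade; it also sidesteps the delicate step \eqref{DP102} of the paper, which requires the energy inequality \eqref{DP100} to hold with constant exactly one in front of the initial data in order to bound the $\limsup$ of the norms by the norm of the limit. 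The cost is the extra bookkeeping of the $\delta$-dependent constants $\Lambda_\delta$ and the verification (which you correctly flag) that the constant in \eqref{DP70} stays bounded as $\delta\to0$. Both proofs rest on the same two pillars — Proposition \ref{timedep1} and the uniform strong-solution bounds of Theorem \ref{strongsol} — so the substitution is legitimate.
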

\begin{proof}
	We want to show that
	\begin{align}
		\sup_{t \in [\tau, \tau + \epsilon], \U \in \mathcal{U}_{ad}} (\|B_N(\varphi(t)-\rho)\|+\| \u(t) - \v \|_{\V_{\mathrm{div}}} )\rightarrow 0 \ \mathrm{as} \ \epsilon \rightarrow 0 . \label{DP99} 
	\end{align}
	We prove it by contradiction. Suppose that \eqref{DP99} is false. Then, without loss of generality, we assume that there exist  sequences $t_n$ and $ \U_n $ such that 
	\begin{align} \label{DP104}
		\| \u_n(t_n) - \v\|_{\V_{\mathrm{div}}} \geq \epsilon \quad \mathrm{as} \quad t_n \rightarrow \tau.
	\end{align}
	where $ (\varphi_n, \u_n) $ is the strong solution of the system with control $ \U_n $. However, by \eqref{DP74} we have 
	\begin{align*}
		&\varphi_n (t_n) \rightarrow \rho \quad \mathrm{strongly \ in} \ D(B_N^{1/2}) ,\\
		&\u_n(t_n) \rightarrow \v \quad \mathrm{strongly\ in} \ \G_{\mathrm{div}}.
	\end{align*} 
	By \eqref{DP100}, we have the uniform boundedness of $ \| \u_n(t_n)\|_{\V_{\mathrm{div}}}$ and $ \| \varphi_n(t_n)\|_{D(B_N)} $, and hence by uniqueness of limits
	\begin{equation}\label{DP103}
		\begin{aligned} 
			B_N \varphi_n (t_n) \rightarrow B_N\rho \quad \mathrm{weakly \ in} \ H,\\
			\u_n(t_n) \rightarrow \v \quad \mathrm{weakly \ in} \ \V_{\mathrm{div}},
		\end{aligned}
	\end{equation}
	and 
	\begin{equation}\label{DP101}
		\begin{aligned}
			\|B_N\rho\| \leq \liminf_{n \rightarrow \infty} \|B_N\varphi_n(t_n)\|, \\
			\|\v\|_{\V_{\mathrm{div}}} \leq \liminf_{n \rightarrow \infty} \|\u_n(t_n)\|_{\V_{\mathrm{div}}}.
		\end{aligned}
	\end{equation}
	%	\begin{align} 
		%	\|B_N\rho\| \leq \liminf_{n \rightarrow \infty} \|B_N\varphi(t_n)\|. \\
		%	\|\v\|_{\V_{\mathrm{div}}} \leq \liminf_{n \rightarrow \infty} \|\u(t_n)\|_{\V_{\mathrm{div}}}.
		% 	\end{align}
	From \eqref{DP100}, we can also conclude that
	\begin{align} \label{DP102}
		\|B_N\rho\|^2+\|\v\|^2_{\V_{\mathrm{div}}} \geq \limsup_{n \rightarrow \infty} \|B_N \varphi_n(t_n)\|^2 + \|\u_n(t_n)\|^2_{\V_{\mathrm{div}}}.
	\end{align}
	Combining \eqref{DP101} and \eqref{DP102} we get the convergence of the norms, i.e.,
	\begin{align} \label{DP105}
		\|B_N \varphi_n(t_n)\|^2 +	\|\u_n(t_n)\|^2_{\V_{\mathrm{div}}} \rightarrow \|B_N\rho\|^2 + \|\v\|^2_{\V_{\mathrm{div}}}.
	\end{align}
	Hence, using \eqref{DP103} and \eqref{DP105}, we see that 
	\begin{align*}
		\u_n(s_n) \rightarrow \v \quad \mathrm{strongly \ in} \ \V_{\mathrm{div}},
	\end{align*}
	which gives a contradiction to \eqref{DP104}. 
\end{proof}

Now using the Theorem \ref{contdep0} and Proposition \ref{contdep1} we prove the smoothness properties of the value function.

\begin{theorem}
	For every $K >0$ there exists a modulus $\omega_K$ such that the value function defined in \eqref{valuefn} satisfies 
	\begin{align} \label{DP81}
		|\mathcal{V} (t_1,\rho_1, \v_1)-\mathcal{V} (t_2,\rho_2, \v_2)| \leq \omega_K \left( |t_1-t_2|^{1/2} + \| \rho_1 -\rho_2\|+ \| \v_1-\v_2 \|_{\V_{{\mathrm{div}}}'} \right)
	\end{align}
	for $t_1, t_2 \in [0,T]$ and $\|\rho_1\|_{D(B^{1/2}_N)}, \| \rho_2\|_{D(B^{1/2}_N)}, \| \v_1\| ,\|\v_2\| \leq K$. Moreover, 
	\begin{align} \label{DP144}
		|\mathcal{V} (t, \rho, \v)| \leq C(1 + \|\nabla \rho\|^k + \|\v\|^k).
	\end{align}
\end{theorem}

\begin{proof}
	% Let $(\varphi_2, \u_2,\U)$ be an optimal triplet for the initial data $(t, \rho_2, \v_2)$ which solves \eqref{DP44}. For the control $\U$, let $(\varphi_1, \u_1 )$ be corresponding solution with initial data $(t, \rho_1, \v_1)$. Note that $\U$ need not be an optimal control for the system with the initial data $(t, \rho_1, \v_1)$.
	
	First, we consider 
	\begin{align}
		|\mathcal{V}(t,\rho_1, & \v_1) - \mathcal{V}(t, \rho_2, \v_2)| = |\inf_{\U \in \mathcal{U}_{ad}} \mathcal{J}(t, \rho_1,\v_1, \U) - \inf_{\U \in \mathcal{U}_{ad}} \mathcal{J}(t, \rho_2,\v_2, \U)| \no \\
		& \leq \sup_{\U \in \mathcal{U}_{ad}} |\mathcal{J}(t, \rho_1,\v_1, \U) - \mathcal{J}(t, \rho_2,\v_2, \U)  | \no \\
		& \leq \sup_{\U \in \mathcal{U}_{ad}} \Big( \int_t^T |l(s, \varphi_1(s), \u_1(s), \U(s)) -l (s, \varphi_2(s), \u_2(s), \U(s)) | ds   \no \\
		& \hspace{3cm} + |g(\varphi_1(T), \u_1(T))-g(\varphi_2(T),\u_2(T))| \Big) \no   \\
		& \leq C \sup_{\U \in \mathcal{U}_{ad}} \Big( \int_t^T (\|\nabla(\varphi_1 - \varphi_2)\| + \|\u_1-\u_2\|)ds \no + L_R(\|\varphi_1 - \varphi_2 \| + \|\u_1 - \u_2 \|_{\V_{\mathrm{div}}'} \Big), \\
		& \leq C_K(\|\rho_1 - \rho_2 \| + \|\v_1 - \v_2 \|_{\V_{\mathrm{div}}'}), \label{DP141}
	\end{align}
for $\|\rho_1\|_{D(B^{1/2}_N)}, \|\rho_2\|_{D(B^{1/2}_N)}, \| \v_1\| ,\|\v_2\| \leq K $, where we used the Theorem \ref{contdep0} in the last step.
	
	To establish continuity of $\mathcal{V}$ in time, we consider two systems that evolve from the same initial data $(\rho, \v) $  at time $t_1$ and $t_2$. Without loss of generality, we assume with $0\leq t_1 \leq t_2 \leq T$. Let $(\varphi_2, \u_2)$ be a solution of the system \eqref{DP5}-\eqref{DP8} with initial data $(t_2, \rho, \v)$ and control $\U_2$.  Fix $\U_0 \in \mathcal{U}_R$. We define control $\U_1$ in the following way
	\[
	\U_1(s) = 
	\begin{cases}
		\U_0(s), & s \in (t_1,t_2), \\
		\U_2(s), & s \in (t_2,T).
	\end{cases}
	\]
	Let $(\varphi_1,\u_1)$ be the solution of the system \eqref{DP5}-\eqref{DP8} with initial data $(\varphi_1(t_1), \u_1(t_1))= (\rho, \v)$, and control $\U_1$. Since the solution of the system \eqref{DP5}-\eqref{DP8} with controls $\U_1$ and $\U_2$ is unique, using the semigroup property, we have 
	\begin{align}
		X_1(s;t_1, \rho, \v, \U_1) = X_2(s; t_2, X_1(t_2;t_1, \rho, \v, \U_1), \U_2), \quad \mathrm{for} \ t_1 \leq t_2 \leq s, \label{DP50}
	\end{align}
	where $X_1 = (\varphi_1, \u_1)$ and $X_2= (\varphi_2, \u_2)$.
	Now consider
	\begin{align}
		|\mathcal{V}(t_1,\rho, \v) - \mathcal{V}(t_2, \rho, \v)| &= |\inf_{\U \in \mathcal{U}_{ad}} \mathcal{J}(t_1, \rho,\v, \U) - \inf_{\U \in \mathcal{U}_{ad}} \mathcal{J}(t_2, \rho,\v, \U)| \no \\
		& \leq \sup_{\U \in \mathcal{U}_{ad}} |\mathcal{J}(t_1, \rho,\v, \U) - \mathcal{J}(t_2, \rho,\v, \U)  | \no \\
		&  \leq \sup_{\U \in \mathcal{U}_{ad}} \left( |  \int_{t_1}^T l(s, \varphi_1(s), \u_1(s), \U(s)) -  \int_{t_2}^Tl (s, \varphi_2(s), \u_2(s), \U(s))  ds |  \right. \no \\
		& \hspace{3cm} \left. +|g(\varphi_1(T), \u_1(T))-g(\varphi_2(T),\u_2(T))| \right) \no   \\
		& \leq \sup_{\U \in \mathcal{U}_{ad}}  \int_{t_1}^{t_2} | l(s, \varphi_1(s), \u_1(s), \U(s)) | ds  \no \\
		& \ \ + \sup_{\U \in \mathcal{U}_{ad}}  (\int_{t_2}^T | (l(s, \varphi_1(s), \u_1(s), \U(s)) -l (s, \varphi_2(s), \u_2(s), \U(s))|  ds)  \no \\
		& \ \ + C (\| \varphi_1(T) - \varphi_2(T) \| + \| \u_1(T) - \u_2(T)\|_{\V_{{\mathrm{div}}}'})
	\end{align}
	Observe that by Holder inequality  and \eqref{DP138}, we get
	\begin{align}
		\int_{t_1}^{t_2} | l(s, \varphi_1(s), \u_1(s), \U(s))| ds & \leq C\int_{t_1}^{t_2} (1+ \|\varphi_1(s)\|_{D(B^{1/2}_N)}^k + \| \u_1(s) \|^k)ds  \no \\
		%		& \leq C (t_2 - t_1) + (t_2 - t_1)^{\frac{1}{2}} \left( \int_{t_1}^{t_2} \|\nabla \varphi_1(s)\|^{2k}  \right)^\frac{1}{2} + (t_2 - t_1)^{\frac{1}{2}} \left( \int_{t_1}^{t_2} \|\u_1(s)\|^{2k}  \right)^\frac{1}{2}  \no \\
		& \leq C_K (t_2 - t_1),
	\end{align}
 Using \eqref{DP146}, \eqref{DP50}, Theorem \ref{cont depen}, Theorem \ref{contdep0} and Proposition \ref{timedep1}, we get
\begin{align}
    \sup_{\U \in \mathcal{U}_{ad}}  \int_{t_2}^T |l(s, &\varphi_1(s), \u_1(s), \U(s))  -l (s, \varphi_2(s), \u_2(s), \U(s))|  ds  \no \\
    & \leq C \int_{t_2}^T (\|\nabla (\varphi_1(s) - \varphi_2(s)) \| + \|\u_1(s) - \u_2(s) \|) ds \no \\
       & \leq C (\|\varphi_1(t_2) - \varphi_2(t_2) \| + \| \u_1(t_2) - \u_2(t_2)\| ) \no \\ 
       &=  C (\|\varphi_1(t_2) - \rho \| + \| \u_1(t_2) - \v\| )  \no \\
    & \leq C_K (t_2 - t_1)^\frac{1}{2}
\end{align}
and	using Theorem \ref{contdep0}, Proposition \ref{DP130} and Proposition \eqref{timedep1},  
\begin{align}
    \| \varphi_1(T) - \varphi_2(T) \| + \| \u_1(T) - \u_2(T)\|_{\V_{{\mathrm{div}}}'} 
    & \leq C (\|\varphi_1(t_2) - \varphi_2(t_2) \| + \|\u_1(t_2) -\u_2(t_2) \|_{\V_{{\mathrm{div}}}'}) \no \\
    &=  C (\|\varphi_1(t_2) - \rho \| + \|\u_1(t_2) -\v \|_{\V_{{\mathrm{div}}}'}) \no \\
    & \leq C (t_2 - t_1)^\frac{1}{2}
\end{align}
Hence, we get 
\begin{align}\label{DP142}
    |\mathcal{V}(t,\rho_1, \v_1) - \mathcal{V}(t, \rho_2, \v_2)| \leq w_K(t_2-t_1 ).
\end{align}
Combining \eqref{DP141} and \eqref{DP142}, we get \eqref{DP81}. Also, using \eqref{DP132}, \eqref{DP143} and Theorem \ref{existance}, we get \eqref{DP144}. 
	%	From \eqref{DP52} and \eqref{DP54}, we get 
	%	\begin{align}
		%	|\mathcal{V} (t_2, \rho, \v)- \mathcal{V} (t_1, \rho, \v)| \leq w_r (t_2-t_1). \label{DP80}
		%	\end{align}
	%	Combining \eqref{DP87} and \eqref{DP80} we get \eqref{DP81}. Moreover, by Theorem \ref{existance}, we get that 
	%	\begin{align*}
		%	|\mathcal{V} (t, \rho, \v)| \leq C(1 + \|\nabla \rho\|^2 + \|\v\|^2).
		%	\end{align*}
	
\end{proof}

\subsection{Dynamic Programming Principle (DPP)}
Now we prove the main theorem of this section namely, the dynamic programming principle.
\begin{theorem}[Bellman's principle of optimality] \label{DPP}
Let $ \mathcal{V} $ be as defined in \eqref{valuefn}. Then for $ 0 \leq \tau \leq t_0 \leq T $, we have
\begin{align*}
	\mathcal{V} (\tau, \rho, \mathbf{v}) = \inf_{\mathrm{U} \in \mathcal{U}_{ad}} \left\{  \int_\tau^{t_0} l(t, \varphi(t), \u(t) , \U(t)) dt + \mathcal{V} (t_0, \varphi(t_0) , \mathbf{u}(t_0))\right\}. 
	\end{align*}
\end{theorem}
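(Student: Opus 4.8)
The plan is to prove the two inequalities
\[
\mathcal{V}(\tau,\rho,\v) \;\le\; \inf_{\U\in\mathcal U_{ad}}\Bigl\{\tfrac12\!\int_\tau^{t_0}\!(\|\varphi(t)\|^2+\|\u(t)\|^2+\|\U(t)\|^2)\,dt + \mathcal{V}(t_0,\varphi(t_0),\u(t_0))\Bigr\}
\]
and the reverse inequality separately, in the standard fashion for Bellman's principle, the main tool being the uniqueness of solutions of \eqref{DP5}-\eqref{DP8} together with the semigroup (flow) property
\[
X(s;\tau,\rho,\v,\U) \;=\; X\bigl(s;t_0,X(t_0;\tau,\rho,\v,\U),\U|_{[t_0,T]}\bigr),\qquad \tau\le t_0\le s\le T,
\]
which follows from that uniqueness exactly as in \eqref{DP50}. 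I will write $X=(\varphi,\u)$ throughout and use additivity of the running cost over the split $[\tau,T]=[\tau,t_0]\cup[t_0,T]$, i.e.\ for any admissible $\U$,
\[
\mathcal J(\tau,\rho,\v,\U) \;=\; \tfrac12\!\int_\tau^{t_0}\!(\|\varphi(t)\|^2+\|\u(t)\|^2+\|\U(t)\|^2)\,dt \;+\; \mathcal J\bigl(t_0,\varphi(t_0),\u(t_0),\U|_{[t_0,T]}\bigr).
\]
Call the right-hand side of the claimed identity $\widetilde{\mathcal V}(\tau,\rho,\v)$.

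For the inequality $\mathcal V(\tau,\rho,\v)\le\widetilde{\mathcal V}(\tau,\rho,\v)$: fix any $\U\in\mathcal U_{ad}$ on $[\tau,T]$ and any $\delta>0$; let $X=(\varphi,\u)$ be the corresponding solution on $[\tau,T]$, and choose a $\delta$-optimal control $\U^\delta$ for the problem started at $(t_0,\varphi(t_0),\u(t_0))$, so that $\mathcal J(t_0,\varphi(t_0),\u(t_0),\U^\delta)\le \mathcal V(t_0,\varphi(t_0),\u(t_0))+\delta$. Concatenate: let $\widehat\U=\U$ on $[\tau,t_0)$ and $\widehat\U=\U^\delta$ on $[t_0,T]$; this is admissible since $\mathcal U_{ad}=L^2(\tau,T;\mathcal U_R)$ is stable under such splicing. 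By uniqueness and the flow property, the solution driven by $\widehat\U$ agrees with $X$ on $[\tau,t_0]$ and with the $(t_0,\varphi(t_0),\u(t_0))$-solution driven by $\U^\delta$ on $[t_0,T]$; the cost-additivity identity then gives
\[
\mathcal V(\tau,\rho,\v)\le \mathcal J(\tau,\rho,\v,\widehat\U)\le \tfrac12\!\int_\tau^{t_0}\!(\|\varphi\|^2+\|\u\|^2+\|\U\|^2)\,dt+\mathcal V(t_0,\varphi(t_0),\u(t_0))+\delta.
\]
Taking the infimum over $\U$ and then $\delta\downarrow 0$ yields $\mathcal V(\tau,\rho,\v)\le\widetilde{\mathcal V}(\tau,\rho,\v)$.

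For the reverse inequality $\mathcal V(\tau,\rho,\v)\ge\widetilde{\mathcal V}(\tau,\rho,\v)$: fix $\delta>0$ and pick $\U^\delta\in\mathcal U_{ad}$ with $\mathcal J(\tau,\rho,\v,\U^\delta)\le \mathcal V(\tau,\rho,\v)+\delta$; let $X^\delta=(\varphi^\delta,\u^\delta)$ be the associated solution. Restricting $\U^\delta$ to $[t_0,T]$ is admissible for the problem started at $(t_0,\varphi^\delta(t_0),\u^\delta(t_0))$, and by the flow property the corresponding solution is $X^\delta|_{[t_0,T]}$, so $\mathcal V(t_0,\varphi^\delta(t_0),\u^\delta(t_0))\le \mathcal J(t_0,\varphi^\delta(t_0),\u^\delta(t_0),\U^\delta|_{[t_0,T]})$. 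Combining with cost-additivity,
\[
\widetilde{\mathcal V}(\tau,\rho,\v)\le \tfrac12\!\int_\tau^{t_0}\!(\|\varphi^\delta\|^2+\|\u^\delta\|^2+\|\U^\delta\|^2)\,dt+\mathcal V(t_0,\varphi^\delta(t_0),\u^\delta(t_0))\le \mathcal J(\tau,\rho,\v,\U^\delta)\le \mathcal V(\tau,\rho,\v)+\delta,
\]
and letting $\delta\downarrow 0$ finishes the proof. The only genuinely delicate point is making the concatenation argument rigorous: one must check that the spliced control lies in $\mathcal U_{ad}$ (immediate here, since $\mathcal U_{ad}=L^2(\tau,T;\mathcal U_R)$ is defined pointwise-in-time) and, more importantly, that the flow/semigroup identity \eqref{DP50} genuinely holds at the level of the weak solutions of \eqref{DP5}-\eqref{DP8}; this rests on the uniqueness statement in Theorem \ref{existance}, together with the observation that $\varphi(t_0)\in D(B_N^{1/2})$ and $\u(t_0)\in\G_{\mathrm{div}}$, so that $(t_0,\varphi(t_0),\u(t_0))$ is again an admissible initial datum for the value function. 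Measurability in $t_0$ of the intermediate state $t_0\mapsto X(t_0;\tau,\rho,\v,\U)$, needed so that the infimum on the right is well defined, follows from the continuity $\varphi\in C([\tau,T];D(B_N^{1/2}))$, $\u\in C([\tau,T];\G_{\mathrm{div}})$ guaranteed by the weak-solution class.
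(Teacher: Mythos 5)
Your proposal is correct and follows essentially the same route as the paper: both directions rest on cost additivity over the split $[\tau,t_0]\cup[t_0,T]$, the uniqueness/flow property \eqref{DP50}, concatenation with a near-optimal control for the tail problem in one direction, and restriction of a (near-)optimal control to $[t_0,T]$ in the other. Your write-up is in fact slightly more careful than the paper's on the admissibility of the spliced control and the measurability of $t_0\mapsto X(t_0;\tau,\rho,\v,\U)$, but these are refinements of the same argument rather than a different one.
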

\begin{proof}
	Let $\mathrm{U}$ be an arbitrary control. 
	Let $ (\varphi,\mathbf{u}) $ be the corresponding strong solution of the system \eqref{DP5}-\eqref{DP8} with initial data $(\varphi(\tau), \u(\tau))= (\rho, \v)$. Then, for $\tau \leq t_0 \leq T$
	\begin{align*}
		\mathcal{J}(\tau,  \rho,\mathbf{v}, \mathrm{U}) &=   \int_\tau^T l(t, \varphi(t), \u(t) , \U(t)) dt  +g(\varphi(T),   \mathbf{u}(T))\\ 
		& =  \int_\tau^{t_0} l(t, \varphi(t), \u(t) , \U(t)) dt +\int_{t_0}^T l(t, \varphi(t), \u(t) , \U(t)) dt +  g(\varphi(T),   \mathbf{u}(T))\\
		& \geq  \int_\tau^{t_0} l(t, \varphi(t), \u(t) , \U(t))  dt + \inf_{\bar{\U}\in \mathcal{U}_R} \left\{ \int_{t_0}^T l (t,\bar{\varphi}(t),  \bar{\mathbf{u}}(t), \bar{\U}(t)) dt + g(\bar{\varphi}(T), \bar{\mathbf{u}}(T) )\right\} ,
	\end{align*}
	where $(\bar{\varphi}, \bar{\mathbf{u}})$ is the solution of \eqref{DP5}-\eqref{DP8} with initial conditions $(\varphi(t_0), \u (t_0))$ and control $\bar{\U}\in \mathcal{U}_R$. Then
	\begin{align} \label{DP1}
		\mathcal{J}(\tau, \rho,\mathbf{v},  \mathrm{U})\geq  \int_\tau^{t_0} l(t, \varphi(t), \u(t) , \U(t))  dt +\mathcal{V}(t_0, \varphi(t_0), \u(t_0)).
	\end{align}
	Since $\U$ is arbitrary, taking infimum on both sides of \eqref{DP1} over $\U$ we arrive at 
	\begin{align*}
		\mathcal{V} (\tau, \rho, \mathbf{v}) \geq \inf_{\U\in \mathcal{U}_R} \left\{  \int_\tau^{t_0} l(t,\varphi(t),\mathbf{u}(t) , \mathrm{U}(t)) dt + \mathcal{V} (t_0, \varphi(t_0), \u(t_0)) \right\}.
	\end{align*}
	For the case $"\leq"$, Let $\U_1$ be a control and $(\varphi_1, \u_1 )$ be corresponding solution of the system \eqref{DP5}-\eqref{DP8} with initial data $(\tau , \rho, \v)$. For $\epsilon >0$, let $\U_2 $ be such that $\mathcal{V} (t_0, \varphi_1(t_0), \u_1(t_0)) + \epsilon \geq \mathcal{J} (t_0, \varphi_2(t_0), \u_2(t_0), \U_2)$, where $(\varphi_2, \u_2)$ is the solution of the system \eqref{DP5}-\eqref{DP8} with control $\U_2$ with initial condition $(\varphi_1(t_0), \u_1(t_0))$. Now we define a new control 
	\[
	\U(t) = 
	\begin{cases}
		\U_1(t), & \tau \leq t \leq t_0, \\
		\U_2(t), & t > t_0.
	\end{cases}
	\]
	Let $(\varphi,\u)$ be solution of the system \eqref{DP5}-\eqref{DP8} with the above control $\U$ and initial data $(\tau, \rho, \v)$, then
	\begin{align*}
		\mathcal{V} (\tau, \rho, \v) &= \int_\tau^{t_0} l(t, \varphi(t), \u(t) , \U(t))  dt +\int_{t_0}^T l(t, \varphi(t), \u(t) , \U(t))  dt + g(\varphi(T), \mathbf{u}(T) )\\
		& = \int_\tau^{t_0} l(t, \varphi_1(t), \mathbf{u}_1(t) ,  \mathrm{U}_1(t)) dt  + \int_{t_0}^T l(t,\varphi_2(t) ,\mathbf{u}_2(t), \mathrm{U}_2(t) ) dt + g(\varphi(T), \mathbf{u}(T) ) \\
		&= \int_\tau^{t_0} l(t, \varphi_1(t), \mathbf{u}_1(t) ,  \mathrm{U}_1(t)) dt  + \mathcal{J}(t_0, \varphi_1(t_0), \u_1(t_0), \U_2(t) ) \\
		& \leq \int_\tau^{t_0} l(t, \varphi_1(t), \mathbf{u}_1(t) ,  \mathrm{U}_1(t))dt +\mathcal{V} (t_0, \varphi_1(t_0), \u_1(t_0)) + \epsilon
	\end{align*}
	where $(\varphi_2, \u_2)$ is the solution of \eqref{DP5}-\eqref{DP8} corresponding to control $\U_2$. Since $\epsilon$ and $\U_1$ are arbitrary we conclude that
	\begin{align*}
		\mathcal{V} (\tau, \rho, \v) \leq \inf_{\U \in \mathcal{U}_{ad}} \left\{ \int_\tau^{t_0} l(t, \varphi(t), \mathbf{u}(t) ,  \mathrm{U}(t)) dt +\mathcal{V} (t_0, \varphi(t_0), \u(t_0))  \right\} ,
	\end{align*}
	where $(\varphi, \u)$ is the solution with control $\U$. This proves the theorem.
	
\end{proof}

\section{Hamilton Jacobi Bellman equations} \label{sec HJB}
Our main goal in this work is to prove that the value function $ \mathcal{V} $ is the unique viscosity solution of the Hamilton-Jacobi-Bellman equation \eqref{HJB1}. We tackle the uniqueness issue in the next section. 

%Here, we prove that $ \mathcal{V} $ is a viscosity solution of \eqref{HJB1}.

In this section, our main aim is to show that the value function $\mathcal{V}$ defined in \eqref{valuefn} is a viscosity solution of an infinite dimensional Hamilton-Jacobi-Bellman equation:  
\begin{equation} \label{HJB1}
	\begin{aligned}
		\mathcal{V}_t -\langle B_1(\v, \rho)+B_N^2 \rho +  A_N f(\rho), \partial_\rho \mathcal{V} \rangle-\langle A\v + B(\v,\v)-B_2&(B_N \rho, \rho) ,\partial_{\v} \mathcal{V} \rangle \\
		+  H(t,\rho, \v,\partial_\rho \mathcal{V}, \partial_{\v} \mathcal{V} )&=0,  \\
		\mathcal{V}(T, \rho, \v) &= g(\rho,\v) ,
	\end{aligned}
\end{equation}
where the Hamiltonian function is given by
\begin{align}
	H(t,\rho, \v, \partial_\rho \mathcal{V}, \partial_{\v} \mathcal{V}) &= \inf_ { {{\U \in\mathcal{U}_R}} } \left(\langle \U, \partial_{\v} \mathcal{V} \rangle + l(\rho,\v, \U)\right) .  \label{Hamiltonian1}
\end{align}
The following specific example is covered by the analysis of this paper (Assumption \ref{l_assumption}). 
\begin{example}
Let us define $l$ and $g$ as follows:
\begin{align*}
	l(t, \rho, \v, \U) &=  \frac{1}{2}(\|\rho\|^2 + \|\v\|^2) + \frac{1}{2} \|\U\|^2, \\
	g(\rho, \v) &= \frac{1}{2} (\| \rho \|^2 + \| \v \|_{\V'_{\mathrm{div}}}^2).
\end{align*}    
\end{example}

For the above example, the Hamiltonian function is given by 
\begin{align}
	H(t, \rho, \v, p, \q) = \frac{1}{2} (\|\rho\|^2 + \|\v\|^2) + \underbrace{\inf_{ {{\U \in\mathcal{U}_R}} } \left( \langle \U, \q \rangle + \frac{1}{2} \|\U\|^2 \right) }_{:=\Upsilon (\q)}
\end{align}
We can explicitly obtain $ \Upsilon $ as 
\[
\Upsilon (\U)= 
\begin{cases}
	-\frac{1}{2} \|\U\|^2 & \mathrm{for} \ \| \U\|\leq R,\\
	-R \|\U\| + \frac{1}{2} R^2  & \mathrm{for } \ \| \U\|\geq R.
\end{cases}
\]
Also, the optimal feedback control is given by (see page 674, \cite{MR2141895}),
\begin{align*}
	\U^*(t) = \sigma (\q(t)), 
\end{align*} 
where
\[
\sigma (\q) = 
\begin{cases}
	-\q & \mathrm{if} \ \| \q\|\leq R,\\
	- \q \frac{ R}{\|\q \|} & \mathrm{if} \ \| \q\|\geq R,
\end{cases}
\]
and $ \q = \partial_\v \mathcal{V} $.   For the value function $ \mathcal{V} $  this translates to saying $  \q(t) = \partial_\v \mathcal{V}(t, \varphi(t), \u(t)) $, whenever the value function $ \mathcal{V} $ is  differentiable. However, the value function $ \mathcal{V} $ is not differentiable in general and the above equation \eqref{HJB1} has to be understood in a much general sense. We use the viscosity solution method to get the required result. 

In general, there is no single definition of viscosity solution that applies to all equations in an infinite dimensional setup. The definition of the viscosity solution involves maxima or minima for the test function. Hence, the main idea is to consider a test function that is convex and has coercivity or lower semi-continuity properties. The choice of a test function highly depends on the regularity of the function ( in this case solution of the PDE constraint of the control problem) and hence in turn on the type of non-linearity. It also depends on the growth properties of the solution and its relevant derivatives. In our discussion, we consider a definition of a viscosity solution motivated from \cite{MR2141895} (see also \cite{MR1918929, MR1753181}). In various other problems depending upon the properties of operators involved in PDE  one needs to consider more generalised/ modified definitions (eg. see \cite{ MR3674558, MR1741146}).

\begin{definition} \label{testfunc}
	A function $\Psi$  is a test function for \eqref{HJB1} if
	%	 $\Psi = \xi + \delta(t)(\|\nabla \rho\|^2+\|\v\|^2)$ 
	$\Psi = \xi + \delta(t)[(1+\|\nabla \rho\|^2)^m+(1+\|\v\|^2)^m], m \geq 1$
	and $\xi \in C^1((0, T)  \times D(B_N^{1/2}) \times \G_{\mathrm{div}})$ such that $A^{\frac{1}{2}} \partial_\v \xi $ and $ B_N^{\frac{1}{2}} \partial_\rho \xi $ are continuous on $(0, T)  \times D(B_N^{1/2}) \times \G_{\mathrm{div}}$, and $\delta > 0$ is a  $C^1$ function on  $(0,T)$.
\end{definition}

Now we set up our definition of the viscosity solution which will be used in our work. Note that the  Fr\'echet derivatives of the test function need to be well defined and the value function $\mathcal{V} $  is defined on  $ (0, T) \times D(B^{1/2}_N) \times \G_{\mathrm{div}} $. Hence we consider the same space while defining the viscosity solution. 

%where 
%\[
%h(\p) =\sup_{\U} \left( -(\U, \p) - \frac{1}{2} \|\U\|^2 \right) = 
% \begin{cases}
	%        \frac{1}{2}\|\p\|^2, & \|\p\| \leq R, \\
	%        R \|\p\| - \frac{1}{2} R^2, & \|\p\| \geq R,
	% \end{cases}
%\]
%where the optimal control is obtained by 
%\[
%\U (t) = 
%\begin{cases}
%-\p, & \|\p\| \leq R, \\
%-\frac{\p R}{\|\p\|^2}, & \|\p\| \geq R .
%\end{cases}
%\]
%
%Assuming that the value function $W$ defined in \eqref{valuefn}, is smooth enough and using dynamic programming principle one can derive the Hamilton-Jacobi-Bellman equation. We skip this derivation here as the proof is similar to the proof of theorem \ref{HJBThm} below and can be obtained by replacing the test function $\Psi $ in the proof by $W$.

%\begin{definition} \label{testfunc}
%A function $\Psi$  is a test function of class $\mathcal{D}$ if $\Psi \in C^1((0, T)  \times D(B_N^{1/2}) \times \G_{\mathrm{div}})$ and if $A^{\frac{1}{2}} \partial_\v \Psi $ and $ A_N \partial_\rho \Psi $ are continuous, and $\Psi$ is weakly sequentially lower semi continuous.
%\begin{enumerate}
%\item $\Psi (\cdot , \cdot, \cdot): [\tau, T] \times D(B_N^{1/2}) \times \G_{\mathrm{div}} \rightarrow \R$ is locally Lipschitz
%\item the Fr\'echet derivative
%\begin{align*}
%D \Psi= (\partial_t \Psi, \partial_\rho \Psi,\partial_\v \Psi) : [\tau, T]\times D(B_N)\times \V_{\mathrm{div}} \rightarrow \mathbb{R} \times D(B_N)^*\times \V_{\mathrm{div}}^*
%\end{align*}
%is locally Lipschitz.
%\end{enumerate}
%\end{definition}

\begin{definition} \label{def_visco}
	A weakly sequentially upper semicontinuous function  $\mathcal{V} : (0,T) \times D(B^{1/2}_N) \times \G_{\mathrm{div}} \rightarrow \R$ is called a viscosity subsolution of \eqref{HJB1} if for every test function $\Psi $, for which  $ \mathcal{V} - \Psi$ attains global maximum at $(t, \rho, \v)$ implies $ \rho \in D(B_N^{3/2}) , \v \in \V_{\mathrm{div}}$ and 
	\begin{align} \label{HJBmax1}
		&\partial_t \Psi (t, \rho, \v)- \ _{D(B_N^{1/2})'}\langle B_1(\v, \rho)+B_N^2 \rho +  A_N f(\rho), \partial_\rho \Psi(t, \rho, \v) \rangle_{D(B_N^{1/2})} \no \\
		&-  \  _{\V_{{\mathrm{div}}}'}\langle A\v + B(\v,\v)-B_2(B_N \rho, \rho) ,\partial_{\v} \Psi(t, \rho, \v) \rangle_{\V_{{\mathrm{div}}}} 
		+H (t,\rho, \v,\partial_\rho \Psi(t, \rho, \v), \partial_\v \Psi(t, \rho, \v) ) \geq 0. 
	\end{align}
Similarly, 
a weakly sequentially lower semicontinuous function  $\mathcal{V} : (0,T) \times D(B^{1/2}_N) \times \G_{\mathrm{div}} \rightarrow \R$ is called a viscosity  supersolution of \eqref{HJB1} if for every test function $\Psi $, for which  $ \mathcal{V} +\Psi$  attains global minimum at $(t, \rho, \v)$ implies $ \rho \in D(B_N^{3/2}) , \v \in \V_{\mathrm{div}}$ and
	\begin{align} \label{HJBmin1}
		&-\partial_t \Psi (t, \rho, \v)+ \ _{D(B_N^{1/2})'}\langle B_1(\v, \rho)+B_N^2 \rho +  A_N f(\rho), \partial_\rho \Psi(t, \rho, \v) \rangle_{D(B_N^{1/2})} \no \\
		&+ \ _{\V_{{\mathrm{div}}}'}\langle A\v + B(\v,\v)-B_2(B_N \rho, \rho) ,\partial_{\v} \Psi(t, \rho, \v) \rangle_{\V_{{\mathrm{div}}}}  + H (t,\rho, \v,-\partial_\rho \Psi(t, \rho, \v), -\partial_\v \Psi(t, \rho, \v) ) \leq 0  .
	\end{align}	

 A function is called a viscosity solution if it is both a viscosity subsolution and a viscosity supersolution. 
\end{definition}

Our main theorem of this section is stated below:
\begin{theorem}\label{HJBThm}
Assume that Assumption \eqref{l_assumption} is satisfied. Then, the value function $ \mathcal{V} $ is a viscosity solution of the HJB equation \eqref{HJB1}.
\end{theorem}

We prove the theorem in three main steps.
As per the definition of viscosity sub/super solution we need to consider  global maxima of $\mathcal{V} - \Psi $ and  global minima of $\mathcal{V} + \Psi $ and evaluate the  Fr\'echet derivative of test function at these points. This requires that the point  maximum/minimum lie in a better regular space which is proved in the  first step and then in steps 2 and 3 we show the viscosity supersolution and subsolution conditions respectively.
   As in the definition 4.2 we write $\Psi = \xi + h$ where  $h =\delta(t) [(1+\|\nabla \rho\|^2)^m + (1+ \|\v\|^2)^m] .$
%be such that they are in $   D(B_N^{3/2}), \V_{{\mathrm{div}}} $ respectively. We will prove that they have better regularity and lie in the desired space in the first part of the proof of Theorem \ref{HJBThm}.   
%Let $h =\delta(t) [(1+\|\nabla \rho\|^2)^m + (1+ \|\v\|^2)^m] $. 
We observe that the Fr\'echet derivatives of $h$ with respect to $\rho$ on $D(B_N^{1/2})$ and with respect to $\v$ on $\G_{\mathrm{div}}$ are given by
\begin{align*}
	\partial_\rho h &=2m \delta(t) (1 + \|\nabla \rho\|^2)^{m-1} B_N \rho,
\end{align*}
and
\begin{align*}
	\partial_{\v} h &=  2m \delta(t) (1 + \|\v\|^2)^{m-1} \v,
\end{align*}
%These Fr\'echet derivatives lie in the corresponding dual spaces $D(B_N^{1/2})'$ and $\G_{\mathrm{div}}$, respectively. After we prove the better regularity of the points of maxima and minima we can show that the Fr\'echet derivatives at those points will also lie in better spaces namely   $D(B_N^{1/2}) $ and $  \G_{{\mathrm{div}}}. $
%$ \rho \in D(B_N^{3/2}),$  we  note that $ \partial_\rho h \in D(B_N^{1/2}) $ and $ \partial_\v h \in \G_{{\mathrm{div}}} $. 
respectively, and the Fr\'echet derivatives of $\Psi$ on $ D(B_N^{1/2}) $ and on $\G_{{\mathrm{div}}} $ with respect to $\rho$ and $\v$ are denoted by $ \partial_\rho \Psi,  \partial_{\v} \Psi$ respectively and we write 
\begin{align*}
	\partial_\rho \Psi &= \partial_\rho \xi + 2m \delta(t) (1 + \|\nabla \rho\|^2)^{m-1} B_N \rho, \\
	\partial_{\v} \Psi &= \partial_{\v} \xi + 2m \delta(t) (1 + \|\v\|^2)^{m-1} \v. 
\end{align*}

These Fr\'echet derivatives lie in the corresponding dual spaces $D(B_N^{1/2})'$ and $\G_{\mathrm{div}}$, respectively. But note that after the proof of step 1, we see that  Fr\'echet derivatives  will lie in better spaces namely   $D(B_N^{1/2}) $ and $  \G_{{\mathrm{div}}}. $
Therefore,  we can interpret the terms $ _{D(B_N^{1/2})'}\langle B_1(\v, \rho)+B_N^2 \rho +  A_N f(\rho), \partial_\rho \Psi(t, \rho, \v) \rangle_{D(B_N^{1/2})} $ and $ _{\V_{{\mathrm{div}}}'}\langle A\v + B(\v,\v)-B_2(B_N \rho, \rho) ,\partial_{\v} \Psi(t, \rho, \v) \rangle_{\V_{{\mathrm{div}}}} $ in the sense of duality pairing. We write  $ \langle \cdot, \cdot \rangle $ in place of $ _{X'}\langle \cdot, \cdot \rangle_X $, and the duality pairing is understood contextually. Currently we can assume that the Fr\'echet derivatives  are written only formally but when we will use them in steps 2 and 3 it will be meaningful. We now proceed with the proof of the theorem in 3 steps .

\begin{proof}  
We follow the techniques from \cite{MR2141895} in the following proof. \\
{\bf Step 1:} In this step our main aim is to prove that point of minima lies in a better regular space which allows us to define test function as desired in the definition  \ref{def_visco}.  In particular  let  $ \Psi(t,\rho, \v) = \xi(t,\rho, \v) + \delta(t)[(1+\|\nabla \rho\|^2)^m+(1+\|\v\|^2)^m]$.  Let $\mathcal{V} +\Psi$ attain a global minimum at $(t_0, \rho_0. \v_0)$. 
%Consider a test function $\Psi$ in the sense of definition \ref{testfunc} and consider the point of global maxima of   $\mathcal{V} - \Psi$ and point of global minima of  $\mathcal{V} + \Psi $.
%Note that
% \begin{align*}
% \partial_t \Psi & = \partial _t\xi + \delta'(t)[(1+\|\nabla \rho\|^2)^m+(1+\|\v\|^2)^m]\\
% \partial_\rho \Psi &=  \partial_\rho \xi +2m  \delta(t) (1+\|\nabla \rho\|^2)^{m-1} B_N \rho \\
% \partial_\v \Psi &=  \partial_\v \xi +2m \delta(t)  (1 + \|\v\|^2)^{m-1} \v .
% \end{align*}
We want to prove that  $ \rho_0 \in D(B_N^{3/2}),$ and   $ \v_0 \in \V_{{\mathrm{div}}} $.

By the dynamic programming principle we have for every $\epsilon >0$ there exists a control $\U_\epsilon \in \mathcal{U}_{ad}$ such that 
\begin{align}
&\mathcal{V} (t_0, \rho_0, \v_0) + \epsilon^2 \no \\
&> \int_{t_0}^{t_0 + \epsilon}  l( \varphi_\epsilon(s) ,\u_\epsilon(s) ,\U_\epsilon (s) )ds + \mathcal{V} (t_0+\epsilon , \varphi_\epsilon(t_0 + \epsilon), \u_\epsilon(t_0 + \epsilon)), \label{DP90}
\end{align}
where $ (\varphi_\epsilon, \u_\epsilon) $ is the corresponding solution of the system \eqref{DP5}-\eqref{DP8} with control $ \U_\epsilon $ and initial data $ (\rho_0, \v_0) $. Since $(t_0, \rho_0. \v_0)$ is a global minimum point for $ \mathcal{V} + \Psi $, we have that 
\begin{align*}
\mathcal{V} (t, \rho, \v)- \mathcal{V} (t_0, \rho_0, \v_0) \geq \Psi(t_0, \rho_0, \v_0)-\Psi(t, \rho, \v), \quad \forall (t, \rho, \v) \in (0,T) \times D(B_N^{1/2}) \times \G_{\mathrm{div}}.
\end{align*}
Then, from \eqref{DP90} we get
\begin{align*}
&\epsilon^2 - \int_{t_0}^{t_0 + \epsilon} l ( \varphi_\epsilon(s), \u_\epsilon(s) , \U_\epsilon(s) )ds \\
&\geq  \mathcal{V} (t_0+\epsilon, \varphi_\epsilon(t_0+ \epsilon), \u_\epsilon(t_0+\epsilon))-\mathcal{V} (t_0, \rho_0, \v_0)\\
&\geq \Psi(t_0, \rho_0, \v_0)-\Psi(t_0+\epsilon, \varphi_\epsilon(t_0+ \epsilon), \u_\epsilon(t_0+\epsilon)) \\
& {\geq \xi(t_0,\rho_0, \v_0)-\xi(t_0+\epsilon, \varphi_\epsilon(t_0+ \epsilon), \u_\epsilon(t_0+\epsilon)) + \delta (t_0)[(1+\|\nabla \rho_0\|^2)^m + (1+ \|\v\|^2)^m]} \\ 
&{-\delta(t_0+\epsilon)[(1+\|\nabla \varphi_\epsilon(t_0+\epsilon)\|^2)^m + (1+ \|\u_\epsilon(t_0+\epsilon)\|^2)^m]} \\
%& \geq \xi(t_0,\rho_0, \v_0)-\xi(t_0+\epsilon, \varphi_\epsilon(t_0+ \epsilon), \u_\epsilon(t_0+\epsilon)) + \delta (t_0)\|\nabla \rho_0\|^2 -\delta(t_0+\epsilon)\|\nabla \varphi_\epsilon(t_0+\epsilon)\|^2 \\
%&\quad+ \delta(t_0)\|\v_0\|^2-\delta(t_0+\epsilon)\|\u_\epsilon(t_0+\epsilon)\|^2.
\end{align*}
By the chain rule, we get
\begin{align} \label{DP91}
\epsilon^2 - &\int_{t_0}^{t_0 + \epsilon} l ( \varphi_\epsilon(s), \u_\epsilon(s) , \U_\epsilon(s) ) ds \no \\
\geq 
&-\int_{t_0}^{t_0 + \epsilon} \left( \partial_t \xi(s, \varphi_\epsilon (s), \u_\epsilon (s))+ \langle \partial_t\varphi_\epsilon (s), \partial_\rho \xi(s, \varphi_\epsilon (s), \u_\epsilon (s))\rangle + \langle \partial_t \u_\epsilon(s), \partial_\v \xi(s, \varphi_\epsilon (s), \u_\epsilon (s))\rangle  \right) ds \no \\
& -\int_{t_0}^{t_0 + \epsilon} { \delta'(s)(1+\|\nabla \varphi_\epsilon(s)\|^2)^m} ds -\int_{t_0}^{t_0+\epsilon} 2m \delta(s)  \langle B_N \varphi_\epsilon(s), \partial_t\varphi_\epsilon(s) \rangle (1+\|\nabla \varphi_\epsilon(s)\|^2)^{m-1} ds  \no \\
& - \int_{t_0}^{t_0 + \epsilon} \delta'(s)(1+\|\u_\epsilon(s)\|^2)^m
ds -  \int_{t_0}^{t_0 + \epsilon} 2m \delta(s)  \langle \u_\epsilon(s),  \partial_t\u_\epsilon(s) \rangle (1+\|\u_\epsilon(s)\|^2)^{m-1}ds.
\end{align}
Now, setting $\lambda = \inf_{t \in [t_0, t_0+\epsilon]} \delta(t)$ for some fixed $\epsilon_0>0$ and taking $\epsilon < \epsilon_0$ we divide \eqref{DP91} by $ \epsilon $ to get,
\begin{align}
\epsilon-& \frac{1}{\epsilon}  \int_{t_0}^{t_0 + \epsilon} l ( \varphi_\epsilon(s), \u_\epsilon(s) , \U_\epsilon(s) ) ds \no \\
\geq& -\frac{1}{\epsilon} \int_{t_0}^{t_0+\epsilon} \partial_t \xi(s, \varphi_\epsilon (s), \u_\epsilon (s))ds \no \\
& + \frac{1}{\epsilon} \int_{t_0}^{t_0+\epsilon}  \left\langle B_1(\u_\epsilon(s), \varphi_\epsilon(s))+B_N^2\varphi_\epsilon(s)+A_Nf(\varphi_\epsilon(s)), \partial_\rho \xi(s, \varphi_\epsilon (s), \u_\epsilon (s)) \right\rangle ds \no \\
& + \frac{1}{\epsilon} \int_{t_0}^{t_0+\epsilon} \langle A\u_\epsilon (s) + B(\u_\epsilon(s),\u_\epsilon(s)) - B_2(B_N \varphi_\epsilon(s), \varphi_\epsilon(s))-\U_\epsilon(s), \partial_\v \xi(s, \varphi_\epsilon (s), \u_\epsilon (s)) \rangle ds \no \\
&- \frac{1}{\epsilon}\int_{t_0}^{t_0 + \epsilon}  
{\delta'(s)(1 + \|\nabla \varphi_\epsilon(s)\|^2)^m}
ds-  \frac{1}{\epsilon}\int_{t_0}^{t_0 + \epsilon}  {\delta'(s)(1+\|\u_\epsilon(s)\|^2)^m
} ds \no  \\
&+{ \frac{2m}{\epsilon}\int_{t_0}^{t_0+\epsilon} \delta(s)  \langle B_N \varphi_\epsilon(s), B_1(\u_\epsilon(s), \varphi_\epsilon(s))+B_N^2\varphi_\epsilon(s)+A_Nf(\varphi_\epsilon(s))\rangle (1 + \|\nabla \varphi_\epsilon(s)\|^2)^{m-1}ds} \no \\
&+ {  \frac{2m}{\epsilon} \int_{t_0}^{t_0 + \epsilon}  \delta(s)  \langle \u_\epsilon(s), A\u_\epsilon (s) + B(\u_\epsilon(s),\u_\epsilon(s)) - B_2(B_N \varphi_\epsilon(s), \varphi_\epsilon(s))-\U_\epsilon(s) \rangle (1+\|\u_\epsilon(s)\|^2)^{m-1} ds} \label{DP145}
.
\end{align}
Rearranging above inequality and taking the definition of $ \lambda $ into account we get
\begin{align} \label{DP96}
{ \frac{2m\lambda}{\epsilon}} & \int_{t_0}^{t_0 + \epsilon} { (1 +\| \u_\epsilon(s)\|^2 )^{m-1} \|\nabla \u_\epsilon(s)\|^2 } ds+ \frac{2m\lambda}{\epsilon} \int_{t_0}^{t_0 + \epsilon} {(1+ \|\nabla \varphi_{\epsilon}(s)\|^2)^{m-1} \| B_N^{3/2} \varphi_\epsilon(s) \|^2 } ds  \no \\
\leq& \epsilon- \frac{1}{\epsilon}  \int_{t_0}^{t_0 + \epsilon} l \left( \varphi_\epsilon(s), \u_\epsilon(s) ,\U_\epsilon(s)  \right)ds + \frac{1}{\epsilon} \int_{t_0}^{t_0+\epsilon} \partial_t \xi(s, \varphi_\epsilon (s), \u_\epsilon (s))ds \no \\
& -\frac{1}{\epsilon} \int_{t_0}^{t_0 + \epsilon} \left\langle B_1(\u_\epsilon(s), \varphi_\epsilon(s))+B_N^2\varphi_\epsilon(s)+A_Nf(\varphi_\epsilon(s)), \partial_\rho \xi(s, \varphi_\epsilon (s), \u_\epsilon (s)) \right\rangle ds \no \\
& -\frac{1}{\epsilon}  \int_{t_0}^{t_0 + \epsilon} \langle A\u_\epsilon (s) + B(\u_\epsilon(s),\u_\epsilon(s)) - B_2(B_N \varphi_\epsilon(s), \varphi_\epsilon(s))-\U_\epsilon(s), \partial_\v \xi(s, \varphi_\epsilon (s), \u_\epsilon (s)) \rangle ds \no \\
&+ \frac{1}{\epsilon}\int_{t_0}^{t_0 + \epsilon} { \delta'(s)(1+\|\nabla \varphi_\epsilon(s)\|^2)^m } ds +  \frac{1}{\epsilon}\int_{t_0}^{t_0 + \epsilon}  { \delta'(s)(1+ \|\u_\epsilon(s)\|^2)^m } ds \no \\
& -\frac{2}{\epsilon} \int_{t_0}^{t_0 + \epsilon} {m  \delta(s) (1+\|\nabla \varphi_\epsilon(s)\|^2)^{m-1}} \langle B_N \varphi_\epsilon(s), B_1(\u_\epsilon(s), \varphi_\epsilon(s))+A_Nf(\varphi_\epsilon(s))\rangle ds \no  \\
& +\frac{2}{\epsilon} \int_{t_0}^{t_0+\epsilon} {m \delta(s) (1+\|\u_\epsilon (s)\|^2)^{m-1} }  \langle \u_\epsilon(s),  B_2(B_N \varphi_\epsilon(s), \varphi_\epsilon(s))+\U_\epsilon(s) \rangle ds.
\end{align}
In step 2, from \eqref{DP96} we will deduce \eqref{HJBmin1} by passing to the limit as $ \epsilon \rightarrow 0 $. But we need to estimate the regularity first.
%First, we show that $ \rho_0 \in D(B_N^{3/2}) $ and $ \v_0 \in \V_{\mathrm{div}} $. 
Let us denote by $ C = C(\|\nabla \rho_0\|, \|\v_0\|, \delta,f, R) $  a generic constant.  Observe that, 
from the assumptions on $\xi$, we have that
\begin{align}
%&|\partial_t \xi (s, \varphi_\epsilon(s), \u_\epsilon(s) ) | \leq C(\|\nabla \rho_0\|, \|\v_0\|) \label{DP94}, \\
&\|B_N ^{1/2} \partial_\rho \xi (s, \varphi_\epsilon(s), \u_\epsilon(s))\|\leq C, \label{DP93}\\
&\| A^{1/2} \partial_\v \xi(s, \varphi_\epsilon (s), \u_\epsilon (s))\| \leq C \label{DP92},
\end{align}
for $\epsilon < \epsilon_0$ for some fixed $\epsilon_0.$
Therefore we get from \eqref{DP93}, \eqref{DP95} and Theorem \ref{existance}
\begin{align} \label{DP106}
|\langle B_1(\u_\epsilon(s), \varphi_\epsilon(s)) , &\partial_\rho \xi (s, \varphi_\epsilon(s), \u_\epsilon(s)) \rangle| \no \\
&\leq C \|\u_\epsilon(s)\|^{1/2}\|\nabla \u_\epsilon(s)\|^{1/2} \|\nabla \varphi_\epsilon(s) \| ^{1/2}\|B_N \varphi_\epsilon(s)\|^{1/2}\| \partial_\rho \xi (s, \varphi_\epsilon(s), \u_\epsilon(s)) \| \no \\
&\leq  C\|\nabla \u_\epsilon(s)\| \|\nabla \varphi_\epsilon (s)\|^{1/2}\|B_N^{3/2} \varphi_\epsilon(s)\|^{1/2}\| \partial_\rho \xi (s, \varphi_\epsilon(s), \u_\epsilon(s)) \|\no  \\
& \leq \frac{\lambda}{4} \|\nabla \u_\epsilon(s)\| ^2 + C(\|\nabla \rho_0\|, \|\v_0\|) \|B_N^{3/2} \varphi_\epsilon(s)\| \no \\
&\leq \frac{\lambda}{4} \|\nabla \u_\epsilon(s)\| ^2 + \frac{\lambda}{7}\|B_N^{3/2} \varphi_\epsilon(s)\|^2 + C,
\end{align}
\begin{align}
|\langle B_N^2 \varphi_\epsilon(s),  \partial_\rho \xi (s, \varphi_\epsilon(s), \u_\epsilon(s))  \rangle| &= |\langle B_N^{3/2} \varphi_\epsilon(s), B_N^{1/2} \partial_\rho \xi (s, \varphi_\epsilon(s), \u_\epsilon(s))   \rangle| \no \\
& \leq  \| B_N^{3/2} \varphi_\epsilon(s)\|\| B_N^{1/2} \partial_\rho \xi (s, \varphi_\epsilon(s), \u_\epsilon(s))\| \no \\
& \leq \frac{\lambda}{7} \| B_N^{3/2} \varphi_\epsilon(s)\|^2 + C.
\end{align}
Moreover, using  \eqref{f condition2}, we have
% \begin{align}
% |\langle A_N f( \varphi_\epsilon(s)),  \partial_\rho \xi (s, \varphi_\epsilon(s), \u_\epsilon(s))  \rangle| &= |\langle A_N^{1/2} f( \varphi_\epsilon(s)), B_N^{1/2} \partial_\rho \xi (s, \varphi_\epsilon(s), \u_\epsilon(s))   \rangle| \no  \\
% & \leq C \| B_N^{1/2} \partial_\rho \xi (s, \varphi_\epsilon(s), \u_\epsilon(s))\| (1 + \| \varphi_\epsilon (s)\|^r_{H^2} ) \|\nabla \varphi_\epsilon \| \no \\
% & \leq C.    
% \end{align}
\begin{align} \label{DP147}
   |\langle A_N f( \varphi_\epsilon(s)),  & \partial_\rho \xi (s, \varphi_\epsilon(s), \u_\epsilon(s))  \rangle| \no \\
   &= |\langle A_N^{1/2} f( \varphi_\epsilon(s)), B_N^{1/2} \partial_\rho \xi (s, \varphi_\epsilon(s), \u_\epsilon(s))   \rangle| \no  \\
   & \leq C_f| ((1+|\varphi_\epsilon(s)|^r ) \nabla \varphi_\epsilon (s) , B_N^{1/2} \partial_\rho \xi (s, \varphi_\epsilon(s), \u_\epsilon(s))) | \no \\
   &\leq C \|\varphi_\epsilon (s) \|_{L^{4r}}^r  \|\nabla \varphi_\epsilon(s)\|_{L^4} \|  B_N^{1/2} \partial_\rho \xi (s, \varphi_\epsilon(s), \u_\epsilon(s))\| \no \\
   &\leq C \|\nabla \varphi_\epsilon(s)\|^{\frac{2r-1}{2}} \|\varphi_\epsilon(s)\|^\frac{1}{2} \| \nabla \varphi_\epsilon(s)\|^\frac{1}{2} \|B_N^{3/2} \varphi_\epsilon(s) \|^\frac{1}{2}\|  B_N^{1/2} \partial_\rho \xi (s, \varphi_\epsilon(s), \u_\epsilon(s))\| \no \\
   &\leq \frac{\lambda}{7} \|B_N^{3/2} \varphi_\epsilon(s) \|^2 + C 
\end{align}
Using \eqref{DP92}, Theorem \ref{existance} and Theorem \ref{strongsol}, we have
\begin{align}
|\langle A \u_\epsilon(s), \partial_\v \xi (s, \varphi_\epsilon(s), \u_\epsilon(s)) \rangle| &= |\langle A^{1/2} \u_\epsilon(s), A^{1/2}\partial_\v \xi (s, \varphi_\epsilon(s), \u_\epsilon(s))  \rangle| \no \\
& \leq  \|A^{1/2} \u_\epsilon(s)\| \| A^{1/2}\partial_\v \xi (s, \varphi_\epsilon(s), \u_\epsilon(s))  \| \no \\
%& \leq C \|\nabla \u_\epsilon\| \no \\
& \leq  \frac{\lambda}{4} \|\nabla \u_\epsilon\| ^2 + C,
\end{align}
\begin{align}
|\langle B(\u_\epsilon(s), \u_\epsilon(s)), \partial_\v \xi (s, \varphi_\epsilon(s), \u_\epsilon(s)) \rangle| &= | b(\u_\epsilon(s), \u_\epsilon(s), \partial_\v \xi (s, \varphi_\epsilon(s), \u_\epsilon(s)))| \no \\
& \leq \|\u_\epsilon(s)\| \|\nabla \u_\epsilon(s)\| \|A^{1/2}\partial_\v \xi (s, \varphi_\epsilon(s), \u_\epsilon(s))\| \no \\
%& \leq C \|\nabla \u_\epsilon\|\no  \\
& \leq \frac{\lambda}{4} \|\nabla \u_\epsilon\| ^2 + C,
\end{align}
and
\begin{align}
|\langle B_2(B_N \varphi_\epsilon(s), \varphi_\epsilon(s)), \partial_\v \xi (s, \varphi_\epsilon(s), \u_\epsilon(s)) \rangle| 
&\leq \| B_N \varphi_\epsilon(s)\|_{L^4}\|\nabla  \varphi_\epsilon(s))\| \| \partial_\v \xi (s, \varphi_\epsilon(s), \u_\epsilon(s)) \|_{\mathbb{L}^4} \no \\
&\leq C\| B_N \varphi_\epsilon(s)\|^{1/2}\| B_N^{3/2} \varphi_\epsilon(s)\|^{1/2}\no \\
& \leq \frac{\lambda}{7}\| B_N^{3/2} \varphi_\epsilon(s)\|^2 + C.
\end{align}
We also have
\begin{align}
\frac{1}{\epsilon}  \int_{t_0}^{t_0 + \epsilon}  l ( \varphi_\epsilon(s),  \u_\epsilon(s), \U_\epsilon (s)) ds \leq \frac{C}{\epsilon}  \int_{t_0}^{t_0 + \epsilon}  (1+\|\nabla \varphi_\epsilon(s)\|^k + \| \u_\epsilon(s)\| ^k )ds \leq C .
\end{align}
and
\begin{align}
|\langle \U_\epsilon(s), \partial_\v \xi (s, \varphi_\epsilon(s), \u_\epsilon(s)) \rangle| \leq C.
\end{align}
Similarly, since $\delta \in  C^1([t_0, t_0+\epsilon_0])$, by Theorem \ref{existance} and Theorem \ref{strongsol}, we have for $ s \in [t_0 , t_0 + \epsilon] $
\begin{align}
|{m  \delta(s) (1+\|\nabla \varphi_\epsilon(s)\|^2)^{m-1}}\langle B_N \varphi_\epsilon(s),& B_1(\u_\epsilon(s), \varphi_\epsilon(s))\rangle| \no \\
&\leq C\| B_N \varphi_\epsilon\|_{\L^4} \|\u_\epsilon\|_{\L^4} \|\nabla \varphi_\epsilon \| \no \\
& \leq C\| B_N \varphi_\epsilon\|^{1/2} \|B_N^{3/2} \varphi_\epsilon\|^{1/2}\|\nabla \u_\epsilon\| \|\nabla \varphi_\epsilon \| \no \\
& \leq \frac{\lambda}{7} \| B_N^{3/2} \varphi_\epsilon(s)\|^2 + C,
\end{align}
% \begin{align*}
% |m  \delta(s) (1+\|\nabla \varphi_\epsilon(s)\|^2)^{m-1} &\langle B_N\varphi_\epsilon (s), A_N f(\varphi_\epsilon(s))\rangle|\no \\ &= |m \delta(s)(1+\|\nabla \varphi_\epsilon(s)\|^2)^{m-1} \langle B_N^{3/2} \varphi_\epsilon(s), A_N^{1/2}f(\varphi_\epsilon(s)) \rangle|  \no \\
% & \leq C \|B_N^{3/2} \varphi_\epsilon(s) \| \|A_N^{1/2}f(\varphi_\epsilon(s)) \| \no  \\
% %& \leq C (\|\nabla \rho\|, \|\v\|) \|B_N^{3/2} \varphi_\epsilon(s)\| \no \\
% & \leq \frac{\lambda}{5} \|B_N^{3/2} \varphi_\epsilon(s)\| ^2 + C,
% \end{align*}
Similar to the \eqref{DP147}
\begin{align}
|m  \delta(s) (1+\|\nabla \varphi_\epsilon(s)\|^2)^{m-1} &\langle B_N\varphi_\epsilon (s), A_N f(\varphi_\epsilon(s))\rangle|\no \\ &= |m \delta(s)(1+\|\nabla \varphi_\epsilon(s)\|^2)^{m-1} \langle B_N^{3/2} \varphi_\epsilon(s), A_N^{1/2}f(\varphi_\epsilon(s)) \rangle|  \no \\
&\leq m \delta (s) (1+\|\nabla \varphi_\epsilon(s)\|^2)^{m-1} \|B_N ^{3/2} \varphi_\epsilon(s)\| \|(1+|\varphi_\epsilon(s)|^r \|_{L^4} \|\nabla \varphi_\epsilon (s)\|_{L^4} \no \\
&\leq  \frac{\lambda}{7} \| B_N^{3/2} \varphi_\epsilon(s)\|^2 + C
\end{align}
\begin{align} 
|{m  \delta(s) (1+\| \u_\epsilon(s)\|^2)^{m-1}} \langle \u_\epsilon(s), B_2(B_N\varphi_\epsilon(s), \varphi_\epsilon(s)) \rangle| &\leq C \|\u_\epsilon(s)\|_{L^4} \|B_N\varphi_\epsilon(s)\|_{L^4} \|\nabla \varphi_\epsilon(s)\|\no \\
&\leq C \|\u\|^{1/2}\|\nabla \u_\epsilon(s)\|^{1/2} \|B_N^{3/2}\varphi_\epsilon(s)\| \|\nabla \varphi_\epsilon(s)\|  \no  \\
&\leq  \frac{\lambda}{7} \|B_N^{3/2} \varphi_\epsilon(s) \|^2 + \frac{\lambda}{4} \|\nabla \u_\epsilon(s)\|^2 + C  ,
\end{align}
and
\begin{align} \label{DP107}
|{m  \delta(s) (1+\| \u_\epsilon(s)\|^2)^{m-1}}\langle \u_\epsilon(s), \U_\epsilon(s) \rangle| \leq C\| \u_\epsilon\| \|\U_\epsilon\| \leq C.
\end{align}
Substituting \eqref{DP106}-\eqref{DP107} in \eqref{DP96} we get that 
\begin{align} \label{DP112}
\frac{\lambda}{\epsilon} &\int_{t_0}^{t_0 + \epsilon} \|\nabla \u_\epsilon(s)\|^2 ds + \frac{\lambda}{\epsilon} \int_{t_0}^{t_0 + \epsilon}  \|B_N^{3/2} \varphi_\epsilon(s)\|^2 ds \leq C,
\end{align}
where $C$ is independent of $\epsilon$. Therefore there exist sequences $\{\epsilon_n\}$ and $\{t_n\}$ such that $\epsilon_n \rightarrow 0$ and $t_n \in (t_0, t_0 + \epsilon_n)$ and 
\begin{align*}
\|\nabla \u_\epsilon(t_n)\|^2 + \|\B_N^{3/2}\varphi_\epsilon(t_n)\|^2 \leq C.
\end{align*} 
Thus, there exist $\bar{\v} \in \V_{\mathrm{div}}, \bar{\rho} \in D(B_N^{3/2})$ and a subsequence $\{t_n \}$ along which, we have 
\begin{align*}
&\u_{\epsilon_n}(t_n) \rightarrow \bar{\v} \quad \mathrm{weakly \ in} \ \V_{\mathrm{div}}, \\
&\varphi_{\epsilon_n}(t_n) \rightarrow \bar{\rho} \quad \mathrm{weakly \ in} \ D(B_N^{3/2}).
\end{align*}  
This also implies that 
\begin{align*}
&\u_{\epsilon_n}(t_n) \rightarrow \bar{\v} \quad \mathrm{weakly \ in} \ \G_{\mathrm{div}}, \\
&\varphi_{\epsilon_n}(t_n) \rightarrow \bar{\rho} \quad \mathrm{weakly \ in} \ D(B_N).
\end{align*}  
However, by \eqref{DP74}, \eqref{DP78}, we  have 
\begin{align*}
&\u_{\epsilon_n}(t_n) \rightarrow \v_0 \quad \mathrm{strongly \ in} \ \G_{\mathrm{div}}, \\
&\varphi_{\epsilon_n}(t_n) \rightarrow \rho_0 \quad \mathrm{strongly \ in} \ D(B_N).
\end{align*}
Therefore, by uniqueness of weak limit in $\G_{\mathrm{div}}$ and $ D(B_N)$ it follows that $\v_0 = \bar{\v} \ \in \V_{\mathrm{div}} $, $\rho_0 = \bar{\rho} \in D(B_N^{3/2})$.

{\bf Step 2:} In this step, we show the supersolution inequality. That is, we need to pass to the limit as $\epsilon \rightarrow 0$ in \eqref{DP96} along a subsequence and get 
\begin{align} \label{DP97}
-(&\partial_t  \Psi (t_0, \rho_0, \v_0)-\langle B_1(\v_0, \rho_0)+B_N^2 \rho_0+  A_N f(\rho_0), \partial_\rho \Psi(t_0, \rho_0, \v_0) \rangle \no \\
&-\langle A\v_0 + B(\v_0,\v_0)-B_2(B_N \rho_0, \rho_0) ,\partial_{\v} \Psi(t_0, \rho_0, \v_0) \rangle) \no \\
&+ H (t_0,\rho_0, \v_0,-\partial_\rho \Psi(t_0, \rho_0, \v_0), -\partial_\v \Psi(t_0, \rho_0, \v_0) ) \leq 0.
\end{align}
To prove \eqref{DP97}, we need to estimate the terms in \eqref{DP145}. Let  $g(\epsilon)$ denote a generic modulus of continuity function such that $g(\epsilon)\rightarrow 0 $ as $\epsilon \rightarrow 0$, which may vary within the calculation. Using \eqref{DP127}  and continuity property of $\xi$ we get
\begin{align}
&\Bigg| \frac{1}{\epsilon} \int_{t_0}^{t_0 + \epsilon}\left( \partial_t \xi (s, \varphi_\epsilon (s), \u_\epsilon (s)) - \partial_t \xi (t_0, \rho_0, \v_0) \right) ds \Bigg| \no \\
& \leq \frac{1}{\epsilon} \int_{t_0}^{t_0 + \epsilon}g(|s-t_0|+ \| \varphi_\epsilon(s)-\rho_0\|_{D(B_N^{1/2})} + \|\u_\epsilon (s)-\v_0 \| )ds \leq  g(\epsilon). \label{DP84}
\end{align}
Observe that
\begin{align*}
|\langle B_1(\u_\epsilon(s), \varphi_\epsilon(s)),\partial_\rho \xi &(s, \varphi_\epsilon (s), \u_\epsilon (s)) \rangle-\langle B_1(\v_0,\rho_0),\partial_\rho \xi (t_0, \rho_0, \v_0 )\rangle | \\
\leq & | \langle B_1 ( \u_\epsilon(s) - \v_0, \varphi_\epsilon(s) ) , \partial_\rho \xi (s, \varphi_\epsilon (s), \u_\epsilon (s))\rangle | \\
&+|\langle B_1(\v_0, \varphi_\epsilon(s)-\rho_0),\partial_\rho \xi (s, \varphi_\epsilon (s), \u_\epsilon (s)) \rangle | \\
&+|\langle B_1(\v_0,\rho_0),\partial_\rho \xi (s, \varphi_\epsilon (s), \u_\epsilon (s)) -\partial_\rho \xi (t_0, \rho_0, \v_0 )\rangle|.
\end{align*}
Using H\"older's and Gagliardo-Nirenberg inequalities we get 
\begin{align*}
|\langle B_1(\u_\epsilon(s)- \v_0, \varphi_\epsilon(s), & \partial_\rho \xi (s, \varphi_\epsilon (s), \u_\epsilon (s))\rangle | \leq \|\u_\epsilon(s)- \v_0 \|_{\mathbb{L}^4} \|\nabla \varphi_\epsilon(s)) \|_{\L^4} \| \partial_\rho \xi (s, \varphi_\epsilon (s), \u_\epsilon (s)) \|\\
&  \leq C\|\nabla(\u_\epsilon(s)- \v_0 )\| \|\nabla \varphi_\epsilon(s)) \|^{1/2} \| B_N \varphi_\epsilon(s)\|^{1/2} \| \partial_\rho \xi (s, \varphi_\epsilon (s), \u_\epsilon (s)) \|,
\end{align*}
\begin{align*}
|\langle B_1(\v_0, \varphi_\epsilon(s)-\rho_0),\partial_\rho \xi (s, \varphi_\epsilon (s), \u_\epsilon (s)) \rangle | \leq \|\v_0\|_{\mathbb{L}^4}\| \nabla (\varphi_\epsilon(s)-\rho_0)\|_{\L^4} \|\partial_\rho \xi (s, \varphi_\epsilon (s), \u_\epsilon (s)) \|\\
\leq C \|\nabla \v_0\| \|\nabla (\varphi_\epsilon(s)-\rho_0)\|^{1/2}\|B_N(\varphi_\epsilon(s)-\rho_0)\|^{1/2} \|\partial_\rho \xi (s, \varphi_\epsilon (s), \u_\epsilon (s)) \|,
\end{align*}
\begin{align*}
| \langle B_1(\v_0,\rho_0)&,\partial_\rho \xi (s, \varphi_\epsilon (s), \u_\epsilon (s)) -\partial_\rho \xi (t_0, \rho_0, \v_0 )\rangle| \\
&\leq \|\v_0\|_{\mathbb{L}^4}\|\nabla \rho_0\|_{\L^4} \| \partial_\rho \xi (s, \varphi_\epsilon (s), \u_\epsilon (s)) -\partial_\rho \xi (t_0, \rho_0, \v_0 )\| \\
&\leq C \|\nabla \v_0\| \|\nabla \rho_0\|^{1/2}\|B_N \rho_0\|^{1/2} g(|s-t_0|+ \| \varphi_\epsilon(s)-\rho_0\|_{D(B_N^{1/2})} + \|\u_\epsilon (s)-\v_0 \| ).
\end{align*}
Using  \eqref{DP127}, \eqref{DP78} and continuity properties of $\partial_\rho \xi$, we get
\begin{align}
|\frac{1}{\epsilon} \int_{t_0}^{t_0 + \epsilon}\langle B_1(\u_\epsilon(s), \varphi_\epsilon(s)),\partial_\rho \xi (s, \varphi_\epsilon (s), \u_\epsilon (s)) \rangle-\langle B_1(\v_0,\rho_0),\partial_\rho \xi (t_0, \rho_0, \v_0 )\rangle| ds\leq  g(\epsilon).
\end{align}
Similarly, we get
\begin{align*}
|\langle  A_N f(\varphi_\epsilon(s)), &\partial_\rho \xi(s, \varphi_\epsilon (s), \u_\epsilon (s)) \rangle - \langle A_N f(\rho_0),\partial_\rho \xi (t_0, \rho_0, \v_0 ) \rangle| \no \\
\leq &|\langle A_N f(\varphi_\epsilon(s)) - A_N f(\rho_0), \partial_\rho \xi(s, \varphi_\epsilon (s), \u_\epsilon (s)) \rangle | \\
&+ | \langle A_N f(\rho_0),\partial_\rho \xi(s, \varphi_\epsilon (s), \u_\epsilon (s))-\partial_\rho \xi (t_0, \rho_0, \v_0 )  \rangle|,
\end{align*}
Using \eqref{f condition2} and \eqref{DP93}, we get
\begin{align}
|\langle A_N f&(\varphi_\epsilon(s)) - A_N f(\rho_0), \partial_\rho \xi(s, \varphi_\epsilon (s), \u_\epsilon (s)) \rangle |  \no \\
&= |\langle A_N^{\frac{1}{2}} (f(\varphi_\epsilon(s)) -  f(\rho_0)), B_N^{\frac{1}{2}}\partial_\rho \xi(s, \varphi_\epsilon (s), \u_\epsilon (s)) \rangle | \no \\
& \leq \| A_N^{\frac{1}{2}} (f(\varphi_\epsilon(s)) -  f(\rho_0)) \| \|B_N^{\frac{1}{2}}\partial_\rho \xi(s, \varphi_\epsilon (s), \u_\epsilon (s)) \| \no \\
& \leq   C (1 + \| \varphi_\epsilon (s)\|^{r-1}_{H^2} + \| \rho_0 \|^{r-1}_{H^2}) (\| \varphi_\epsilon (s)\|_{H^2} + \| \rho_0 \|_{H^2}) \|\varphi_\epsilon(s)-\rho_0\|_{H^1} \no \\
& \quad + C (1 + \| \varphi_\epsilon (s)\|_{H^2} + \| \rho_0 \|_{H^2}) \|\varphi_\epsilon(s)-\rho_0\|_{H^1} , 
\end{align}
and 
\begin{align}
| \langle A_N f(\rho_0), \partial_\rho \xi(s, \varphi_\epsilon (s),& \u_\epsilon (s))-\partial_\rho  \Psi (t_0, \rho_0, \v_0 )  \rangle| \no \\
&= |\langle A_N^{\frac{1}{2}} f(\rho_0),B_N^{\frac{1}{2}}(\partial_\rho \xi(s, \varphi_\epsilon (s), \u_\epsilon (s))-\partial_\rho \xi (t_0, \rho_0, \v_0 )  )\rangle|\no \\
&\leq \|A_N^{\frac{1}{2}} f(\rho_0) \| \| B_N^{\frac{1}{2}}(\partial_\rho \xi(s, \varphi_\epsilon (s), \u_\epsilon (s))-\partial_\rho \xi (t_0, \rho_0, \v_0 )) \| \no \\
& \leq C (1+ \|\rho_0\|_{H^1}^r) \|\rho_0\|_{H^2} g(|s-t_0|+ \| \varphi_\epsilon(s)-\rho_0\|_{D(B_N^{1/2})} + \|\u_\epsilon (s)-\v_0 \| ).
\end{align}
Hence, we get
\begin{align}
\frac{1}{\epsilon} \int_{t_0}^{t_0+\epsilon}|\langle A_N f(\varphi_\epsilon(s)), &\partial_\rho \xi(s, \varphi_\epsilon (s), \u_\epsilon (s)) \rangle - \langle A_N f(\rho_0),\partial_\rho \xi (t_0, \rho_0, \v_0 ) \rangle|ds \leq  g(\epsilon).
\end{align}
Using the continuity of $A^\frac{1}{2} \partial_\v \xi$, we estimate the following, 
\begin{align*}
&|\langle A\u_\epsilon (s) ,\partial_\v \xi (s, \varphi_\epsilon (s), \u_\epsilon (s) \rangle  - \langle A\v_0, \partial_\v \xi (t_0, \rho_0, \v_0\rangle | \\ 
& \leq |\langle A(\u_\epsilon (s) -\v_0),\partial_\v \xi (s, \varphi_\epsilon (s), \u_\epsilon (s) \rangle| + | \langle A\v_0,\partial_\v \xi (s, \varphi_\epsilon (s), \u_\epsilon (s)- \partial_\v \xi (t_0, \rho_0, \v_0\rangle |  \\
& \leq |\langle A^\frac{1}{2}(\u_\epsilon (s) -\v_0),A^\frac{1}{2}\partial_\v \xi (s, \varphi_\epsilon (s), \u_\epsilon (s) )\rangle| + | \langle A^\frac{1}{2}\v_0, A^\frac{1}{2} (\partial_\v \xi (s, \varphi_\epsilon (s), \u_\epsilon (s)- \partial_\v \xi(t_0, \rho_0, \v_0) ) \rangle |  \\
& \leq \| \nabla  (\u_\epsilon (s) -\v_0)\| \|A^\frac{1}{2}\partial_\v \xi (s, \varphi_\epsilon (s), \u_\epsilon (s))  \| + \|\nabla \v_0\| \|A^\frac{1}{2} (\partial_\v \xi (s, \varphi_\epsilon (s), \u_\epsilon (s)- \partial_\v \xi (t_0, \rho_0, \v_0) )\| \\
&\leq C \| \nabla  (\u_\epsilon (s) -\v_0)\| + \|\nabla \v_0\| g (|s-t_0| + \|\varphi_\epsilon(s)-\rho_0\| +\|\u_\epsilon(s)-\v_0\|) ,
\end{align*}
which, using \eqref{DP127} and \eqref{DP78} implies
\begin{align}
\Bigg| \frac{1}{\epsilon} \int_{t_0}^{t_0 + \epsilon}(\langle A\u_\epsilon (s) ,\partial_\v \xi (s, \varphi_\epsilon (s), \u_\epsilon (s) \rangle  - \langle A\v_0, \partial_\v \xi (t_0, \rho_0, \v_0\rangle) ds \Bigg| \leq g(\epsilon).
\end{align}
Now consider the trilinear form
\begin{align*}
&|\langle B(\u_\epsilon(s),\u_\epsilon(s)), \partial_\v \xi (s, \varphi_\epsilon (s), \u_\epsilon (s)) \rangle -\langle B(\v_0,\v_0), \partial_\v \xi (t_0,\rho_0,\v_0) \rangle | \\
&\leq |\langle B(\u_\epsilon(s)-\v_0,\u_\epsilon(s)), \partial_\v \xi (s, \varphi_\epsilon (s), \u_\epsilon (s)) \rangle| + |\langle B(\v_0,\u_\epsilon(s)- \v_0), \partial_\v \xi (s, \varphi_\epsilon (s), \u_\epsilon (s)) \rangle | \\
& + |\langle B(\v_0,\v_0), \partial_\v \xi (s, \varphi_\epsilon (s), \u_\epsilon (s)) -\partial_\v \xi (t_0,\rho_0,\v_0)\rangle |
\end{align*}
We observe that 
\begin{align*}
|\langle B(\u_\epsilon(s)-\v_0,\u_\epsilon(s)), \partial_\v \xi (s, \varphi_\epsilon (s), \u_\epsilon (s)) \rangle|  \leq C \|\nabla(\u_\epsilon(s)-\v_0) \| \|\nabla \u_\epsilon(s)\|\| A^{\frac{1}{2}} \partial_\v \xi (s, \varphi_\epsilon (s), \u_\epsilon (s))\|,
\end{align*}
\begin{align*}
|\langle B(\v_0,\u_\epsilon(s)- \v_0), \partial_\v \xi (s, \varphi_\epsilon (s), \u_\epsilon (s)) \rangle | \leq C\|\nabla \v_0\| \|\nabla(\u_\epsilon(s)-\v_0) \| \| A^{\frac{1}{2}} \partial_\v \xi (s, \varphi_\epsilon (s), \u_\epsilon (s))\|,
\end{align*}
and
\begin{align*}
|\langle B(\v_0,\v_0), &\partial_\v \xi (s, \varphi_\epsilon (s), \u_\epsilon (s)) -\partial_\v \xi (t_0,\rho_0,\v_0)\rangle | \\
&\leq C \|\v_0\| \|\nabla \v_0\|\|A^{\frac{1}{2}} \left( \partial_\v \xi (s, \varphi_\epsilon (s), \u_\epsilon (s)) -\partial_\v \xi (t_0,\rho_0,\v_0) \right)\| \\
& \leq C \|\v_0\| \|\nabla \v_0\| g(|s-t_0| + \|\varphi_\epsilon(s) - \rho_0)\| \| \u_\epsilon(s)-\v_0\|).
\end{align*}
It follows from \eqref{DP127} and \eqref{DP78} that
\begin{align}
\left| \frac{1}{\epsilon} \int_{t_0}^{t_0 + \epsilon}\langle B(\u_\epsilon(s),\u_\epsilon(s)), \partial_\v \xi (s, \varphi_\epsilon (s), \u_\epsilon (s)) \rangle -\langle B(\v_0,\v_0), \partial_\v \xi (t_0,\rho_0,\v_0) \rangle \right| \leq g(\epsilon).
\end{align}
Now consider
\begin{align*}
|\langle &B_2 (B_N \varphi_\epsilon(s), \varphi_\epsilon(s)), \partial_\v \xi (s, \varphi_\epsilon (s), \u_\epsilon (s)) \rangle - \langle B_2(B_N \rho_0, \rho_0),\partial_\v \xi (t_0, \rho_0, \v_0) \rangle| \\
\leq& |\langle B_2(B_N (\varphi_\epsilon(s)-\rho_0), \varphi_\epsilon(s)), \partial_\v \xi(s, \varphi_\epsilon (s), \u_\epsilon (s))  \rangle|+ |\langle B_2(B_N  \rho_0,\varphi_\epsilon(s)-\rho_0), \partial_\v \xi(s, \varphi_\epsilon (s), \u_\epsilon (s))\rangle| \\
& + |\langle B_2(B_N  \rho_0,\rho_0), \partial_\v \xi(s, \varphi_\epsilon (s), \u_\epsilon (s))-\partial_\v \xi(t_0, \rho_0, \v_0)  \rangle|.
\end{align*}
From H\"older's and Gagliardo-Nirenberg inequality we obtain
\begin{align*}
|\langle B_2(B_N (\varphi_\epsilon(s)-\rho_0)&, \varphi_\epsilon(s)), \partial_\v \xi(s, \varphi_\epsilon (s),  \u_\epsilon (s))  \rangle|\\
&\leq \|B_N (\varphi_\epsilon(s)-\rho_0) \| \|\nabla \varphi_\epsilon(s) \|_{L^4} \| \partial_\v \xi(s, \varphi_\epsilon (s), \u_\epsilon (s))\|_{\mathbb{L}^4} \\ 
& \leq \|B_N (\varphi_\epsilon(s)-\rho_0) \| \|\varphi_\epsilon(s) \|^{1/2}_{H^1} \| \varphi_\epsilon(s)\|_{H^2}^{1/2} \|\partial_\v \xi (s, \varphi_\epsilon (s), \u_\epsilon (s)) \|_{\V},
\end{align*}
\begin{align*}
|\langle B_2(B_N  &\rho_0,\varphi_\epsilon(s)-\rho_0), \partial_\v \xi(s, \varphi_\epsilon (s), \u_\epsilon (s)) \rangle|\\
&\leq \|B_N  \rho_0\| \|\nabla ( \varphi_\epsilon(s)-\rho_0)\|_{L^4} \| \partial_\v \xi (s, \varphi_\epsilon (s), \u_\epsilon (s)\|_{L^4} \\
&\leq \|B_N  \rho_0\| \|\nabla ( \varphi_\epsilon(s)-\rho_0)\|^{1/2} \| \|B_N(\varphi_\epsilon(s)-\rho_0)\|^{1/2} \| \partial_\v \xi (s, \varphi_\epsilon (s), \u_\epsilon (s)\|_{\V},
\end{align*}
and
\begin{align*}
&|\langle B_2(B_N  \rho_0,\rho_0), \partial_\v \xi (s, \varphi_\epsilon (s), \u_\epsilon (s))-\partial_\v \xi (t_0, \rho_0, \v_0)  \rangle| \\
&\leq \|B_N  \rho_0 \| \|\nabla \rho_0\|_{L^4} \|\partial_\v \xi(s, \varphi_\epsilon (s), \u_\epsilon (s))-\partial_\v \xi (t_0, \rho_0, \v_0)  \|_{L^4}\\
&\leq \|B_N  \rho_0 \| \|\nabla \rho_0\|^{1/2}\|B_N \rho_0\|^{1/2} g(|s-t_0|+ \| \varphi_\epsilon(s)-\rho_0\|_{D(B_N^{1/2})} + \|\u_\epsilon (s)-\v_0 \| ).
\end{align*}
Combining above three estimates and using \eqref{DP127}, \eqref{DP78}, we get 
\begin{align} 
|\frac{1}{\epsilon} \int_{t_0}^{t_0 + \epsilon}\langle B_2& (B_N \varphi_\epsilon(s), \varphi_\epsilon(s)), \partial_\v \xi (s, \varphi_\epsilon (s), \u_\epsilon (s)) \rangle - \langle B_2(B_N \rho_0, \rho_0),\partial_\v \xi (t_0, \rho_0, \v_0) \rangle|  \leq C g(\epsilon). \label{DP85}
\end{align}
Now we estimate the terms in \eqref{DP96} involving $\delta$. We can estimate
\begin{align}
&\left| \frac{2m}{\epsilon} \int_{t_0}^{t_0 + \epsilon} \delta(s) (1 +\| \u_\epsilon(s)\|^2 )^{m-1} \langle A\u_\epsilon(s), \u_\epsilon(s)\rangle - \delta(t_0) (1 +\| \v\|^2 )^{m-1}\langle A\v_0, \v_0\rangle ) ds\right| \no \\
& \leq  \frac{2m}{\epsilon} \int_{t_0}^{t_0 + \epsilon} |\delta (s) (1 +\| \u_\epsilon(s)\|^2 )^{m-1} \langle A^{\frac{1}{2}} \u_\epsilon(s), A^{\frac{1}{2}} (\u_\epsilon(s)-\v_0)\rangle|ds \no  \\
& \quad + \frac{2m}{\epsilon} \int_{t_0}^{t_0 + \epsilon} |\delta(s)  (1 +\| \u_\epsilon(s)\|^2 )^{m-1}\langle A^{\frac{1}{2}}(\u_\epsilon(s)-\v_0), A^{\frac{1}{2}}\v_0 \rangle| ds \no \\
& \quad + \frac{2m}{\epsilon} \int_{t_0}^{t_0 + \epsilon} |\delta(s) ((1 +\| \u_\epsilon(s)\|^2 )^{m-1} - (1 +\| \v_0\|^2 )^{m-1}) \langle A\v_0, \v_0 \rangle| ds \no \\
& \quad +  \frac{2m}{\epsilon}\int_{t_0}^{t_0 + \epsilon} |(\delta(s)-\delta(t_0))  (1 +\| \v_0\|^2 )^{m-1} \langle A\v_0, \v_0 \rangle | ds \no \\
& \leq \frac{C}{\epsilon} \int_{t_0}^{t_0+\epsilon} \|\nabla \u_\epsilon(s)\| \| \nabla (\u_\epsilon(s)-\v_0)\|ds +\frac{C}{\epsilon} \int_{t_0}^{t_0 + \epsilon}\| \nabla(\u_\epsilon(s) - \v_0)\|\|\nabla\v_0\|ds \no \\
& \quad + \frac{C}{\epsilon} \int_{t_0}^{t_0 + \epsilon} \| \u_\epsilon(s) - \v_0\| \|\nabla \v_0\|^2 +g(\epsilon) \no \\
& \leq g(\epsilon),
\end{align}
where we used $a^m-b^n = (a-b)(a^{m-1} + a^{m-2}b + .. + b^{m-1} )$ to estimate the third term
Using the fact that $f \in C^3(\R)$ and $ \delta \in C^1([t_0, t_0+\epsilon]) $, we get
% \textcolor{red}{here in 4.35 too we have to add and subtract to get less than g epsilon isnt it?}
\begin{align}
& \Big| \frac{2m}{\epsilon} \int_{t_0}^{t_0 + \epsilon}(\delta(s) (1+ \|\nabla \varphi_{\epsilon}(s)\|^2)^{m-1} \langle B_N \varphi_\epsilon(s), A_N f(\varphi_\epsilon(s)) \rangle \no \\
& \hspace{2cm}- \delta(t_0) (1+ \|\nabla \rho_0 \|^2)^{m-1}\langle B_N \rho_0, A_N f(\rho_0) \rangle ) ds) \Big| 
%& \leq \frac{2}{\epsilon} \int_{t_0}^{t_0 + \epsilon} |\delta(s) \langle B_N \varphi_\epsilon(s), A_N f(\varphi_\epsilon(s))-f(\rho_0) \rangle | + \frac{2}{\epsilon} \int_{t_0}^{t_0 + \epsilon} |\delta(s) \langle B_N(\varphi_\epsilon(s)-\rho_0), A_N f(\rho_0) \rangle|\no \\
%& \quad + \frac{2}{\epsilon} \int_{t_0}^{t_0 + \epsilon} + |\delta(s)-\delta(t_0))\langle B_N \rho_0, A_Nf(\rho_0) \rangle|\no \\
%& \leq \frac{2}{\epsilon} \int_{t_0}^{t_0 + \epsilon} (\|B_N \varphi_\epsilon(s)\| \|B_N(\varphi(s)-\rho_0)\| + C \|B_N(\varphi(s)-\rho_0)\| \|B_N \rho_0\|) ds + g(\epsilon) \no \\
\leq g(\epsilon).
\end{align} 
%\begin{align} 
%\frac{1}{\epsilon} \int_{t_0}^{t_0 + \epsilon} |\delta(s)& \langle B_N \varphi_\epsilon(s), A_N f(\varphi_\epsilon(s)) \rangle - \delta(t_0) \langle B_N \rho_0, A_N f(\rho_0) \rangle | ds \leq g(\epsilon)
%\end{align}
%
%\begin{align*}
%|\delta(s)& \langle B_N \varphi_\epsilon(s), A_N f(\varphi_\epsilon(s)) \rangle - \delta(t_0) \langle B_N \rho_0, A_N f(\rho_0) \rangle | \\
%& \leq |\delta(s) \langle B_N \varphi_\epsilon(s), A_N f(\varphi_\epsilon(s))-f(\rho_0) \rangle | + |\delta(s) \langle B_N(\varphi_\epsilon(s)-\rho_0), A_N f(\rho_0) \rangle| \\
%& \quad +  |\delta(s)-\delta(t_0))\langle B_N \rho_0, A_Nf(\rho_0) \rangle| \\
%& \leq C \|B_N \varphi_\epsilon(s)\| \|B_N(\varphi(s)-\rho_0)\| + C \|B_N(\varphi(s)-\rho_0)\| \|B_N \rho_0\| + |\delta(s)-\delta(t_0)|\|B_N \rho_0\|^2  
%\end{align*}
%\begin{align*}
%\frac{1}{\epsilon}\int_{t_0}^{t_0+ \epsilon}|\delta(s)-\delta(t_0)| \|B_N^{3/2}\rho_0\|^2 ds \leq g(\epsilon).
%\end{align*}
Similarly, we obtain
\begin{align}
\frac{2m}{\epsilon} \Big| \int_{t_0}^{t_0 + \epsilon}   (\delta(s) (1+\|\nabla \varphi_\epsilon(s)\|^2)^{m-1} \langle B_N \varphi_\epsilon(s), B_1(\u_\epsilon(s), \varphi_\epsilon(s))\rangle \no \\
- \delta(t_0) (1 + \|\nabla \rho_0\|^2)^{m-1} \langle B_N\rho_0, B_N(\v_0, \rho_0) \rangle )ds \Big| \leq g(\epsilon),
\end{align}
\begin{align}
\frac{2m}{\epsilon} \Big| \int_{t_0}^{t_0+\epsilon} ( \delta(s) (1+\|\u_\epsilon (s)\|^2)^{m-1} \langle \u_\epsilon(s),  B_2(B_N \varphi_\epsilon(s), \varphi_\epsilon(s)) \rangle \no  \\
- \delta(t_0) (1 + \|\v_0\|^2)^{m-1} \langle \v_0,  B_2(B_N \rho_0, \rho_0) \rangle ) ds \Big| \leq g(\epsilon), 
\end{align}
\begin{align}
\frac{1}{\epsilon} \Big| &\int_{t_0}^{t_0+\epsilon} (\delta'(s) (1 +\| \u_\epsilon(s)\|^2 )^m \|\u_\epsilon(s)\|^2 - \delta'(t_0) (1 +\| \v\|^2 )^m \|\v_0\|^2 )ds \Big|
%\\
%&\leq \frac{1}{\epsilon} \int_{t_0}^{t_0+\epsilon} |\delta'(s)| (\|\u_\epsilon(s)\|^2 -\|\v_0\|^2)ds +\frac{1}{\epsilon} \int_{t_0}^{t_0+\epsilon} |\delta'(s)-\delta'(t_0)| \|\v_0\|^2 ds 
\leq g(\epsilon),
\end{align}
\begin{align}
\frac{1}{\epsilon} \Big| &\int_{t_0}^{t_0+\epsilon} (\delta'(s)(1+ \|\nabla \varphi_{\epsilon}(s)\|^2)^m \|\nabla \varphi_\epsilon(s)\|^2 - \delta'(t_0) (1+ \|\nabla \rho_0\|^2)^m\|\nabla \rho_0\|^2)ds \Big|  
%\no \\
%&\leq \frac{1}{\epsilon} \int_{t_0}^{t_0+\epsilon} |\delta'(s)| (\|\nabla\varphi_\epsilon(s)\|^2 -\|\nabla \rho\|^2)ds +\frac{1}{\epsilon} \int_{t_0}^{t_0+\epsilon} |\delta'(s)-\delta'(t_0)| \|\nabla \rho\|^2 ds 
\leq g(\epsilon).
\end{align}
and 
\begin{align}
&\left| 	\frac{1}{\epsilon} \int_{t_0}^{t_0 + \epsilon} [ l(s, \varphi_{\epsilon}(s), \u_{\epsilon}, \U_\epsilon(s)) - l (t_0, \rho_0, \v_0, \U_\epsilon(s)) ]ds \right| \no \\
&\leq \frac{1}{\epsilon}  \int_{t_0}^{t_0 + \epsilon} (\sigma_R(|s-t_0|) + L_R (\|\varphi_{\epsilon}(s)-\rho_0\|_{D(B_N^{1/2})} + \| \u_{\epsilon}(s)-\v_0 \|) ) ds \leq g(\epsilon)
\end{align}
%	\begin{align}
%		\frac{1}{\epsilon}  \Big| \int_{t_0}^{t_0 + \epsilon} \left( \|\varphi_\epsilon(s)\|^2 - \|\rho_0\|^2 + \| \u_\epsilon(s)\|^2 -\|\v_0\|^2 \right)ds \Big| \leq g(\epsilon). 
%	\end{align}
% It can be done using the same proof as 4.24
Now we deal with the higher order term. Observe that by \eqref{DP96}
\begin{align*}
\frac{1}{\epsilon} \Bigg\|\int_{t_0}^{t_0+\epsilon} \sqrt{\delta(s)}
&(1+ \|\nabla \varphi_{\epsilon}(s)\|^2)^{\frac{m-1}{2}} B_N^{3/2} \varphi_{\epsilon}(s) ds\Bigg\|^2 \\
&\leq \frac{1}{\epsilon} \int_{t_0}^{t_0+\epsilon} \delta(s) (1+ \|\nabla \varphi_{\epsilon}(s)\|^2)^{m-1}\|B_N^{3/2} \varphi_{\epsilon}(s)\|^2   ds \leq C  .
\end{align*}
Therefore, there exists  a sequence $ \epsilon_n \rightarrow 0 $ and $\rho \in H $ such that 
\begin{align*}
\Pi_n = \frac{1}{\epsilon_n} \int_{t_0}^{t_0+\epsilon_n} \sqrt{\delta(s)}(1+ \|\nabla \varphi_{\epsilon_n}(s)\|^2)^{m-1} B_N^{3/2} \varphi_{\epsilon_n}(s) ds \rightharpoonup \rho \quad \mathrm{in} \ H . 
\end{align*}
However, as $ n \rightarrow \infty $, we have
\begin{align*}
B_N^{-3/2} \Pi_n = \frac{1}{\epsilon_n} \int_{t_0}^{t_0+\epsilon_n} \sqrt{\delta(s)} \, (1+ \|\nabla \varphi_{\epsilon_n}(s)\|^2)^{m-1}\varphi_{\epsilon_n}(s) ds \rightarrow \sqrt{\delta(t_0)}\,(1+ \|\nabla \rho_0\|^2)^{m-1}  \rho_0 , 
\end{align*}
strongly in $ H $. Therefore, it follows that
\begin{align*}
\rho = \sqrt{\delta(t_0)}(1+ \|\nabla \rho_0\|^2)^{m-1} B_N^{3/2} \rho_0.
\end{align*}
Then, we get 
\begin{align}
\liminf_{n \rightarrow \infty} \frac{1}{\epsilon_n} \int_{t_0}^{t_0+\epsilon_n} \delta(s)(1+ \|\nabla \varphi_{\epsilon_n}(s)\|^2)^{m-1} \|B_N^{3/2} \varphi_{\epsilon_n}(s)\|^2 ds \geq  \delta(t_0) (1+ \|\nabla \rho_0\|^2)^{m-1}\| B_N^{3/2} \rho_0\|^2.
\end{align}
Using the same argument we can get 
\begin{align*}
\frac{1}{\epsilon_n} \int_{t_0}^{t_0+\epsilon_n} B_N^{3/2} \varphi_{\epsilon_n}(s) ds \rightharpoonup  B_N^{3/2} \rho_0 \quad \mathrm{ \ in \ } H, 
\end{align*}
as $ n \rightarrow \infty $. Using the above convergence and  \eqref{DP112} we also get
\begin{align}
& \frac{1}{\epsilon_n} \Big| \int_{t_0}^{t_0+\epsilon_n} ( \langle B_N^2\varphi_{\epsilon_n}(s), \partial_\rho \xi(s, \varphi_{\epsilon_n} (s), \u_\epsilon (s))\rangle - \langle B_N^2\rho_0 , \partial_\rho \xi (t_0, \rho_0, \v_0 )\rangle )ds \Big|\no \\
&\leq \frac{1}{\epsilon_n} \int_{t_0}^{t_0+\epsilon_n} |\langle B_N^{3/2}\varphi_{\epsilon_n}(s), B_N^{1/2}(\partial_\rho \xi(s, \varphi_{\epsilon_n} (s), \u_\epsilon (s)) - \partial_\rho \xi (t_0, \rho_0, \v_0 ) )\rangle| ds \no \\
& \quad + \frac{1}{\epsilon_n}  \int_{t_0}^{t_0+\epsilon_n} |\langle B_N^{3/2}\varphi_{\epsilon_n}(s)-B_N^{3/2}\rho_0 , \partial_\rho \xi (t_0, \rho_0, \v_0 ) \rangle |ds  \no \\
& \leq  \frac{1}{\epsilon_n} \int_{t_0}^{t_0+\epsilon_n} \|B_N^{3/2}\varphi_{\epsilon_n}(s)\| g(|s-t_0| + \|\varphi_{\epsilon_n} (s)
-\rho_0\|_{D(B_N^{1/2})} + \|\u_{\epsilon_n}(s) - \v_0 \| ) ds \no \\
& \quad + \frac{1}{\epsilon_n} | \int_{t_0}^{t_0+\epsilon_n} \langle B_N^{3/2}\varphi_{\epsilon_n}(s)-B_N^{3/2}\rho_0 , \partial_\rho \xi (t_0, \rho_0, \v_0 ) \rangle ds |  \rightarrow 0 \ \mathrm{ \ as \ } n \rightarrow \infty.\label{DP108}
\end{align}
Using \eqref{DP84}-\eqref{DP108} in \eqref{DP96} we get
\begin{align*}
&-\Psi_t (t_0, \rho_0, \v_0) + \langle B_1(\v_0, \rho_0)+B_N^2 \rho_0 +  A_N f(\rho_0), \partial_\rho \Psi(t_0, \rho_0, \v_0) \rangle \\
& +\langle A\v_0 + B(\v_0,\v_0)-B_2(B_N \rho_0, \rho_0) ,\partial_{\v} \Psi(t_0, \rho_0, \v_0) \rangle \\
&+\frac{1}{2}(\|\rho_0\|^2+ \|\v_0\|^2)+ \frac{1}{\epsilon} \int_{t_0}^{t_0+\epsilon} [\langle \U_\epsilon(s), -\partial_\v \Psi(t_0, \rho_0, \v_0)  \rangle +\|\U_\epsilon(s) \|^2] ds \leq g(\epsilon).
\end{align*}
Now taking infimum inside the integral over $ \U \in \mathcal{U}_{R} $ and letting $\epsilon \rightarrow 0$ we get \eqref{DP97}.

{\bf Step 3:} In this step we  want to show the inequality  \eqref{HJBmax1},  which will prove that  $ \mathcal{V} $ is a viscosity subsolution of \eqref{HJB1}. Let $\mathcal{V}-\Psi$ attains global maximum at $(t_0, \rho_0, \v_0)$. As in the step 1, we can show that the point of maxima lies in a better regular space. Then
\begin{align}\label{DP2}
\mathcal{V}(t, \rho, \v) - &\mathcal{V}(t_0, \rho_0, \v_0) \leq \xi (t, \rho, \v) - \xi (t_0, \rho_0, \v_0) \no \\
&+ \delta(t)[(1+\|\nabla \rho \|^2)^m +(1+ \|\v\|^2)^m] - \delta(t_0) [(1+\|\nabla \rho_0 \|^2)^m + (1+\|\v_0\|^2)^m].
\end{align}
From the dynamic programming principle Theorem \ref{DPP} we have
\begin{align} \label{DP3}
\mathcal{V} (t_0, \rho_0, \v_0) \leq \frac{1}{2} \int_{t_0}^{t_0+\epsilon} \left(\|\varphi(s)\|^2+ \|\u(s)\|^2 + \|\U(s)\|^2 \right)ds +\mathcal{V} (t_0 + \epsilon, \varphi(t_0 + \epsilon), \u(t_0 + \epsilon)) .
\end{align}
Substituting \eqref{DP3} in \eqref{DP2} we get
\begin{align*} 
&\xi(t_0, \rho_0, \v_0)-\xi(t_0+\epsilon, \varphi(t_0+\epsilon), \u (t_0 + \epsilon)) +\delta(t_0)[ (1+\|\nabla \rho_0 \|^2)^m + (1+ \|\v_0\|^2)^m] \\
&- \delta(t_0+\epsilon)[(1+\|\nabla\varphi(t_0+\epsilon) \|^2)^m +(1+ \|\u(t_0+\epsilon)\|^2)^m ]
\leq \frac{1}{2} \int_{t_0}^{t_0+\epsilon} \left(\|\varphi(s)\|^2+ \|\u(s)\|^2 + \|\U(s)\|^2 \right)ds.
\end{align*}
Using chain rule, we get for a constant control $ \U \in \mathcal{U}_{R} $,
\begin{align*}
\frac{1}{2} &\int_{t_0}^{t_0+\epsilon} l \left( \varphi(s), \u(s), \U \right)dt \\
\geq&-\int_{t_0}^{t_0 + \epsilon} \left( \partial_t \xi(s, \varphi (s), \u (s))+ \langle \partial_t\varphi(s) , \partial_\rho \xi(s, \varphi (s), \u (s))\rangle + \langle \partial_t \u(s), \partial_\v \xi(s, \varphi (s), \u (s))\rangle  \right) ds \no \\
& -\int_{t_0}^{t_0 + \epsilon} \delta'(s)(1+\|\nabla \varphi(s)\|^2)^m ds -\int_{t_0}^{t_0+\epsilon} m \delta(s) (1+\|\nabla \varphi(s)\|^2)^{m-1} \langle B_N \varphi(s), \partial_t\varphi(s) \rangle ds  \no \\
& - \int_{t_0}^{t_0 + \epsilon} \delta'(s) (1+\|\u(s)\|^2)^m ds -  \int_{t_0}^{t_0 + \epsilon} m \delta(s) (1+\|\u(s)\|^2)^{m-1} \langle \u(s), \partial_t\u(s) \rangle ds. 
\end{align*}
That is,
\begin{align*}
\frac{1}{2} &\int_{t_0}^{t_0+\epsilon} l \left( \varphi(s), \u(s), \U \right)ds \\
\geq  &-\int_{t_0}^{t_0 + \epsilon}  \partial_t \xi(s, \varphi (s), \u (s))ds\\
& +\frac{1}{\epsilon} \int_{t_0}^{t_0+\epsilon}  \left\langle B_1(\u(s), \varphi(s))+B_N^2\varphi(s)+A_Nf(\varphi(s)), \,  \partial_\rho \xi(s, \varphi (s), \u (s)) \right\rangle ds\\
& + \frac{1}{\epsilon} \int_{t_0}^{t_0+\epsilon} \langle A\u(s) + B(\u(s),\u(s)) - B_2(B_N \varphi(s), \varphi(s))-\U(s), \, \partial_\v \xi(s, \varphi (s), \u (s)) \rangle ds \\
&- \frac{1}{\epsilon}\int_{t_0}^{t_0 + \epsilon} \delta'(s)(1+\|\nabla \varphi(s)\|^2)^m ds-  \frac{1}{\epsilon}\int_{t_0}^{t_0 + \epsilon} \delta'(s) (1+\|\u(s)\|^2)^m ds \\
&+ \frac{2m}{\epsilon}\int_{t_0}^{t_0+\epsilon} \delta(s) (1+\|\nabla \varphi(s)\|^2)^{m-1} \langle B_N \varphi(s), \,  B_1(\u(s), \varphi(s))+B_N^2\varphi(s)+A_Nf(\varphi(s))\rangle ds\\
&+  \frac{2m}{\epsilon} \int_{t_0}^{t_0 + \epsilon}  \delta(s) (1+\|\u(s)\|^2)^{m-1} \langle \u(s), \, A\u (s) + B(\u(s),\u(s)) - B_2(B_N \varphi(s), \varphi(s))-\U(s) \rangle ds.
\end{align*}
Now using similar techniques used in supersolution part (Step 2) and using continuous dependence estimates of trajectories \eqref{DP127} and \eqref{DP78} we deduce
\begin{align*}
&\partial_t \Psi (t_0, \rho_0, \v_0)-\langle B_1(\v_0, \rho_0)+B_N^2 \rho_0 +  A_N f(\rho_0), \partial_\rho \Psi(t_0, \rho_0, \v_0) \rangle \no \\
&-\langle A\v_0 + B(\v_0,\v_0)-B_2(B_N \rho_0, \rho_0) ,\partial_{\v} \Psi(t_0, \rho_0, \v_0) \rangle \\
&+ H (t_0,\rho_0, \v_0,\partial_\rho \Psi(t_0, \rho_0, \v_0), \partial_\v \Psi(t_0, \rho_0, \v_0) ) \geq 0, \
\end{align*}
which gives that $  \mathcal{V} $ is a viscosity subsolution. Hence, $ \mathcal{V} $ is a viscosity solution of \eqref{HJB1} in the sense of Definition \ref{def_visco}.
\end{proof}

\section{Uniqueness}

In this section, we prove the comparison principle, which in turn implies the uniqueness of the viscosity solution. Viscosity solution theory intrinsically provides the well-posedness or uniqueness of solutions for non-linear Hamilton Jacobi type of equations satisfied in the viscosity sense and we exploit this property.

%
%\begin{assumption}
%
%There exists a families of modulii of $\sigma_R$ and constants $L_R$ for $R>0$ such that 
%\begin{itemize}
%\item[(i)] $l : (0,T) \times D(B_N^{1/2}) \times \G_{\mathrm{div}} \rightarrow \R$ is continuous
%\begin{align}
%|l(t, \rho_1, \v_1 ,\U) - l(s, \rho_2, \v_2, \U)| \leq  \sigma_R(|t-s|)+ L_R(\|\rho_1 - \rho_2\|_{D(B_N^{1/2})} + \|\v_1-\v_2\|)
%\end{align}
%\begin{align}
%|l(t, \rho, \v, \U)| \leq C (1 + \|\rho\|^k _{D(B_N^{1/2})} + \|\v\|^k)
%\end{align}
%for $\|\rho_1\|_{D(B_N^{1/2})},\| \rho_2\|_{D(B_N^{1/2})},\|\rho\|_{D(B_N^{1/2})}, \|\v_1\|, \|\v_2\|, \|\v\| \leq R$.
%\item[(ii)] $g : D(B_N^{1/2}) \times \G_{\mathrm{div}} \rightarrow \R$ is such that
%\begin{align*}
%|g(\rho_1,\v_1) -g(\rho_2, \v_2)| \leq L_R(\|\rho_1 - \rho_2 \| + \|\v_1 - \v_2 \|_{\V_{\mathrm{div}}'}),
%\end{align*}
%for $\|\rho_1\|_{D(B_N^{1/2})},\| \rho_2\|_{D(B_N^{1/2})}, \|\v_1\|, \|\v_2\| \leq R$, and
%\begin{align}
%|g(\rho, \v)| \leq C (1 + \| \rho\|^k_{D(B_N^{1/2})} + \|\v\|^k).
%\end{align}
%\end{itemize}
%\end{assumption}
We will assume the following hypothesis on the Hamiltonian $ H $.
\begin{assumption} \label{DP133}
$ H : [0,T] \times D(B^{1/2}_N) \times \G_{{\mathrm{div}}} \times D(B_N^{1/2}) \times \G_{{\mathrm{div}}} \rightarrow \mathbb{R} $, and there exists a modulus of continuity $ \omega $ and $ \omega_K $ such that 
\begin{align} \label{DP139}
|H(t, \rho_1, \v_1, p, \q) - H(t, \rho_2, \v_2, p, \q) |\leq  & \,\omega_K ( \|\nabla (\rho_1-\rho_2) \| )+ \omega_K( \|\v_1 - \v_2 \|  ) \no \\
&+ \omega ( \| \nabla(\rho_1 -\rho_2) \| \|\nabla p \| )+ \omega (\|\v_1 - \v_2\|  \| \q \|) ,
\end{align}
if $\| \rho_1\|_{D(B^{1/2}_N)}  ,\|\rho_2 \|_{D(B^{1/2}_N)} , \|\v_1\|, \|\v_2\|\leq K $,
\begin{align}
|	H(t, \rho, \v, p_1, \q_1) - H(t, \rho, \v, p_2, \q_2) |\leq & \,\omega ( (1+\|\nabla \rho\|)\|\nabla (p_1-p_2) \| ) + \omega ( (1+\|\v\|)\|\q_1-\q_2\| ) ,
\end{align}
\begin{align}
| H(t, \rho, \v, p, \q) - H(s, \rho, \v, p, \q) |  \leq &\,  \omega_K (t-s), \label{DP140}
\end{align}
if $\| \rho_1\|_{D(B^{1/2}_N)}  ,\|\rho_2 \|_{D(B^{1/2}_N)} , \|\v_1\| , \|\v_2\| , \|p\|_{D(B_N^{1/2})}, \|\q\| \leq K $.
\end{assumption}
%We state the following which is easy to prove.
%\begin{lemma}
%Assume that Assumption \ref{l_assumption} satisfies. Then, the Hamiltonian defined in \eqref{Hamiltonian1} is continuous on  satisfies \eqref{DP139}-\eqref{DP140}.
%
%\end{lemma}
Note that the Assumption \ref{DP133} is compatible with the Assumption \ref{l_assumption} on $l$ and $g$. The Next Theorem proves the comparison principle under the Assumption \ref{DP133}.

\begin{theorem} \label{comp_ppl}
Let Assumption \ref{DP133} be satisfied. Let $\mathcal{V}_1, \mathcal{V}_2 : (0, T) \times D(B^{1/2}_N) \times \G_{\mathrm{div}} \rightarrow \R$ be a viscosity subsolution and a viscosity supersolution, respectively, of \eqref{HJB1}. Let
\begin{align*}
\mathcal{V}_1(t, \rho, \v) \leq C(1 + \|\rho\|_{D(B_N^{1/2})}^k + \|\v\|^k),\\
-\mathcal{V}_2(t, \rho, \v) \leq C(1 + \|\rho\|_{D(B_N^{1/2})}^k + \|\v\|^k),
\end{align*}
for some $k>0$ and
\begin{equation} \label{terminal_condition}
\begin{aligned}
\lim_{t \rightarrow T} (\mathcal{V}_1 (t, \rho, \v) -g(\rho, \v))^+ = 0,\\
\lim_{t \rightarrow T} (\mathcal{V}_2 (t, \rho, \v) -g(\rho, \v))^- = 0,
\end{aligned}
\end{equation}
uniformly on bounded subsets of $D(B^{1/2}_N) \times \G_{\mathrm{div}}$. Then $\mathcal{V}_1 \leq \mathcal{V}_2$.
\end{theorem}
The proof of this theorem is industrious hence we divide it  into 3 steps. We employ ideas from  {\cite{MR2141895}} to prove the Theorem. The main idea of the proof is to use a contradiction. We want to prove the comparison of a subsolution and a supersolution;  we assume on the contrary that at a certain point, the comparison is reversed and using the definitions of test functions and the properties of Hamiltonian arrive at a contradiction.
%\begin{itemize}
% \item[Step 1:]  We begin by defining  $ \mathcal{V}_{1}^\beta = \mathcal{V}_{1} - \frac{\beta}{t}$ and $ \mathcal{V}_{2}^\beta = \mathcal{V}_{2} + \frac{\beta}{t} $ and by doubling the variables we define the function $\Phi$ which suitably attains global minima and maxima and also helps in defining the appropriate test functions.
%
% \item[Step 2:] Using properties of Hamiltonian, assumption \ref{DP133}, we choose the parameters $\gamma, \epsilon, \delta$ and $ \beta $ appropriately to arrive at a contradiction.
%\end{itemize}

\begin{proof}
{\bf Step 1:} 
We begin by defining  $ \mathcal{V}_{1}^\beta $ and $ \mathcal{V}_{2}^\beta  $ and by doubling the variables we define the function $\Phi$ which suitably attains global minima and maxima and also helps in defining the appropriate test functions.

For a given $\beta >0$, we define 
\begin{align*}
\mathcal{V}_{1}^\beta  (t, \rho, \v) = \mathcal{V}_1(t,\rho,\v) - \frac{\beta }{t}, \quad \mathcal{V}_{2}^\beta (t, \rho, \v) = \mathcal{V}_2(t,\rho,\v) + \frac{\beta}{t}.
\end{align*}
Then, we see that $\mathcal{V}_{1}^\beta$ and $\mathcal{V}_{2}^\beta$ respectively satisfy
\begin{align}
&\partial_t \mathcal{V}_{1}^\beta - \langle   B_1(\v, \rho) + A_N^2 \rho + A_N f(\rho) ,\partial_\rho \mathcal{V}_{1}^\beta\rangle - \langle A \v + B(\v, \v) - B_2(A_N \rho, \rho ),\partial_\v \mathcal{V}_{1}^\beta \rangle \no \\
&+ H (t,\rho, \v,\partial_\rho \mathcal{V}_1^\beta(t, \rho, \v), \partial_\v \mathcal{V}_1^\beta (t, \rho, \v) )\geq \frac{\beta}{T^2},
\end{align}
and
\begin{align}
&\partial_t \mathcal{V}_{2}^\beta - \langle   B_1(\v, \rho) + A_N^2 \rho + A_N f(\rho) ,\partial_\rho \mathcal{V}_{2}^\beta \rangle - \langle  A \v + B(\v, \v) - B_2(A_N \rho, \rho ),\partial_\v \mathcal{V}_{2}^\beta \rangle \no \\
&+ H (t,\rho, \v,\partial_\rho \mathcal{V}_2^\beta(t, \rho, \v), \partial_\v \mathcal{V}_2^\beta (t, \rho, \v) ) \leq- \frac{\beta}{T^2}.
\end{align}
Let $m>1$ be such that $2m \geq k+1$. For $\epsilon, \delta, \gamma >0$ and $0 < T_\delta <T$, we consider the function 
\begin{align*}
\Phi ( t,s, \rho, \tilde{\rho}, \v, \tilde{\v}) = &\mathcal{V}_1^\beta (t, \rho,\v) - \mathcal{V}_2^\beta(s, \tilde{\rho}, \tilde{\v}) -\frac{1}{2\epsilon} (\|\rho-\tilde{\rho} \|^2 +\|\v-\tilde{\v}\|_{\V'_{\mathrm{div}}}^2) \no \\
&-\delta e^{K_\beta (T-t)}[(1+ \|\nabla  \rho\|^2)^m + (1+\|\v\|^2)^m] \no \\
&-\delta e^{K_\beta (T-s)}[(1+\|\nabla \tilde{\rho}\|^2)^m + (1+\|\tilde{\v}\|^2)^m] - \frac{(t-s)^2}{2 \gamma},
\end{align*}
on $ (0, T_\delta ]  \times D(B_N^{1/2}) \times \G_{\mathrm{div}}$. The choice of $ K_\beta $ will be made precise later on. 
We can prove that $\Phi$ is weakly sequentially upper semicontinuous on $(0,T_\delta] \times D(B_N^{1/2}) \times \G_{\mathrm{div}}$ in the same lines of proof of Theorem 5.2 in \cite{MR2141895}).
Hence, it has a global maximum. Suppose that $ \Phi $ has a global maximum at $(t_0,s_0, \rho_0, \tilde{\rho}_0, \v_0, \tilde{\v}_0)$ where $ 0 < t_0, s_0 $ and  for a fixed $ \delta, \;  \rho_0, \tilde{\rho_0} \in D(B^{1/2}_N) $, $ \v_0 , \tilde{\v}_0  \in \G_{{\mathrm{div}}}$ are bounded independent of $ \epsilon $. Moreover, as in the proof of step 1 \ref{HJBThm} we can show that $ \rho_0, \tilde{\rho_0} \in D(B_N^{3/2}) $, $ \v_0 , \tilde{\v}_0  \in \V_{{\mathrm{div}}}$.
We also observe that
\begin{align}
%	\limsup_{\delta \rightarrow 0} \limsup_{\epsilon \rightarrow 0} \limsup_{\gamma \rightarrow 0} \delta (\|\nabla \rho_0\|^2 + \|\nabla \tilde{\rho}_0\|^2 + \|\v_0\|^2 + \|\tilde{\v}_0\|^2) =0, \label{delta1}\\
%\limsup_{\lambda \rightarrow 0}\limsup_{\epsilon \rightarrow 0} \limsup_{\gamma \rightarrow 0} \lambda(t_0+s_0) =0,\mathrm{for \ fixed \ \delta}, \label{lambda1}\\
\limsup_{\epsilon \rightarrow 0} \limsup_{\gamma \rightarrow 0} \frac{1}{2 \epsilon}(\|\rho_0-\tilde{\rho}_0 \|^2 +\|\v_0-\tilde{\v}_0\|_{\V'_{\mathrm{div}}}^2) = 0 \ \mathrm{for \ fixed \ \delta, \lambda},\label{epsilon1} 
\end{align} 
and 
\begin{align}
\limsup_{\gamma \rightarrow 0}  \frac{(t_0-s_0)^2}{2 \gamma} = 0 \ \mathrm{for \ fixed \ \delta, \epsilon, \lambda}. \label{gamma1}
\end{align}
 If $ \mathcal{V}_1 $ is not less than or equal to $ \mathcal{V}_2 $, then   for small $ \beta $, $ \delta $ and $ T_\delta $ sufficiently close to $ T$, using the assumption \eqref{terminal_condition}, and \eqref{epsilon1}, \eqref{gamma1} 
we can conclude that $ t_0, s_0 < T_\delta $, if $ \epsilon $ and $ \gamma $ are small enough.

Now, we use the fact that $ \mathcal{V}_1^\beta $ and $ \mathcal{V}_2^\beta $ are viscosity subsolution and supersolution, respectively. 
Let us define
\begin{align*}
\Psi_1(t,\rho,\v) =& \mathcal{V}^\beta _2(s_0, \tilde{\rho}_0, \tilde{\v}_0)+\frac{1}{2\epsilon} (\|\rho-\tilde{\rho}_0 \|^2+\|\v-\tilde{\v}_0\|_{\V'_{\mathrm{div}}}^2) \\
&+\delta e^{K_\beta(T-t)} [(1+\|\nabla \rho\|^2)^m+(1+\|\v\|^2)^m] \no \\
&+ \delta e^{K_\beta (T-s_0)} [(1+\|\nabla \tilde{\rho}_0\|^2)^m   + (1+\|\tilde{\v}_0 \|^2)^m] + \frac{(t-s_0)^2}{2\gamma}.
\end{align*}

It follows from the definition of the viscosity subsolution, 
since $ \mathcal{V}^\beta_1 - \Psi_1 $ has maximum at $(t_0, \rho_0, \v_0)  $, we have 
\begin{align}
&\frac{t_0-s_0}{\gamma} -\delta K_\beta e^{K_\beta (T-t_0)}[(1+\|\nabla \rho_0 \|^2)^m + (1+\|\v_0\|^2 )^m] - \langle  B_1(\v_0, \rho_0)+B_N^2 \rho_0 +  A_N f(\rho_0), \frac{1}{\epsilon} (\rho_0-\tilde{\rho}_0) \rangle \no \\
&- \langle  B_1(\v_0,\rho_0 ) + B_N^2 \rho_0 +  A_N f(\rho_0), 2m\delta e^{K_\beta (T-t_0)} (1+\|\nabla \rho_0 \|^2  )^{m-1}B_N \rho_0 \rangle \no \\
& - \langle A\v_0 + B(\v_0,\v_0)-B_2(B_N \rho_0, \rho_0) , \frac{1}{\epsilon}A^{-1}(\v_0-\tilde{\v}_0)\rangle \no \\
&- \langle  A\v_0 + B(\v_0,\v_0)-B_2(B_N \rho_0, \rho_0) ,  2m\delta e^{K_\beta (T-t_0)}(1 + \|\v_0\|^2 )^{m-1}\v_0 \rangle \no \\
&+ H\left(t_0, \rho_0, \v_0, \frac{1}{\epsilon}( \rho_0 - \tilde{\rho}_0) + 2m\delta (1+\|\nabla \rho_0 \|^2 )^{m-1} e^{K_\beta (T-t_0)}B_N \rho_0 , \right. \no \\
&\hspace{2cm}\left. \frac{1}{\epsilon} A^{-1}(\v_0-\tilde{\v}_0) + 2m\delta (1+\|\v_0\|^2 )^{m-1} e^{K_\beta (T-t_0)}\v_0\right) \geq \frac{\beta}{T^2}. \label{DP34}
\end{align}
Similarly, define
\begin{align*}
\Psi_2(s,\tilde{\rho},\tilde{\v}) = & \mathcal{V}^\beta _1(t_0,\rho_0, \v_0)-\frac{1}{2\epsilon}( \|\rho_0 -\tilde{\rho} \|^2+\|\v_0-\tilde{\v}\|_{\V'_{\mathrm{div}}}^2) \\
&-\delta e^{K_\beta(T-t_0)}[(1+\|\nabla \rho_0\|^2)^m + (1+ \|\v_0\|^2)^m] \no \\
&- \delta e^{K_\beta(T-s)}[(1+\|\nabla \tilde{\rho}\|^2)^m+ (1+\|\tilde{\v}\|^2)^m] - \frac{(t_0-s)^2}{2 \gamma}.
\end{align*}

By the definition of viscosity supersolution, since $ \mathcal{V}^\beta_2 + \Psi_2 $ has minimum at $ (s_0, \tilde{\rho}_0, \tilde{\v}_0) $, we get
\begin{align}
&\frac{t_0-s_0}{\gamma} +\delta K_\beta e^{K_\beta(T-s_0)}[(1+\|\nabla \tilde{\rho}_0\|^2)^m+ (1+\|\tilde{\v}_0\|^2 )^m] - \langle B_1(\tilde{\v}_0, \tilde{\rho}_0)+B_N^2 \tilde{\rho}_0 +  A_N f(\tilde{\rho}_0),\frac{1}{\epsilon} (\rho_0-\tilde{\rho}_0) \rangle \no \\
&+ 2m\delta e^{K_\beta(T-s_0)} \langle B_1(\tilde{\v}_0, \tilde{\rho}_0) + B_N^2 \tilde{\rho}_0 +  A_N f(\tilde{\rho}_0), (1+\|\nabla \tilde{\rho}_0\|^2 )^{m-1}  B_N \tilde{\rho}_0 \rangle \no \\
& -\langle A\tilde{\v}_0 + B(\tilde{\v}_0,\tilde{\v}_0)-B_2(B_N \tilde{\rho}_0, \tilde{\rho}_0) , \frac{1}{\epsilon}A^{-1}(\v_0-\tilde{\v}_0) \rangle \no \\
& +2m\delta e^{K_\beta (T-s_0)} \langle A\tilde{\v}_0 + B(\tilde{\v}_0,\tilde{\v}_0)-B_2(B_N \tilde{\rho}_0, \tilde{\rho}_0),(1+\|\tilde{\v}_0\|^2 )^{m-1} \tilde{\v}_0 \rangle \no \\ 
&+ H\left(s_0, \tilde{\rho}_0, \tilde{\v}_0, \frac{1}{\epsilon}( \rho_0 - \tilde{\rho}_0) - 2m\delta e^{K_\beta (T-s_0)}(1+\|\nabla \tilde{\rho}_0\|^2)^{m-1}B_N \tilde{\rho}_0 , \right. \no \\
& \hspace{2cm} \left.\frac{1}{\epsilon}A^{-1}(\v_0-\tilde{\v}_0) - 2m\delta e^{K_\beta(T-s_0)}(1+\|\tilde{\v}_0\|^2 )^{m-1}\tilde{\v}_0 \right)  \leq -\frac{\beta}{T^2}. \label{DP35}
\end{align}

Combining \eqref{DP34} and \eqref{DP35} we get 
\begin{align}
&\delta K_\beta e^{K_\beta (T-t_0)}[(1+\|\nabla \rho_0 \|^2)^m + (1+\|\v_0\|^2 )^m ]+ \delta K_\beta e^{K_\beta(T-s_0)}[(1+\|\nabla \tilde{\rho}_0\|^2)^m+(1+\|\tilde{\v}_0\|^2 )^m] \no \\
&+  \langle B_1(\v_0, \rho_0)-B_1(\tilde{\v}_0, \tilde{\rho}_0) +  A_N f(\rho_0)- A_N f(\tilde{\rho}_0), \frac{1}{\epsilon} ( \rho_0-\tilde{\rho}_0) \rangle  + \frac{1}{\epsilon}\|B_N (\rho_0 - \tilde{\rho}_0)\|^2\no \\
& + 2m\delta e^{K_\beta (T-t_0)}(1+\|\nabla \rho_0 \|^2 )^{m-1}[  \langle B_1(\v_0,\rho_0 )  +  A_N f(\rho_0), B_N \rho_0 \rangle + \|B_N^{3/2} \rho_0\|^2]\no \\
& +2m\delta e^{K_\beta(T-s_0)} (1+\|\nabla \tilde{\rho}_0\|^2)^{m-1} [\langle  B_1(\tilde{\v}_0, \tilde{\rho}_0)  +  A_N f(\tilde{\rho}_0),  B_N \tilde{\rho}_0 \rangle + \|B_N^{3/2} \tilde{\rho}_0\|^2]\no \\
& +\frac{1}{\epsilon} \|\v_0 -\tilde{\v}_0\|^2 + \langle B(\v_0,\v_0)-B(\tilde{\v}_0,\tilde{\v}_0) + B_2(B_N \tilde{\rho}_0, \tilde{\rho}_0)-B_2(B_N \rho_0, \rho_0) , \frac{1}{\epsilon}A^{-1}(\v_0-\tilde{\v}_0) \rangle \no \\
&+ 2m\delta e^{K_\beta (T-t_0)} (1 + \|\v_0\|^2 )^{m-1}[\|\nabla\v_0\|^2 -\langle B_2(B_N \rho_0, \rho_0) , \v_0 \rangle ]\no \\
& +2m\delta e^{K_\beta (T-s_0)}(1+\|\tilde{\v}_0\|^2 )^{m-1} [\|\nabla \tilde{\v}_0 \|^2-\langle B_2(B_N \tilde{\rho}_0, \tilde{\rho}_0),\tilde{\v}_0\rangle ]\no \\ 
& + H\left(s_0, \tilde{\rho}_0, \tilde{\v}_0, \frac{1}{\epsilon}( \rho_0 - \tilde{\rho}_0) - 2m\delta e^{K_\beta (T-s_0)}(1+\|\nabla \tilde{\rho}_0\|^2)^{m-1}B_N \tilde{\rho}_0 ,  \right. \no \\
& \hspace{2cm} \left. \frac{1}{\epsilon}A^{-1}(\v_0-\tilde{\v}_0) - 2m\delta e^{K_\beta(T-s_0)}(1+\|\tilde{\v}_0\|^2 )^{m-1}\tilde{\v}_0 \right) \no \\
& -H\left(t_0, \rho_0, \v_0, \frac{1}{\epsilon}( \rho_0 - \tilde{\rho}_0) + 2m\delta e^{K_\beta (T-t_0)}(1+\|\nabla \rho_0 \|^2  )^{m-1} B_N \rho_0 , \right. \no \\
&\hspace{2cm} \left. \frac{1}{\epsilon} A^{-1}(\v_0-\tilde{\v}_0) + 2m\delta e^{K_\beta (T-t_0)}(1+ \|\v_0\|^2 )^{m-1} \v_0\right) \leq - \frac{2\beta }{T^2} \label{DP109}. 
\end{align}
 To arrive at a contradiction, we need to show that LHS in the above inequality goes to 0 for a fixed $\beta >0$.

{\bf Step 2:}  In this step we  estimate the terms in the left-hand side of \eqref{DP109}. Now onwards, let us denote $\sigma$ to be a local modulus of continuity and $C$  a generic constant. First, we write
\begin{align*}
\frac{1}{\epsilon} \langle B_1(\v_0, \rho_0)-B_1(\tilde{\v}_0, \tilde{\rho}_0),\rho_0-\tilde{\rho}_0 \rangle =\frac{1}{\epsilon}\langle B_1(\v_0-\tilde{\v}_0, \rho_0)-B_1(\tilde{\v}_0, \rho_0-\tilde{\rho}_0),\rho_0-\tilde{\rho}_0 \rangle .
\end{align*}
Observe that $ \langle B_1(\tilde{\v}_0, \rho_0-\tilde{\rho}_0),\rho_0-\tilde{\rho}_0 \rangle =0$. Using Agmon's inequality 
\begin{align}
\frac{1}{\epsilon} |\langle B_1(\v_0-\tilde{\v}_0, \rho_0),\rho_0-\tilde{\rho}_0 \rangle | &\leq \frac{C}{\epsilon} \|\v_0-\tilde{\v}_0\| \| \nabla \rho_0 \| \| \rho_0 -\tilde{\rho}_0\|_{L^\infty} \no \\
& \leq  \frac{C}{\epsilon} \|\v_0-\tilde{\v}_0\| \| \nabla \rho_0 \| \| \rho_0 -\tilde{\rho}_0\|^{1/2} \|B_N(\rho_0 -\tilde{\rho}_0) \|^{1/2} \no \\
& \leq \frac{1}{5\epsilon}\| B_N(\rho_0 -\tilde{\rho}_0) \|^2 + \frac{C}{\epsilon} \|\v_0-\tilde{\v}_0\|^{4/3} \| \rho_0 -\tilde{\rho}_0\|^{2/3}\no \\
& \leq \frac{1}{5\epsilon}\| B_N(\rho_0 -\tilde{\rho}_0) \|^2 + \frac{1}{4\epsilon} \|\v_0-\tilde{\v}_0\|^2 + \sigma(\epsilon).\label{DP83}
\end{align}
Similarly,
\begin{align*}
b_2(B_N \rho_0,\rho_0 &,A^{-1}(\v_0-\tilde{\v}_0) )-b_2( B_N \tilde{\rho}_0, \tilde{\rho}_0,A^{-1}(\v_0-\tilde{\v}_0) ) \\
&= b_2(B_N(\rho_0-\tilde{\rho}_0),\rho_0,A^{-1}(\v_0-\tilde{\v}_0)) + b_2(B_N\tilde{\rho}_0,\rho_0-\tilde{\rho}_0,A^{-1}(\v_0-\tilde{\v}_0)).
\end{align*}
Using Sobolev inequality and \eqref{DP48}, we estimate
\begin{align*}
\frac{1}{\epsilon}|b_2(B_N(\rho_0-\tilde{\rho}_0),\rho_0 &,A^{-1}(\v_0-\tilde{\v}_0))| \no \\
&\leq \frac{c}{\epsilon} \|B_N (\rho_0-\tilde{\rho}_0 )\| \| \nabla \rho_0 \|_{L^4} \| A^{-1}(\v_0-\tilde{\v}_0)\|_{L^4} \no \\
&\leq \frac{c}{\epsilon} \|B_N (\rho_0-\tilde{\rho}_0 )\| \|\nabla \rho_0\|^\frac{1}{2} \|B_N \rho_0\|^\frac{1}{2} \|\v_0 - \tilde{\v_0} \|_{\V'_{\mathrm{div}}} \no  \\
&\leq \frac{1}{5\epsilon} \|B_N (\rho_0-\tilde{\rho}_0) \|^2 + C \| \nabla \rho_0 \| \|B_N^{3/2} \rho_o\| \frac{\|\v_0-\tilde{\v}_0\|_{\V'_{\mathrm{div}}}^2}{\epsilon} \no \\
&\leq \frac{1}{5\epsilon} \|B_N (\rho_0-\tilde{\rho}_0) \|^2 + \delta \|B_N^{3/2} \rho_o\|^2 + \sigma(\epsilon; \delta) \no \\
&\leq \frac{1}{5\epsilon} \|B_N (\rho_0-\tilde{\rho}_0) \|^2 + \frac{m\delta}{4} e^{K_\beta (T-t_0)} (1 + \|\nabla \rho_0\|^2)^{m-1}\|B_N^{3/2} \rho_0\|^2 + \sigma(\epsilon; \delta) ,
\end{align*}
Now, observe that using integration by parts we get that
\begin{align*}
    \frac{1}{\epsilon} |b_2(B_N\tilde{\rho}_0,\rho_0-\tilde{\rho}_0,&A^{-1}(\v_0-\tilde{\v}_0))|  \no\\ 
    &\leq \frac{1}{\epsilon} |( (\rho_0 - \tilde{\rho_0}) B_N^{3/2} \tilde{\rho_0} , A^{-1}(\v_0-\tilde{\v}_0) )| + \frac{1}{\epsilon} | ( (\rho_0 - \tilde{\rho_0}) B_N \tilde{\rho_0} , A^{-1/2}(\v_0-\tilde{\v}_0) ) |,
\end{align*}
and we estimate right-hand side terms of the above as follows:
\begin{align*}
    \frac{1}{\epsilon} |( (\rho_0 - \tilde{\rho}_0) &B_N^{3/2} \tilde{\rho}_0 , A^{-1}(\v_0-\tilde{\v}_0) )| \no \\
    &\leq \frac{1}{\epsilon} \|\rho_0 - \tilde{\rho}_0 \|_{L^4} \|B_N^{3/2} \tilde{\rho}_0 \|  \|A^{-1}(\v_0-\tilde{\v}_0) \|_{L^4} \no \\
    &\leq \delta \|B_N^{3/2} \tilde{\rho_0} \|^2 + \frac{1}{4\delta \epsilon} \|\rho_0 - \tilde{\rho}_0 \| \| \nabla(\rho_0 - \tilde{\rho}_0) \| \frac{\|\v_0 - \tilde{\v}_0 \|^2_{\V_{div}}}{\epsilon} \no \\
    &\leq \delta \|B_N^{3/2} \tilde{\rho}_0 \|^2 + \frac{1}{8\delta} \left(\frac{\| \rho_0 - \tilde{\rho}_0\|^2}{\epsilon} + \frac{\|B_N (\rho_0 - \tilde{\rho}_0)\|^2}{\epsilon} \right) \frac{\|\v_0 - \tilde{\v}_0 \|^2_{\V_{div}}}{\epsilon} \no \\
    &\leq \frac{m\delta}{3} e^{K_\beta (T-s_0)} (1+\|\nabla \tilde{\rho}_0\|^2)^{m-1} \|B_N^{3/2} \tilde{\rho}_0 \|^2 + \sigma(\epsilon; \delta) + \sigma_2(\epsilon; \delta) \frac{\|B_N (\rho_0 - \tilde{\rho}_0)\|^2}{\epsilon},
\end{align*}
\begin{align*}
    \frac{1}{\epsilon} | (  (\rho_0 - \tilde{\rho_0}) B_N \tilde{\rho}_0 , A^{-1/2}(\v_0-\tilde{\v}_0) ) | 
    &\leq \frac{1}{\epsilon} \|\rho_0 - \tilde{\rho}_0\| \| B_N \tilde{\rho}_0\|_{L^4} \| A^{-1/2}(\v_0-\tilde{\v}_0)\|_{L^4} \no \\
    &\leq \frac{1}{\epsilon} \|\rho_0 - \tilde{\rho}_0\| \| B_N^{3/2}  \tilde{\rho}_0\| \|\v_0 - \tilde{\v}_0\|_{\V_{\mathrm{div}}}^\frac{1}{2} \| \v_0-\tilde{\v}_0\|^\frac{1}{2} \no \\
    &\leq \delta \| B_N^{3/2}  \tilde{\rho}_0\|^2 +  \frac{1}{4 \delta \epsilon^2} \|\rho_0 - \tilde{\rho}_0\|^2 \|\v_0 - \tilde{\v}_0\|_{\V_{\mathrm{div}}} \| \v_0-\tilde{\v}_0\| \no \\
    &\leq \delta \| B_N^{3/2}  \tilde{\rho}_0\|^2 + \frac{1}{8\delta} \frac{\|\rho_0 - \tilde{\rho}_0\|^2}{\epsilon} \left( \frac{\|\v_0 - \tilde{\v}_0\|_{\V_{\mathrm{div}}}^2}{\epsilon} + \frac{\| \v_0-\tilde{\v}_0\|^2}{\epsilon} \right) \no \\
    &\leq \frac{m\delta}{3} e^{K_\beta (T-s_0)} (1+\|\nabla \tilde{\rho}_0\|^2)^{m-1} \| B_N^{3/2}  \tilde{\rho}_0\|^2 + \sigma(\epsilon; \delta) + \frac{\| \v_0-\tilde{\v}_0\|^2}{\epsilon} \sigma_1(\epsilon;\delta),
\end{align*}
Hence, we have 
\begin{align}
\frac{1}{\epsilon} &|b_2(B_N\tilde{\rho}_0,\rho_0-\tilde{\rho}_0,A^{-1}(\v_0-\tilde{\v}_0))| \no \\
&\leq \frac{2m\delta}{3} e^{K_\beta (T-s_0)} (1+\|\nabla \tilde{\rho}_0\|^2)^{m-1} \| B_N^{3/2}  \tilde{\rho}_0\|^2 + \sigma(\epsilon; \delta) + \frac{\| \v_0-\tilde{\v}_0\|^2}{\epsilon} \sigma_1(\epsilon;\delta) + \sigma_2(\epsilon; \delta) \frac{\|B_N (\rho_0 - \tilde{\rho}_0)\|^2}{\epsilon},
\end{align}
Similarly, we also have
\begin{align*}
b(\v_0, \v_0, A^{-1}(\v_0-\tilde{\v}_0))-&b(\tilde{\v}_0,\tilde{\v}_0, A^{-1}(\v_0-\tilde{\v}_0)) \\
&=b(\v_0 - \tilde{\v}_0,\v_0, A^{-1}(\v_0-\tilde{\v}_0)) + b(\tilde{\v}_0,\v_0-\tilde{\v}_0,A^{-1}(\v_0-\tilde{\v}_0) ).
\end{align*}
Using H\"older's inequality and \eqref{DP48} we get
\begin{align}
\frac{1}{\epsilon} |b(\v_0 - \tilde{\v}_0,\v_0, A^{-1}(\v_0-\tilde{\v}_0))| &=\frac{1}{\epsilon} |b(\v_0 - \tilde{\v}_0, A^{-1}(\v_0-\tilde{\v}_0),\v_0)| \no \\
&\leq \frac{1}{\epsilon} \|\v_0 - \tilde{\v}_0\|  \| A^{-1/2}(\v_0-\tilde{\v}_0)\|^{1/2} \|\|\v_0-\tilde{\v}_0 \|^{1/2} \| \v_0\|^{1/2} \|\nabla \v_0\|^{1/2} \no \\
&\leq \frac{1}{4\epsilon} \|\v_0 - \tilde{\v}_0\| ^2 + \frac{C}{\epsilon} \| A^{-1/2}(\v_0-\tilde{\v}_0)\| \|\|\v_0-\tilde{\v}_0 \| \| \v_0\| \|\nabla \v_0\|\no \\
&\leq  \frac{1}{4\epsilon} \|\v_0 - \tilde{\v}_0\| ^2  + \delta \|\nabla \v_0\|^2 + C_\delta \frac{\| \v_0-\tilde{\v}_0\|_{\V'_{\mathrm{div}}}^2}{\epsilon} \frac{\|\v_0 - \tilde{\v}_0\|^2}{\epsilon} \no\\
&\leq \frac{1}{4\epsilon} \|\v_0 - \tilde{\v}_0\| ^2  + m\delta e^{K_\beta (T-t_0)} (1 + \|\v_0\|^2 )^{m-1}  \|\nabla \v_0\|^2 +  \sigma_1(\epsilon; \delta)\frac{\|\v_0 - \tilde{\v}_0\|^2}{\epsilon} \label{DP55}
\end{align}
Similarly,
\begin{align}
\frac{1}{\epsilon} |b(\tilde{\v}_0,\v_0-\tilde{\v}_0,A^{-1}(\v_0-\tilde{\v}_0))| &=\frac{1}{\epsilon} |b(\tilde{\v}_0,A^{-1}(\v_0-\tilde{\v}_0),A^{-1}(\v_0-\tilde{\v}_0))| \no \\
&\leq  \frac{1}{4\epsilon} \|\v_0 - \tilde{\v}_0\| ^2  + m\delta e^{K_\beta (T-t_0)} (1 + \|\tilde{\v}_0\|^2 )^{m-1}  \|\nabla \tilde{\v}_0\|^2 +  \sigma_1(\epsilon; \delta)\frac{\|\v_0 - \tilde{\v}_0\|^2}{\epsilon}
\end{align}
The terms involving $f $ can be estimated using \eqref{f condition2} as follows, 
% \begin{align}
% \frac{1}{\epsilon}| \langle A_N f(\rho_0)- A_N f(\tilde{\rho}_0), \rho_0- \tilde{\rho}_0 \rangle | &=\frac{1}{\epsilon} | \langle  f(\rho_0)-f(\tilde{\rho}_0), B_N(\rho_0- \tilde{\rho}_0)\rangle | \no  \\
% & \leq \frac{1}{\epsilon}C \| f (\rho_0)-f (\tilde{\rho}_0)\| \| B_N(\rho_0- \tilde{\rho}_0)\| \no \\
% & \leq \frac{1}{5\epsilon} \| B_N(\rho_0- \tilde{\rho}_0)\|^2 + C (1 + \|\rho_0\|_{H^2}^{2r}\| + \| \Tilde{\rho}_0\|_{H^2}^{2r}) \frac{\|\rho_0- \tilde{\rho}_0 \|^2}{\epsilon} \no \\
% & \leq \frac{1}{5\epsilon} \| B_N(\rho_0- \tilde{\rho}_0)\|^2 + \sigma(\epsilon),
% \end{align} 
For $0\leq \theta \leq 1$, using Gagliardo-Nirenberg inequality we have
\begin{align}
    \frac{1}{\epsilon}| \langle A_N f(\rho_0)- A_N f(\tilde{\rho}_0), \rho_0- \tilde{\rho}_0 \rangle | &=\frac{1}{\epsilon} | \langle  f(\rho_0)-f(\tilde{\rho}_0), B_N(\rho_0- \tilde{\rho}_0)\rangle | \no  \\
    &\leq \frac{C}{\epsilon} \|f'(\rho_0 + \theta \tilde{\rho}_0) \| \|\rho_0- \tilde{\rho}_0\|_{L^\infty} \|B_N(\rho_0- \tilde{\rho}_0) \| \no \\
    &\leq \frac{C}{\epsilon} (1+ \|\rho_0 + \theta \tilde{\rho}_0)\|_{L^{2r}}^r ) \|\rho_0- \tilde{\rho}_0\|^\frac{1}{2} \|B_N(\rho_0- \tilde{\rho}_0) \|^\frac{3}{2}  \no \\
    &\leq \frac{1}{5 \epsilon} \|B_N (\rho_0 - \tilde{\rho}_0) \|^2 + C(1 + \|\rho_0\|\|\nabla \rho_0\|^{(r-1)} + \|\tilde{\rho}_0\| \|\nabla \tilde{\rho}_0\|^{(r-1)} ) \frac{\|\rho_0 - \tilde{\rho}_0\|^2}{\epsilon} \no \\
    &\leq  \frac{1}{5 \epsilon} \|B_N (\rho_0 - \tilde{\rho}_0) \|^2 + \sigma(\epsilon, \delta),
\end{align}
% \begin{align}
% 2 \delta e^{K_\beta(T-s_0)} |\langle A_N f(\tilde{\rho_0}), A_N \tilde{\rho_0}\rangle | &= 2 \delta e^{K_\beta(T-s_0)} |\langle A_N^{1/2} f(\tilde{\rho_0}), B_N^{3/2} \tilde{\rho_0} \rangle | \no \\
% & \leq  2 \delta e^{K_\beta(T-s_0)} \|B_N^{3/2} \tilde{\rho}_0\| (1 + \|\tilde{\rho}_0 \|^r_{H^2}) \| \nabla \tilde{\rho}_0 \|  \no \\
% & \leq \delta e^{K_\beta(T-s_0)} \|B_N^{3/2} \tilde{\rho}_0\| ^2 + \sigma(\delta),
% \end{align}
where we used the triangle inequality and Lemma \ref{GNI} with $\theta=1-\frac{1}{r} $ in the second last inequality. Moreover,
\begin{align}
    2 \delta e^{K_\beta(T-s_0)} |\langle A_N f(\tilde{\rho}_0), B_N \tilde{\rho}_0\rangle | 
    &= 2 \delta e^{K_\beta(T-s_0)} |\langle  f'(\tilde{\rho_0}) \nabla \tilde{\rho}_0, B_N^{3/2} \tilde{\rho}_0 \rangle | \no \\
    & \leq  2 \delta e^{K_\beta(T-s_0)} C  |( (1+ |\tilde{\rho}_0|^r)  \nabla \tilde{\rho}_0 , B_N^{3/2} \tilde{\rho}_0 ) | \no \\
    & \leq 2 \delta e^{K_\beta(T-s_0)} C  (1+\| \tilde{\rho}_0\|_{L^{4r}}^r  ) \|\nabla \tilde{\rho}_0\|_{L^4} \|B_N^{3/2} \tilde{\rho}_0\| \no \\
    &\leq 2 \delta e^{K_\beta(T-s_0)} (1+\| \tilde{\rho}_0\|_{L^{4r}}^r  ) \|\nabla \tilde{\rho}_0\|^\frac{1}{2} \|B_N^{3/2} \tilde{\rho}_0\|^\frac{3}{2} \no \\
    &\leq \frac{m\delta}{3} e^{K_\beta (T-s_0)} (1+\|\nabla \tilde{\rho}_0\|^2)^{m-1}  \|B_N^{3/2} \tilde{\rho}_0\|^2 + \sigma(\delta)
\end{align}
and similarly,
\begin{align}
2 \delta e^{K_\beta(T-t_0)} |\langle A_N f(\rho_0), B_N \rho_0 \rangle | 
& \leq \frac{m\delta}{4} e^{K_\beta(T-t_0)} (1+\|\nabla \rho_0\|^2)^{m-1}\|B_N^{3/2} \rho_0\| ^2 + \sigma(\delta).  
\end{align}
It is also easy to observe the following estimate:
\begin{align}
    2m\delta &e^{K_\beta (T-t_0)}(1+\|\nabla \rho_0 \|^2 )^{m-1}  |\langle B_1(\v_0,\rho_0 ), B_N \rho_0 \rangle| \no \\ 
    &\leq 2Cm\delta e^{K_\beta (T-t_0)} \|\v_0\|_{L^4} \|\nabla \rho_0\| \|B_N \rho_0\|_{L^4} \no \\
    &\leq \frac{m\delta}{4} e^{K_\beta (T-s_0)} (1+\|\nabla \rho_0\|^2)^{m-1} |B_N^{3/2} \rho_0\|^2 + \frac{m\delta}{3} e^{K_\beta (T-s_0)} (1+\v_0\|^2)^{m-1} |\nabla \v_0\|^2 + \sigma(\delta),
\end{align}
and similarly,
\begin{align}
     2m\delta &e^{K_\beta (T-t_0)}(1+\|\nabla \tilde{\rho}_0 \|^2 )^{m-1}  |\langle B_1(\tilde{\v}_0,\tilde{\rho}_0 ), B_N \tilde{\rho}_0 \rangle| \no \\
     &\leq  \frac{m\delta}{3} e^{K_\beta (T-s_0)} (1+\|\nabla \tilde{\rho}_0\|^2)^{m-1} |B_N^{3/2} \tilde{\rho}_0\|^2 + \frac{m\delta}{3} e^{K_\beta (T-s_0)} (1+\tilde{\v}_0\|^2)^{m-1} |\nabla \tilde{\v}_0\|^2 + \sigma(\delta),
\end{align}
\begin{align}
     2m&\delta e^{K_\beta (T-t_0)} (1 + \|\v_0\|^2 )^{m-1} |\langle B_2(B_N \rho_0, \rho_0) , \v_0 \rangle | \no \\
  &\leq \frac{m\delta}{4} e^{K_\beta (T-s_0)} (1+\|\nabla \rho_0\|^2)^{m-1} |B_N^{3/2} \rho_0\|^2 + \frac{m\delta}{3} e^{K_\beta (T-s_0)} (1+\v_0\|^2)^{m-1} |\nabla \v_0\|^2 + \sigma(\delta),
  \end{align}
\begin{align}\label{DP56}
    2m&\delta e^{K_\beta (T-s_0)}(1+\|\tilde{\v}_0\|^2 )^{m-1} |\langle B_2(B_N \tilde{\rho}_0, \tilde{\rho}_0),\tilde{\v}_0\rangle | \no \\
    &\leq  \frac{m\delta}{3} e^{K_\beta (T-s_0)} (1+\|\nabla \tilde{\rho}_0\|^2)^{m-1} |B_N^{3/2} \tilde{\rho}_0\|^2 + \frac{m\delta}{3} e^{K_\beta (T-s_0)} (1+\tilde{\v}_0\|^2)^{m-1} |\nabla \tilde{\v}_0\|^2 + \sigma(\delta).
\end{align}
Substituting \eqref{DP83}-\eqref{DP56} in \eqref{DP109}, we get
\begin{align}
&\delta K_\beta e^{K_\beta (T-t_0)}[(1+\|\nabla \rho_0 \|^2)^m + (1+\|\v_0\|^2 )^m ]+ \delta K_\beta e^{K_\beta(T-s_0)}[(1+\|\nabla \tilde{\rho}_0\|^2)^m+(1+\|\tilde{\v}_0\|^2 )^m] \no \\
& + m\delta e^{K_\beta (T-t_0)}[(1+\|\nabla \rho_0 \|^2 )^{m-1} \|B_N^{3/2} \rho_0\|^2 + (1 + \|\v_0\|^2 )^{m-1}\|\nabla\v_0\|^2  ]  + \frac{1}{5\epsilon} \|B_N(\rho_0-\tilde{\rho}_0) \|^2\no  \\
&+ m\delta e^{K_\beta(T-s_0)} [(1+\|\nabla \tilde{\rho}_0\|^2)^{m-1} \|B_N^{3/2} \tilde{\rho}_0\|^2 
+(1+\|\tilde{\v}_0\|^2 )^{m-1} \|\nabla \tilde{\v}_0 \|^2] + \frac{1}{4\epsilon} \|\v_0 - \tilde{\v}_0\|^2 \no \\
& + H(s_0, \tilde{\rho}_0, \tilde{\v}_0, \frac{1}{\epsilon}( \rho_0 - \tilde{\rho}_0) - 2m\delta e^{K_\beta (T-s_0)}(1+\|\nabla \tilde{\rho}_0\|^2)^{m-1}B_N \tilde{\rho}_0 , \no \\
& \hspace{2cm} \frac{1}{\epsilon}A^{-1}(\v_0-\tilde{\v}_0) - 2m\delta e^{K_\beta(T-s_0)}(1+\|\tilde{\v}_0\|^2 )^{m-1}\tilde{\v}_0) \no \\
& -H(t_0, \rho_0, \v_0, \frac{1}{\epsilon}( \rho_0 - \tilde{\rho}_0) + 2m\delta e^{K_\beta (T-t_0)}(1+\|\nabla \rho_0 \|^2  )^{m-1} B_N \rho_0 , \no \\
&\hspace{2cm} \frac{1}{\epsilon} A^{-1}(\v_0-\tilde{\v}_0) + 2m\delta e^{K_\beta (T-t_0)}(1+ \|\v_0\|^2 )^{m-1} \v_0) \no \\
&\leq - \frac{2\beta }{T^2} + \sigma(\delta, \epsilon) + \sigma_1(\epsilon; \delta)\frac{\|\v_0 - \tilde{\v}_0\|^2}{\epsilon} + \sigma_2(\epsilon;\delta) \frac{\|B_N(\rho-\tilde{\rho})\|^2}{\epsilon}. \label{DP134}
\end{align}
{\bf{Step 3:}}  In this step we use assumptions on $H$ in  (\ref{DP134}), appropriately choose the modulus continuities, and show how one can arrive at a contradiction.
Let $C_\beta$ be a constant such that 
\begin{align}  \label{5.24}
\omega(s) \leq \frac{\beta}{8T^2} + C_\beta \, s.
\end{align} 
Using the assumptions \eqref{DP133}- \eqref{DP140} on Hamiltonian and \eqref{5.24}, we get
\begin{align*}
& \left| H\left(s_0, \tilde{\rho}_0, \tilde{\v}_0, \frac{1}{\epsilon}( \rho_0 - \tilde{\rho}_0) - 2m\delta e^{K_\beta (T-s_0)}(1+\|\nabla \tilde{\rho}_0\|^2)^{m-1}B_N \tilde{\rho}_0 ,  \right. \right. \no \\
& \hspace{0.2cm} \left. \left. \frac{1}{\epsilon}A^{-1}(\v_0-\tilde{\v}_0) - 2m\delta e^{K_\beta(T-s_0)}(1+\|\tilde{\v}_0\|^2 )^{m-1}\tilde{\v}_0\right) - H\left(s_0, \tilde{\rho}_0, \tilde{\v}_0,\frac{1}{\epsilon}( \rho_0 - \tilde{\rho}_0),\frac{1}{\epsilon}A^{-1}(\v_0-\tilde{\v}_0) \right) \right| \no \\
& + \left| H\left(t_0, \rho_0, \v_0, \frac{1}{\epsilon}( \rho_0 - \tilde{\rho}_0) + 2m\delta e^{K_\beta (T-t_0)}(1+\|\nabla \rho_0 \|^2  )^{m-1} B_N \rho_0 , \right. \right. \no \\
&\hspace{.2cm} \left. \left. \frac{1}{\epsilon} A^{-1}(\v_0-\tilde{\v}_0) + 2m\delta e^{K_\beta (T-t_0)}(1+ \|\v_0\|^2 )^{m-1} \v_0\right) -H\left(t_0, \rho_0, \v_0, \frac{1}{\epsilon}( \rho_0 - \tilde{\rho}_0) ,\frac{1}{\epsilon} A^{-1}(\v_0-\tilde{\v}_0) \right) \right| \\
& \leq \omega ((1+\|\nabla \tilde{\rho}_0\|) 2m\delta e^{K_\beta (T-s_0)}(1+\|\nabla \tilde{\rho}_0\|^2)^{m-1}\|B_N^{3/2} \tilde{\rho}_0 \|) \no \\
&  \hspace{1cm} +\omega ((1+ \|  \tilde{\v}_0\|) 2m\delta e^{K_\beta(T-s_0)}(1+\|\tilde{\v}_0\|^2 )^{m-1} \| \tilde{\v}_0 \|) \no \\
& \hspace{1cm} + \omega ( (1+\|\nabla \rho_0\|) 2m\delta e^{K_\beta (T-t_0)}(1+\|\nabla \rho_0 \|^2  )^{m-1} \|B_N^{3/2} \rho_0 \| ) \no \\
& \hspace{1cm} + \omega ( (1+\|\v_0\|)2m\delta e^{K_\beta (T-t_0)}(1+ \|\v_0\|^2 )^{m-1} \|\v_0\|  ) \no \\
&\leq \frac{\beta}{2T^2} + C_\beta(1+\|\nabla \tilde{\rho}_0\|) 2m\delta e^{K_\beta (T-s_0)}(1+\|\nabla \tilde{\rho}_0\|^2)^{m-1}\|B_N^{3/2} \tilde{\rho}_0 \| \no \\
& \hspace{1cm}+ C_\beta (1+ \|\tilde{\v}_0\|) 2m\delta e^{K_\beta(T-s_0)}(1+\|\tilde{\v}_0\|^2 )^{m-1} \|\tilde{\v}_0 \| \no \\
& \hspace{1cm}+ C_\beta (1+\|\nabla \rho_0\|) 2m\delta e^{K_\beta (T-t_0)}(1+\|\nabla \rho_0 \|^2  )^{m-1} \|B_N^{3/2} \rho_0 \| \no \\
& \hspace{1cm}+  C_\beta (1+\|\v_0\|)2m\delta e^{K_\beta (T-t_0)}(1+ \|\v_0\|^2 )^{m-1} \|\v_0\| \no \\
& \leq \frac{\beta}{2T^2} + 2m\delta e^{K_\beta (T-s_0)} C_\beta ^2 (1+\|\nabla \tilde{\rho}_0\|^2)^m + m\delta e^{K_\beta (T-s_0)}  (1+\|\nabla \tilde{\rho}_0\|^2)^{m-1} \|B_N^{3/2} \tilde{\rho}_0 \|^2 \no \\
& \hspace{1cm} + 2m\delta e^{K_\beta(T-s_0)} C_\beta^2 (1+ \|\tilde{\v}_0\|^2)^m + m\delta e^{K_\beta(T-s_0)} (1+ \|\tilde{\v}_0\|^2)^{m-1} \|\nabla \tilde{\v}_0\|^2 \no \\
& \hspace{1cm} + 2m\delta e^{K_\beta(T-t_0)} C_\beta^2 (1+\|\nabla \rho_0\|^2)^m  + m\delta e^{K_\beta(T-t_0)} (1+\|\nabla \rho_0\|^2)^{m-1} \|B_N^{3/2} \rho_0 \|^2 \no \\
& \hspace{1cm} + 2m\delta e^{K_\beta(T-t_0)} C_\beta^2 (1+ \|\v_0\|^2)^m + m\delta e^{K_\beta(T-t_0)} (1+ \|\v_0\|^2)^{m-1} \|\nabla \v_0\|^2.
\end{align*}
where we used Poincare's inequality for $\v_0, \tilde{\v}_0$ in the last inequality.   Choosing $K_\beta = 1+4 m C_\beta^2$, and Using above estimate in \eqref{DP134} we have
\begin{align}
&\frac{\delta}{2} K_\beta e^{K_\beta (T-t_0)} [ (1+\|\nabla \rho_0\|^2)^{m}+(1+ \|\v_0\|^2)^m] + \frac{\delta}{2} K_\beta e^{K_\beta(T-s_0)} [ (1+\|\nabla \tilde{\rho}_0\|^2)^{m}+(1+ \|\tilde{\v}_0\|^2)^m] \no \\
& +m\delta e^{K_\beta (T-t_0)}[(1+\|\nabla \rho_0 \|^2 )^{m-1} \|B_N^{3/2} \rho_0\|^2 ]  + \frac{1}{5\epsilon} \|B_N(\rho_0-\tilde{\rho}_0) \|^2\no  \\
&+ m\delta e^{K_\beta(T-s_0)} [(1+\|\nabla \tilde{\rho}_0\|^2)^{m-1} \|B_N^{3/2} \tilde{\rho}_0\|^2 
] + \frac{1}{4\epsilon} \|\v_0 - \tilde{\v}_0\|^2 \no \\
& + H\left( s_0, \tilde{\rho}_0, \tilde{\v}_0, \frac{1}{\epsilon}( \rho_0 - \tilde{\rho}_0) , \frac{1}{\epsilon}A^{-1}(\v_0-\tilde{\v}_0) \right)-H\left(t_0, \rho_0, \v_0, \frac{1}{\epsilon}( \rho_0 - \tilde{\rho}_0) , \frac{1}{\epsilon} A^{-1}(\v_0-\tilde{\v}_0) \right)\no  \\ 
&\leq - \frac{\beta }{T^2} + \sigma(\delta, \epsilon) + \sigma_1(\epsilon; \delta)\frac{\|\v_0 - \tilde{\v}_0\|^2}{\epsilon} + \sigma_2(\epsilon;\delta) \frac{\|B_N(\rho-\tilde{\rho})\|^2}{\epsilon}. \label{DP136}
\end{align}
For fixed $ \beta  $ and $ \delta $ we know that there exists $ R_\delta>0 $ such that $ \|  \rho_0\|_{D(B^{1/2}_N)}, \|  \tilde{\rho} \|_{D(B^{1/2}_N)}, \| \v_0 \|, \|\tilde{\v}\| \leq R_\delta $.
Let $ D_{\beta, \delta} $ be such that 
\begin{align}
\omega_{R_\delta} (s) \leq \frac{\beta}{4T^2} + D_{\beta, \delta} \, s .
\end{align}
Then we have
\begin{align}
&\left| H\left(s_0, \tilde{\rho}_0, \tilde{\v}_0, \frac{1}{\epsilon}( \rho_0 - \tilde{\rho}_0) , \frac{1}{\epsilon}A^{-1}(\v_0-\tilde{\v}_0) \right)-H\left(t_0, \rho_0, \v_0, \frac{1}{\epsilon}( \rho_0 - \tilde{\rho}_0) , \frac{1}{\epsilon} A^{-1}(\v_0-\tilde{\v}_0) \right) \right|\no \\
& \leq \omega_{R_\delta} (s_0 - t_0) + \omega_{R_\delta} (\|\nabla (\rho_0 - \tilde{\rho}_0) \| + \| \v_0-\tilde{\v}_0 \|) \no \\
& \hspace{1cm}+ \omega \left(\frac{1}{\epsilon}\|\nabla (\rho_0 - \tilde{\rho}_0) \| \,\|\nabla (\rho_0 - \tilde{\rho}_0) \| + \frac{1}{\epsilon}\| \v_0-\tilde{\v}_0 \|\|A^{-1}(\v_0-\tilde{\v}_0) \| \right)\no \\
& \leq  \frac{\beta}{4T^2}+\sigma (\gamma) + \frac{\beta}{4T^2} + D_{\beta, \delta} (\|\nabla (\rho_0 - \tilde{\rho}_0) \| + \| \v_0-\tilde{\v}_0 \| ) \no \\
& \hspace{1.4cm}+ \frac{\beta}{4T^2}  + C_\beta \left(\frac{1}{\epsilon}\|\nabla (\rho_0 - \tilde{\rho}_0) \|^2 + \frac{1}{\epsilon} \|\v_0 - \tilde{\v}_0\| \|A^{-1}(\v_0 - \tilde{\v}_0)\| \right)   \no \\
& \leq \sigma (\gamma) + \frac{3\beta}{4T^2} + D_{\beta, \delta} (\|\nabla (\rho_0 - \tilde{\rho}_0) \| + \| \v_0-\tilde{\v}_0 \| ) + \frac{C_\beta}{\epsilon} \|\nabla (\rho_0 - \tilde{\rho}_0) \|^2 + \frac{C_\beta^2}{\epsilon} \|\v_0-\tilde{\v}_0  \|^2 _{\V_{\mathrm{div}}'} + \frac{1}{\epsilon} \| \v_0-\tilde{\v}_0 \|^2 \no \\
& \leq \sigma (\gamma, \epsilon) + \frac{3\beta}{4T^2} + D_{\beta, \delta} (\|\nabla (\rho_0 - \tilde{\rho}_0) \| + \| \v_0-\tilde{\v}_0 \| + \frac{C_{\beta}}{\epsilon} \|\nabla (\rho_0 - \tilde{\rho}_0) \|^2)+ \frac{1}{\epsilon} \| \v_0-\tilde{\v}_0 \|^2 \no \\
& \leq \sigma (\gamma, \epsilon) + \frac{3\beta}{4T^2} + D_{\beta, \delta} (\|B_N (\rho_0 - \tilde{\rho}_0) \| + \| \v_0-\tilde{\v}_0 \|) + \frac{C_{\beta}}{\epsilon} \|B_N (\rho_0 - \tilde{\rho}_0) \|^2+ \frac{1}{\epsilon} \| \v_0-\tilde{\v}_0 \|^2 \label{DP135} 
\end{align}
where we used Poincare's inequality in the last step above.
Substituting \eqref{DP135} in \eqref{DP136}, we get
\begin{align}
\left(\frac{1}{5}-C_\beta -\sigma_2(\epsilon;\delta)\right) \frac{\| B_N (\rho_0 - \tilde{\rho}_0) \|^2}{\epsilon} - D_{\beta, \delta} \| B_N (\rho_0 - \tilde{\rho}_0) \| + \left(\frac{1}{4} - \sigma_1(\epsilon;\delta ) \right)\frac{\| \v_0-\tilde{\v}_0 \|^2}{\epsilon} -D_{\beta, \delta}\| \v_0-\tilde{\v}_0 \| \no \\
\leq - \frac{\beta}{4T^2} + \sigma(\gamma, \epsilon, \delta  ). \label{DP137}
\end{align}
Choose $C_\beta$ such that $\frac{1}{5}-C_\beta -\sigma_2(\epsilon;\delta) > 0$ to observe that
\begin{align*}
\lim_{\epsilon \rightarrow 0} \inf_{r>0} \left( \frac{(\frac{1}{5}-C_\beta - \sigma_2(\epsilon; \delta))r^2}{\epsilon} - D_{\beta, \delta} \, r \right) = 0 
\end{align*}
and choosing $\frac{1}{4} - \sigma_1(\epsilon;\delta) > \frac{1}{8}$ we have
\begin{align*}
\lim_{\epsilon \rightarrow 0} \inf_{r>0} \left( \frac{r^2}{8\epsilon} - D_{\beta, \delta}\, r \right) = 0
\end{align*}
If we send $\gamma \rightarrow 0, \epsilon \rightarrow 0$ and $\delta \rightarrow 0$ in \eqref{DP137}, LHS  goes to zero which gives a contradiction. 
\end{proof}

\subsection*{Acknowledgements} 
The authors would like to thank Dr. Manil T Mohan from IIT Roorkee, INDIA, for useful discussions. The authors would also like to thank Prof. Andrzej Swiech from Georgia Institute of Technology, USA, and the anonymous referee for their valuable comments and suggestions.

\bibliographystyle{plain}
\bibliography{ref_DPP}

\end{document}